\documentclass{siamltex}

\usepackage{latexsym,amssymb,amsmath,enumerate,bbm,graphicx,psfrag,color}
\usepackage[latin1]{inputenc}

\setlength{\parskip}{1ex plus0.5ex minus0.2ex}
\setlength{\parindent}{0cm}

\def\norm#1{\hspace{0.2ex} \|#1\| \hspace{0.2ex}} 

\newcommand{\labeq}[1]{\label{eq:#1}}			
\def\req#1{{\rm(\ref{eq:#1})}}

\usepackage[ocgcolorlinks, linkcolor=blue]{hyperref}

\newcommand{\R}{\ensuremath{\mathbbm{R}}} 
\newcommand{\C}{\ensuremath{\mathbbm{C}}} 
\newcommand{\N}{\ensuremath{\mathbbm{N}}} 
\newcommand{\range}{\mathcal R}
\newcommand{\kernel}{\mathcal N}
\newcommand{\dx}[1][x]{\ensuremath{\,{\rm{d}} #1}}

\newcommand{\kommentar}[1]{}


\renewcommand{\Re}{\operatorname{Re}}
\newcommand{\spn}{\operatorname{span}}

\newtheorem{example}[theorem]{Example}
\newtheorem{remark}[theorem]{Remark}

\begin{document}

\title{Monotonicity and local uniqueness for the Helmholtz equation}
       
\author{Bastian Harrach\footnotemark[2], Valter Pohjola\footnotemark[3], and Mikko Salo\footnotemark[4]}
\renewcommand{\thefootnote}{\fnsymbol{footnote}}

\maketitle

\footnotetext[2]{Institute for Mathematics, Goethe-University Frankfurt, Frankfurt am Main, 
Germany (\url{harrach@math.uni-frankfurt.de})}

\footnotetext[3]{Research Unit of Mathematical Sciences, University of Oulu, Oulu, 
Finland (\url{valter.pohjola@gmail.com})}

\footnotetext[4]{Department of Mathematics and Statistics, University of Jyväskylä, Jyväskylä, 
Finland (\url{mikko.j.salo@jyu.fi})}

\let\thefootnote\relax\footnotetext{\hrule \vspace{1ex} \centering First published in \emph{Analysis \& PDE} in volume \textbf{12}(7), 1741--1771, 2019,
published by Mathematical Sciences Publishers (\url{https://doi.org/10.2140/apde.2019.12.1741}).
}

\pagestyle{myheadings}
\thispagestyle{plain}
\markboth{Bastian Harrach, Valter Pohjola, and Mikko Salo}{Monotonicity and local uniqueness for the Helmholtz equation}


\begin{abstract}
This work extends monotonicity-based methods in inverse problems to the case of the Helmholtz (or stationary Schr\"odinger) equation $(\Delta + k^2 q) u = 0$ in a bounded domain for fixed
non-resonance frequency $k>0$ and real-valued scattering coefficient function $q$. 
We show a monotonicity relation between the scattering coefficient $q$ and the local Neumann-Dirichlet operator 
that holds up to finitely many eigenvalues. Combining this with the method of localized potentials, or Runge approximation, adapted to the case where finitely many constraints are present, we derive a constructive monotonicity-based characterization of scatterers from partial boundary data. We also obtain the local uniqueness result that two coefficient functions $q_1$ and $q_2$ can be distinguished by partial boundary data if there is a neighborhood of the boundary part where $q_1\geq q_2$ and $q_1\not\equiv q_2$. 
\end{abstract}

\begin{keywords}
Inverse Coefficient Problems, Helmholtz equation, stationary Schr\"odinger equation, monotonicity, localized potentials
\end{keywords}

\begin{AMS}
35R30 
35J05 
\end{AMS}

\section{Introduction}
\label{Sec:intro}

Let $\Omega\subseteq \R^n$, $n\geq 2$, be a bounded Lipschitz domain with unit outer normal $\nu$. 
For a fixed non-resonance frequency $k>0$, we study the relation between a real-valued scattering coefficient function $q\in L^\infty(\Omega)$ in the Helmholtz equation (or time independent Schr\"odinger equation)
\begin{equation}\labeq{Intro:Helmholtz}
(\Delta + k^2 q) u = 0 \quad \text{ in } \Omega
\end{equation}
and the local (or partial) Neumann-to-Dirichlet (NtD) operator
\[
\Lambda(q):\ L^2(\Sigma)\to L^2(\Sigma), \quad g\mapsto u|_\Sigma,
\]
where $u\in H^1(\Omega)$ solves \req{Intro:Helmholtz} with Neumann data  
\[
\partial_\nu u|_{\partial \Omega}=\left\{ \begin{array}{l l} g & \text{ on } \Sigma,\\ 0 & \text{ else.}\end{array}\right.
\]
Here $\Sigma\subseteq \partial \Omega$ is assumed to be an arbitrary non-empty
relatively open subset of $\partial \Omega$. Since $k$ is a non-resonance
frequency, $\Lambda(q)$ is well defined and is easily shown to be a
self-adjoint compact operator.

We will show that
\[
q_1\leq q_2\quad \text{ implies } \quad \Lambda(q_1)\leq_\text{fin} \Lambda(q_2),
\]
where the inequality on the left hand side is to be understood pointwise almost everywhere, and
the right hand side denotes that $\Lambda(q_2)-\Lambda(q_1)$ possesses only finitely many negative eigenvalues.
Based on a slightly stronger quantitative version of this monotonicity relation, and an extension of the technique of localized potentials \cite{gebauer2008localized} to spaces with finite codimension, we deduce the following local uniqueness result for determining $q$ from $\Lambda(q)$.

\begin{theorem}\label{intro:thm:local_uniqueness}
Let $O\subseteq \overline\Omega$ be a connected relatively open set with $O\cap \Sigma\neq \emptyset$ and
$q_1\leq q_2$ on $O$. Then 
\[
\Lambda(q_1) = \Lambda(q_2) \quad \text{ implies } \quad q_1 = q_2 \text{ in $O$}.
\]
Moreover, if $q_1|_O \not \equiv q_2|_O$, then $\Lambda(q_2)-\Lambda(q_1)$ has infinitely many positive eigenvalues.
\end{theorem}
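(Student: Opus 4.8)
The plan is to prove the second assertion first and then obtain the first one by contraposition. Indeed, since $q_1\le q_2$ on $O$, the condition $q_1\ne q_2$ in $O$ is the same as $q_1|_O\not\equiv q_2|_O$, and if the second assertion holds then $\Lambda(q_2)-\Lambda(q_1)$ has infinitely many positive eigenvalues, so in particular $\Lambda(q_2)-\Lambda(q_1)\ne 0$; this is exactly the contrapositive of ``$\Lambda(q_1)=\Lambda(q_2)\Rightarrow q_1=q_2$ in $O$''. For the second assertion I would set $A:=\Lambda(q_2)-\Lambda(q_1)$, which is self-adjoint and compact, and use the elementary spectral fact that such an operator has infinitely many positive eigenvalues if and only if for every finite-dimensional subspace $V\subseteq L^2(\Sigma)$ there exists $g\in V^\perp$ with $\langle Ag,g\rangle>0$. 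Thus everything reduces to: given an arbitrary finite-dimensional $V$, produce $g\in V^\perp$ with $\langle Ag,g\rangle>0$.

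To do this I would combine the quantitative monotonicity relation with localized potentials. Writing $u_j=u_j^{(g)}$ for the solution of $(\Delta+k^2q_j)u_j=0$ with the prescribed Neumann data, the bilinear identity underlying the monotonicity relation reads
\[
\langle(\Lambda(q_2)-\Lambda(q_1))g,g\rangle=\int_\Omega k^2(q_2-q_1)|u_1|^2\dx+B_{q_2}(u_1-u_2,u_1-u_2),
\]
where $B_{q_2}(v,v)=\int_\Omega(|\nabla v|^2-k^2q_2|v|^2)\dx$. Since $k^2q_2\in L^\infty(\Omega)$ is a form-compact perturbation of the Dirichlet form, $B_{q_2}$ is positive definite on a subspace of finite codimension, which is precisely the source of the ``up to finitely many eigenvalues'' in the monotonicity relation. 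Concretely, I would invoke the quantitative version of the relation to obtain a fixed finite-dimensional subspace $V_0\subseteq L^2(\Sigma)$ with
\[
\langle(\Lambda(q_2)-\Lambda(q_1))g,g\rangle\ \ge\ \int_\Omega k^2(q_2-q_1)|u_1^{(g)}|^2\dx\qquad\text{for all }g\in V_0^\perp .
\]

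It then remains to make the right-hand side strictly positive for some $g\in(V+V_0)^\perp$. Because the sign of $q_2-q_1$ is controlled only on $O$, I would split $\int_\Omega=\int_O+\int_{\Omega\setminus O}$: the integral over $O$ is nonnegative and is to be blown up, while the integral over $\Omega\setminus O$ must be kept bounded. Since $O$ is connected and $O\cap\Sigma\ne\emptyset$, the data region $\Sigma$ ``reaches'' the positive-measure set $\{q_2>q_1\}\subseteq O$ through $O$, and I would apply the localized potentials / Runge approximation technique in the finite-codimension space $(V+V_0)^\perp$ to construct $g_m\in(V+V_0)^\perp$ whose solutions satisfy
\[
\int_O k^2(q_2-q_1)|u_1^{(g_m)}|^2\dx\longrightarrow\infty,\qquad \int_{\Omega\setminus O}|u_1^{(g_m)}|^2\dx\ \text{bounded}.
\]
Then $\int_\Omega k^2(q_2-q_1)|u_1^{(g_m)}|^2\dx\to\infty$, so for large $m$ the monotonicity estimate gives $\langle Ag_m,g_m\rangle>0$ with $g_m\in V^\perp$, as the spectral criterion requires.

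The hard part is the construction of these localized potentials under two simultaneous constraints: the solutions must concentrate their $L^2$-mass in $\{q_2>q_1\}\cap O$ while staying bounded on $\Omega\setminus O$, which forces one to exploit the connectedness of $O$ and the contact $O\cap\Sigma\ne\emptyset$ together with unique continuation to propagate the concentration from $\Sigma$ into $O$; and the data must at the same time lie in the finite-codimension space $(V+V_0)^\perp$. This finite-codimension constraint is exactly the new ingredient forced by the fact that in the Helmholtz setting monotonicity holds only modulo finitely many eigenvalues, and I expect the technically delicate point to be verifying that imposing finitely many linear constraints on the Neumann data does not destroy the energy concentration.
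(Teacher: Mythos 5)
Your proposal is correct and follows essentially the same route as the paper's proof (theorem \ref{thm:local}): the quantitative monotonicity relation of theorem \ref{thm:monotonicity} giving a fixed finite-dimensional exceptional subspace $V_0$, the splitting $\int_\Omega = \int_O + \int_{\Omega\setminus O}$ with $q_2-q_1\geq c>0$ on a positive-measure set $B\subseteq O$, and the localized potentials of theorem \ref{thm:localized_potentials} (with $D=\overline\Omega\setminus O$) in the finite-codimension space $(V+V_0)^\perp$ to blow up $\int_B|u_1^{(g)}|^2$ while killing the contribution from $\Omega\setminus O$. Your direct spectral criterion (``for every finite-dimensional $V$ there is $g\in V^\perp$ with $\langle Ag,g\rangle>0$'') is just the contrapositive packaging, via lemma \ref{lemma_eigenvalues_quadform}, of the paper's contradiction argument assuming $\Lambda(q_2)-\Lambda(q_1)\leq_{\mathrm{fin}}0$, and the ``hard part'' you correctly identify is exactly what the paper supplies in section \ref{sect:loc_pot}.
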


Theorem~\ref{intro:thm:local_uniqueness} will be proven in section
\ref{Sec:Local}. Note that this result removes the assumption $q_1,q_2\in L^\infty_+(\Omega)$
from the local uniqueness result in \cite{harrach2017local}, and that it implies global uniqueness if
$q_1-q_2$ is piecewise-analytic, cf.\ corollary \ref{cor:pcw_anal_uniqueness}.
Note also that in dimension $n=2$, Imanuvilov, Uhlmann and Yamamoto \cite{imanuvilov2015neumann} have proven global uniqueness 
with partial boundary data for potentials $q\in W^{1,p}(\Omega)$, $p>2$. Compared to the result in \cite{imanuvilov2015neumann}, theorem~\ref{intro:thm:local_uniqueness} is both, less restrictive as it holds for $L^\infty$-potentials and any dimension $n\geq 2$,
and more restrictive as it relies on a local definiteness condition that is not required in \cite{imanuvilov2015neumann}.

Additionally to theorem \ref{intro:thm:local_uniqueness}, we will also derive a constructive monotonicity-based method to detect a
scatterer in an otherwise homogeneous domain. Let the scatterer $D\subseteq
\Omega$ be an open set such that $\overline D\subseteq \Omega$ and the
complement $\Omega\setminus \overline D$ is connected, and let
\begin{alignat*}{2}
q(x)&=1 \quad && \text{ for } x\in \Omega\setminus D \text{ (a.e.)}, \text{ and }\\
1<  q_\text{min}\leq  q(x)& \leq   q_\text{max} \quad && \text{ for } x\in D \text{ (a.e.)},
\end{alignat*}
with constants $q_\text{min},q_\text{max}>0$. For an open set $B\subseteq \Omega$, we define the self-adjoint compact operator
\[
T_B:\ L^2(\Sigma)\to L^2(\Sigma), \quad
\int_{\Sigma} g T_B h\dx[s]:=\int_B k^2 u^{(g)}_1 u^{(h)}_1\dx.
\]
where $u^{(g)}_1,u^{(h)}_1\in H^1(\Omega)$ solve \req{Intro:Helmholtz} with $q\equiv 1$ and Neumann data $g$, resp. $h$.

\begin{theorem}\label{intro:thm:main1} For all $0<\alpha\leq q_\text{min}-1$,
\[
B\subseteq D \quad \text{ if and only if } \quad \alpha T_B \leq_{\mathrm{fin}} \Lambda(q)-\Lambda(1).
\]
\end{theorem}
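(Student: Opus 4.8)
The proof rests on the polarized monotonicity identity
\[
\langle g,(\Lambda(q)-\Lambda(1))g\rangle_{L^2(\Sigma)} = \int_\Omega k^2(q-1)\,u_1^{(g)}u_q^{(g)}\dx ,
\]
where $u_1^{(g)}$ and $u_q^{(g)}$ solve \req{Intro:Helmholtz} with coefficients $1$ and $q$ and common Neumann data $g$; this follows by testing the two weak formulations against each other. Since $q-1$ is supported in $D$, the difference $w^{(g)}:=u_q^{(g)}-u_1^{(g)}$ solves $(\Delta+k^2q)w^{(g)}=-k^2(q-1)u_1^{(g)}\chi_D$ with homogeneous Neumann data, so by well-posedness at the non-resonance frequency $k$ one has $\Norm{w^{(g)}}_{H^1(\Omega)}\le C\Norm{u_1^{(g)}}_{L^2(D)}$, whence $\Norm{u_q^{(g)}}_{L^2(D)}\le C'\Norm{u_1^{(g)}}_{L^2(D)}$ with $C'$ independent of $g$. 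Recalling $\langle g,T_Bg\rangle=\int_Bk^2|u_1^{(g)}|^2\dx$, the whole argument reduces to comparing $\int_\Omega k^2(q-1)u_1^{(g)}u_q^{(g)}$ with $\alpha\int_Bk^2|u_1^{(g)}|^2$, and I will use throughout the standard fact that a self-adjoint compact operator has only finitely many negative eigenvalues precisely when its quadratic form is nonnegative on a subspace of finite codimension.

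For the implication $B\subseteq D\Rightarrow \alpha T_B\leq_{\mathrm{fin}}\Lambda(q)-\Lambda(1)$ I would invoke the quantitative monotonicity estimate established earlier, in its lower form $\langle g,(\Lambda(q)-\Lambda(1))g\rangle\ge\int_\Omega k^2(q-1)|u_1^{(g)}|^2\dx$ valid up to finitely many eigenvalues. Because $q=1$ off $D$ and $q-1\ge q_{\text{min}}-1\ge\alpha$ on $D\supseteq B$, the pointwise chain
\[
\int_\Omega k^2(q-1)|u_1^{(g)}|^2 = \int_D k^2(q-1)|u_1^{(g)}|^2 \ge \int_B k^2(q-1)|u_1^{(g)}|^2 \ge \alpha\int_B k^2|u_1^{(g)}|^2
\]
holds for every $g$. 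Combining the two displays gives $\langle g,(\Lambda(q)-\Lambda(1)-\alpha T_B)g\rangle\ge0$ on a subspace of finite codimension, which is exactly $\alpha T_B\leq_{\mathrm{fin}}\Lambda(q)-\Lambda(1)$.

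For the converse I prove the contrapositive: if $B\not\subseteq D$ then $\Lambda(q)-\Lambda(1)-\alpha T_B$ has infinitely many negative eigenvalues. From the identity together with the $L^2(D)$-bound above, Cauchy--Schwarz yields the one-sided estimate $\langle g,(\Lambda(q)-\Lambda(1))g\rangle \le C_1\int_D k^2|u_1^{(g)}|^2\dx$ for all $g$, with $C_1$ depending only on $k,q_{\text{max}},C'$. Since $B\not\subseteq D$, there is a nonempty open $B'\subseteq B$ with $\overline{B'}\subseteq \Omega\setminus\overline D$, on which $q\equiv1$, and hence
\[
\langle g,(\Lambda(q)-\Lambda(1)-\alpha T_B)g\rangle \le C_1\int_D k^2|u_1^{(g)}|^2\dx-\alpha\int_{B'}k^2|u_1^{(g)}|^2\dx .
\]
The plan is to make the right-hand side negative on an infinite-dimensional set of $g$ via localized potentials: since $\Omega\setminus\overline D$ is connected, meets $\Sigma$, and $\overline{B'}$ is disjoint from $\overline D$, Runge approximation produces Neumann data whose background solutions carry arbitrarily large $L^2$-energy on $B'$ while keeping their $L^2$-energy on $D$ bounded. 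To upgrade ``not $\geq0$'' to ``infinitely many negative eigenvalues'' I argue by contradiction: were there only finitely many, the form would be nonnegative on some finite-codimension subspace $\mathcal V$, and I would apply the finite-codimension version of the localized-potentials construction to produce such concentrating data inside $\mathcal V$, contradicting the last display.

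The main obstacle is exactly this last step: the ordinary localized-potentials/Runge-approximation argument must be carried out while staying inside a prescribed closed subspace of finite codimension of $L^2(\Sigma)$, i.e.\ under finitely many linear constraints on the admissible Neumann data. This is the technical heart of the paper, provided by the announced extension of \cite{gebauer2008localized} to finite codimension; granting it, the contradiction closes and both implications follow. A secondary point requiring care is the bookkeeping of the ``up to finitely many eigenvalues'' corrections, which I handle uniformly through the equivalence between $\leq_{\mathrm{fin}}$ and nonnegativity of the quadratic form on a finite-codimension subspace, combined with the min--max principle.
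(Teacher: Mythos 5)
Your proposal is correct, and its overall skeleton matches the paper's proof of theorem \ref{thm:main1}: the ``if'' direction is exactly the paper's argument (the monotonicity inequality of theorem \ref{thm:monotonicity} on a finite-codimension subspace, the pointwise bound $q-1\geq \alpha\chi_B$ for $B\subseteq D$, and the characterization of $\leq_{\mathrm{fin}}$ via nonnegativity of the quadratic form on a finite-codimension subspace, corollary \ref{cor_order_characterization}), and the ``only if'' direction is the same contradiction scheme hinging on the finite-codimension localized potentials of theorem \ref{thm:localized_potentials}, which you correctly identify as the technical heart and may legitimately cite, since the paper establishes it independently in section \ref{sect:loc_pot}. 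Where you genuinely diverge is in the upper energy bound for the converse. The paper obtains $\int_\Sigma g(\Lambda(q)-\Lambda(1))g\,\dx[s]\leq C\int_D k^2(q-1)|u_1^{(g)}|^2\dx$ only on a finite-codimension subspace, by combining remark \ref{remark:othermonotonicity} (monotonicity with $u_q^{(g)}$ on the right, valid up to $d(1)$ eigenvalues) with theorem \ref{thm:useful}, whose proof runs through the range identity $\range(L_{q,D}^*)=\range(L_{1,D}^*)$ and the functional-analytic lemma \ref{lemma:fundamental}. You instead use the exact polarization identity $\int_\Sigma g(\Lambda(q)-\Lambda(1))g\,\dx[s]=\int_D k^2(q-1)\,u_1^{(g)}u_q^{(g)}\dx$ (correct: test each weak formulation against the other solution; $q-1$ vanishes off $D$), the direct perturbation estimate $\Norm{u_q^{(g)}-u_1^{(g)}}_{H^1(\Omega)}\leq C\Norm{u_1^{(g)}}_{L^2(D)}$ from well-posedness at the non-resonance frequency, and Cauchy--Schwarz. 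This is more elementary --- it bypasses theorem \ref{thm:useful} and remark \ref{remark:othermonotonicity} entirely --- and it yields the bound for \emph{all} $g\in L^2(\Sigma)$ with no finite-dimensional exceptional space, so only two $\leq_{\mathrm{fin}}$ relations need to be chained via lemma \ref{lemma:order_transitive} instead of three; what the paper's route buys in exchange is the two-sided comparison of theorem \ref{thm:useful}, which it reuses in theorem \ref{thm:main2}. One shared caveat, not a gap relative to the paper: your construction of the open ball $B'$ with $\overline{B'}\subseteq\Omega\setminus\overline{D}$, like the paper's appeal to theorem \ref{thm:localized_potentials}, implicitly uses that $B\not\subseteq D$ forces $B\setminus\overline{D}$ to be nonempty (equivalently, of positive measure), which holds automatically when, e.g., $D$ coincides with the interior of its closure.
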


We will also give a bound on the number of negative eigenvalues in the case $B\subseteq D$, and prove a similar result 
for scatterers with negative contrast in section \ref{Sec:Detection}.

Let us give some references on related works and comment on the origins and relevance of our
result. The inverse problem considered in this work is closely related to
the inverse conductivity problem of determining the positive conductivity function $\gamma$ in 
the equation $\nabla \cdot (\gamma \nabla u) = 0$ in a bounded domain
in $\R^n$ from knowledge of the associated Neumann-Dirichlet operator. 
This is also known as the problem of Electrical Impedance Tomography or the
Calder\'on Problem \cite{calderon1980inverse,calderon2006inverse}.
For a short
list of seminal contributions for full boundary data let us refer to
\cite{kohn1984determining,kohn1985determining,druskin1998uniqueness,sylvester1987global,nachman1996global,astala2006calderon,haberman2013uniqueness,caro2016global}.
For the uniqueness problem with partial boundary data there are rather
precise results if $n=2$ (see \cite{IUY,imanuvilov2015neumann} and the survey \cite{GT_survey}), but
in dimensions $n \geq 3$ it is an open question whether measurements on an
arbitrary open set $\Sigma \subseteq \partial \Omega$ suffice to determine the
unknown coefficient. We refer to \cite{KSU, Isakov, KS, krupchyk2016calderon}
and the overview article \cite{kenig2014recent} for known results, which either
impose strong geometric restrictions on the inaccessible part of the boundary
or require measurements of Dirichlet and Neumann data on sets that cover 
a neighborhood of the so-called front face
\[
 F(x_0)=\{ x\in \partial \Omega:\ (x-x_0)\cdot \nu(x)\leq 0\} 
\]
for a point $x_0$ outside the closed convex hull of $\Omega$. Also note that partial boundary data 
determines full boundary data by unique continuation if there exists a connected neighborhood of the full boundary on which the coefficient is known,
so that uniqueness also holds in this case, cf.\ \cite{AmmariUhlmann}. 

Theorem \ref{intro:thm:local_uniqueness}, as well as the previous work
\cite{harrach2017local}, give uniqueness results where the measurements are
made on an arbitrary open set $\Sigma \subseteq \partial \Omega$. Our result shows
that a coefficient change in the positive or negative direction in a neighborhood of $\Sigma$ (or an open subset of $\Sigma$) always leads to a change in the Neumann-Dirichlet-operator irrespectively of what happens outside this neighborhood, or the geometry or topology of the domain. Note however that our uniqueness result requires that there is a neighborhood of the
boundary part on which the coefficient change is of definite sign. Our uniqueness result does not cover coefficient changes
that are infinitely oscillating between positive and negative values when approaching the boundary.

Our result is based on combining monotonicity estimates (similar to those originally derived in \cite{kang1997inverse,ikehata1998size}) with localized potentials. Other theoretical uniqueness results have been obtained by this approach in \cite{arnold2013unique,gebauer2008localized,harrach2009uniqueness,harrach2010exact,harrach2012simultaneous,harrach2017local}. 
Also note that monotonicity relations have been used in various ways in the study of inverse problems, 
see, e.g., \cite{kohn1984determining,kohn1985determining,isakov1988,alessandrini1990,ikehata1999identification},
where uniqueness results are established by methods that involve monotonicity conditions and blow-up arguments.

Monotonicity-based methods for detecting regions (or inclusions) where a coefficient function differs from a known background
have been introduced by Tamburrino and Rubinacci \cite{tamburrino2002new} for the inverse conductivity problem.
\cite{tamburrino2002new} proposed to simulate boundary measurements for a number of test regions and then use the fact that a monotonicity relation between the simulated and the true measurements will hold, if the test region lies inside the true inclusion. The work \cite{harrach2013monotonicity} used the technique of localized potentials \cite{gebauer2008localized} to prove that this is really an if-and-only-if-relation for the case of continuous measurements modeled by the NtD operator.
Moreover, \cite{harrach2013monotonicity} also showed that this if-and-only-if-relation still holds 
when the simulated measurements are replaced by linearized approximations so that the monotonicity method 
can be implemented without solving any forward problems other that that for the known background medium. 
For a list of recent works on monotonicity-based methods, let us refer to 
\cite{harrach2015combining,harrach2015resolution,harrach2016enhancing,maffucci2016novel,tamburrino2016monotonicity,barth2017detecting,garde2017comparison,garde2017convergence,garde2017regularized,harrach2017monotonicity,su2017monotonicity,ventre2017design,brander2018monotonicity,griesmaier2018monotonicity,harrach2018monotonicity,zhou2018monotonicity,harrach2018global,harrach2018uniqueness,harrach2019dimension,harrach2019fractional_II}. 

Previous monotonicity-based results often considered second order
equations with positive bilinear forms, such as the conductivity equation. 
So far, this positivity has been the key to proving monotonicity inequalities between 
the coefficient and the Neumann-to-Dirichlet operator, and previous results fail to hold
in general for equations involving a positive frequency $k > 0$ (or a negative potential for the Schrödinger equation). 
In this article, we remove this limitation and introduce methods for more general elliptic models.
We will focus on the Helmholtz equation in a bounded domain as a model case, but the
ideas might be applicable to inverse boundary value and scattering problems
for, e.g., Helmholtz, Maxwell, and elasticity equations.
The main technical novelty of this work is that we treat compact perturbations of positive bilinear forms 
by extending the monotonicity relations to only hold true up to finitely many eigenvalues, and to extend the localized potentials arguments to hold on spaces of finite codimension. 

It should also be noted that the localized potentials arguments in \cite{gebauer2008localized} stem from the ideas of the Factorization Method that was originally developed for scattering problems involving far-field measurements of the Helmholtz equation by Kirsch \cite{Kir98}, cf.\ also the book of Kirsch and Grinberg \cite{kirsch2008factorization}, and then extended to the inverse conductivity problem by Brühl and Hanke \cite{Bru00,Bru01}, cf.\ also the overview article \cite{harrach2013recent}. For the inverse conductivity problem, the Monotonicity Method has the advantage over the Factorization Method that it allows a convergent regularized numerical implementation (cf.\ \cite[Remark~3.5]{harrach2013monotonicity} and \cite{garde2017regularized}) and that it can also be used for the indefinite case where anomalies of larger and smaller conductivity are present. 
The localized potentials approach in \cite{gebauer2008localized} has recently been extended to show the possibility of localizing and concentrating electromagnetic fields in \cite{harrach2018localizing}.

The paper is structured  as follows. In section \ref{Sec:Helmholtz} we 
discuss the well-posedness of the Helmholtz equation outside resonance frequencies, 
introduce the Neu\-mann-to-Dirichlet-operators, and give a unique continuation result from sets of positive measure.
Section~\ref{Sec:Monotonicity} and \ref{sect:loc_pot} contain the main theoretical tools for this work. 
In section~\ref{Sec:Monotonicity}, we introduce a Loewner order of compact self-adjoint operators that holds up to finitely many negative eigenvalues, and show that increasing the scattering index monotonically increases the Neumann-to-Dirichlet-operator in the sense of this new order. We also characterize the connection between the finite number of negative eigenvalues that have to be excluded in the Loewner ordering and the Neumann eigenvalues for the Laplacian. Section \ref{sect:loc_pot}
extends the localized potentials result from \cite{gebauer2008localized} to the Helmholtz equation and shows that the energy terms appearing in the monotonicity relation can be controlled in spaces of finite codimension. We give two independent proofs of this result, one using a functional analytic relation between operator norms and the ranges of their adjoints, and an alternative proof that is based on a Runge approximation argument.
Section \ref{Sec:Local} and \ref{Sec:Detection} then contain the main results of this work on local uniqueness for the bounded Helmholtz equation and the detection of scatterers by monotonicity comparisons, cf.\ theorem \ref{intro:thm:local_uniqueness}
and \ref{intro:thm:main1} above.

A preliminary version of these results has been published as the extended abstract \cite{harrach2017oberwolfach}. The bound on the number of negative eigenvalues in the monotonicity inequalities derived in this work has recently been improved in \cite{harrach2019dimension}.

\noindent {\bf Acknowledgements.}
V.P.\ and M.S.\ were supported by the Academy of Finland (Finnish Centre of
Excellence in Inverse Problems Research, grant number 284715) and by an ERC
Starting Grant (grant number 307023).

\section{The Helmholtz equation in a bounded domain}
\label{Sec:Helmholtz}

We start by summarizing some properties of the Neumann-to-Dirichlet-operators, discuss well-posedness and the role of resonance frequencies, and state a unique continuation result for the Helmholtz equation in a bounded domain.

\subsection{Neumann-to-Dirichlet-operators}
\label{Subsec:NtD}

Throughout this work, let $\Omega \subseteq \R^{n}$, $n\geq 2$, denote a bounded domain with Lipschitz boundary and outer unit normal $\nu$, and let 
$\Sigma\subseteq \partial \Omega$ be an open subset of $\partial \Omega$. 
For a frequency $k\geq 0$ and a real-valued scattering coefficient function $q\in
L^\infty(\Omega)$, we consider the Helmholtz equation with (partial) Neumann boundary
data $g\in L^2(\Sigma)$, i.e., to find $u\in H^1(\Omega)$ with
\begin{equation}\labeq{Helmholtz}
(\Delta + k^2 q) u = 0 \quad \text{ in } \Omega, \qquad
\partial_\nu u|_{\partial \Omega}=\left\{ \begin{array}{l l} g & \text{ on } \Sigma,\\ 0 & \text{ else.}\end{array}\right.
\end{equation}
We also denote the solution with $u^{(g)}_q$ instead of $u$ if the choice of $g$ and $q$ is not
clear from the context. 

The Neumann problem \req{Helmholtz} is equivalent to the variational
formulation of finding $u\in H^1(\Omega)$ such that
\begin{equation}\labeq{Varform}
\int_\Omega \left( \nabla u \cdot \nabla v - k^2 q  u v\right) \dx[x]=
\int_{\partial \Omega} g v|_{\partial \Omega} \dx[s]
\quad \text{ for all } v\in H^1(\Omega).
\end{equation}
We introduce the bounded linear operators
\begin{align*}
    I: & \ H^1(\Omega) \to H^1(\Omega),\\
    j: &\ H^1(\Omega)\to L^2(\Omega),\\
    M_{q}: & \ L^2(\Omega)\to L^2(\Omega),
\end{align*}
where $I$ denotes the identity operator, $j$ is the compact embedding from $H^1$ to $L^2$,
and $M_{q}$ is the multiplication operator by $q$. We furthermore 
use $\langle \cdot, \cdot \rangle$ to denote the $H^1(\Omega)$
inner product and define the operators 
$$
K:=j^* j, \quad\text{ and }\quad  K_q:=j^* M_{q} j,
$$
which are compact self-adjoint linear operators from $H^1(\Omega)$ to $H^1(\Omega)$. By
\[
\gamma_{\Sigma}:\ H^1(\Omega) \to L^2(\Sigma),\quad v\mapsto v|_{\Sigma}
\]
we denote the compact trace operator. 

With this notation \req{Varform} can be written as
\[
\langle (I-K-k^2 K_q)u,v\rangle=\int_{\partial \Omega} g (\gamma_{\Sigma}v) \dx[s] \quad \text{ for all } v\in H^1(\Omega),
\]
so that the Neumann problem for the Helmholtz equation \req{Helmholtz} is equivalent to the equation
\begin{equation}\labeq{Helmholtz_Operatorform}
(I-K-k^2 K_q) u = \gamma_{\Sigma}^* g.
\end{equation}

Our results on identifying the scattering coefficient $q$ will require that $I-K-k^2 K_q$ is continuously invertible, which is equivalent to 
the fact that \emph{$k$ is not a resonance frequency}, or, equivalently, that \emph{$0$ is not a Neumann eigenvalue}, 
see lemma~\ref{lemma:resonances} and lemma~\ref{lemma:Neumann_EV}.
Note that this implies, in particular, that $k>0$ and $q\not\equiv 0$. We can then define the Neumann-to-Dirichlet operator (with Neumann data prescribed and Dirichlet data measured on the same open subset $\Sigma \subseteq \partial \Omega$)
\[
\Lambda(q):\ L^2(\Sigma)\to L^2(\Sigma), \quad g\mapsto u|_{\Sigma}, \text{ where } u\in H^1(\Omega) \text{ solves \req{Helmholtz}.}
\]
The Neumann-to-Dirichlet operator fulfills
\begin{equation}\labeq{NtD_Operatorform}
\Lambda(q)= \gamma_\Sigma (I-K-k^2 K_q)^{-1} \gamma_\Sigma^*,
\end{equation}
which shows that $\Lambda(q)$ is a compact self-adjoint linear operator.

We will show in section \ref{Sec:Monotonicity}, that there is a monotonicity relation between 
the scattering coefficient $q$ and the Neumann-to-Dirichlet-operator $\Lambda(q)$. Increasing $q$ will increase $\Lambda(q)$ in the sense of operator definiteness up to finitely many eigenvalues. The number of eigenvalues that do not follow the increase will be bounded by the number 
defined in the following lemma. Note that here, and throughout the paper, we always count the number of eigenvalues of a compact self-adjoint operator with multiplicity according to the dimension of the associated eigenspaces.


\begin{lemma}\label{lemma:d_q}
Given $k>0$, and $q\in L^\infty(\Omega)$, let $d(q)$ be the number of eigenvalues of $K+k^2 K_q$ that are larger than $1$, and 
let $V(q)$ be the sum of the associated eigenspaces. Then $d(q)=\dim(V(q))\in \N_0$ is finite, and
\[
\int_\Omega \left(|\nabla v|^2 - k^2 q |v|^2\right)\dx \geq 0 \quad \text{ for all $v\in V(q)^\perp$}
\]
where $V(q)^{\perp}$ denotes the orthocomplement of $V(q)$ in $H^1(\Omega)$.
\end{lemma}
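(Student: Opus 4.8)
The plan is to reformulate everything in terms of the single compact self-adjoint operator $A := K + k^2 K_q$ on $H^1(\Omega)$ and then invoke the spectral theorem. First I would record that $A$ is compact and self-adjoint with respect to the $H^1(\Omega)$ inner product $\langle \cdot, \cdot \rangle$, since $K = j^*j$ and $K_q = j^* M_q j$ are both of this form and $q$ is real-valued. By the spectral theorem for compact self-adjoint operators, the nonzero eigenvalues of $A$ form a sequence accumulating only at $0$, so only finitely many of them can exceed $1$. This immediately gives that $d(q)$ is finite and, counting multiplicities according to the dimensions of the eigenspaces, that $d(q) = \dim(V(q)) \in \N_0$.

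The heart of the argument is the identity
\[
\int_\Omega \left(|\nabla v|^2 - k^2 q |v|^2\right)\dx = \langle (I - A) v, v \rangle \quad \text{for all } v\in H^1(\Omega).
\]
I would verify this directly from the definitions of $K$ and $K_q$: testing against $v$ gives $\langle Kv, v\rangle = \int_\Omega |v|^2\dx$ and $\langle K_q v, v\rangle = \int_\Omega q|v|^2\dx$, while $\langle v, v\rangle = \int_\Omega (|\nabla v|^2 + |v|^2)\dx$ is the squared $H^1$ norm. Subtracting the $L^2$ part from the full $H^1$ norm rewrites the left-hand quadratic form as $\langle v, v\rangle - \langle Kv, v\rangle - k^2\langle K_q v, v\rangle$, which is exactly $\langle (I - A)v, v\rangle$.

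It then remains to show that $I - A$ is nonnegative on $V(q)^\perp$. Since $V(q)$ is a sum of eigenspaces of the self-adjoint operator $A$, its orthogonal complement $V(q)^\perp$ is $A$-invariant, and the spectrum of the restriction $A|_{V(q)^\perp}$ consists precisely of the eigenvalues of $A$ that are $\leq 1$ (together with the eigenvalue $0$ coming from the kernel). Consequently $A|_{V(q)^\perp} \leq I$ in the Loewner sense, that is $\langle (I - A)v, v\rangle \geq 0$ for every $v\in V(q)^\perp$, which combined with the identity above yields the asserted inequality. I expect the only delicate point to be the bookkeeping of the spectral decomposition on $H^1(\Omega)$ — in particular checking that the possibly infinite-dimensional kernel of $A$, whose eigenvalue $0$ is $\leq 1$, is correctly absorbed into $V(q)^\perp$ — but this becomes routine once the reformulation of the quadratic form as $\langle (I-A)v, v\rangle$ is in place.
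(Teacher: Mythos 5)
Your proof is correct and takes essentially the same route as the paper: the paper's one-line proof consists precisely of your key identity $\langle (I-K-k^2 K_q)v,v\rangle=\int_\Omega \left(|\nabla v|^2 - k^2 q |v|^2\right)\dx$ followed by an appeal to the spectral theorem for compact self-adjoint operators, which is exactly the argument you spell out. The extra details you supply --- the $A$-invariance of $V(q)^\perp$ and the bookkeeping for the (possibly infinite-dimensional) kernel with eigenvalue $0\leq 1$ --- are just the routine steps the paper leaves implicit.
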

\begin{proof}
Since
\[
\langle (I-K-k^2 K_q)v,v\rangle=\int_\Omega \left(|\nabla v|^2 - k^2 q |v|^2\right) \dx,
\]
the assertion follows from the spectral theorem for compact self-adjoint operators.
\end{proof}

We will show in lemma~\ref{lemma:Neumann_EV} that $d(q)$ agrees with the number of positive Neumann eigenvalues of $\Delta+k^2q$.
If $q(x)\leq q_\text{max}\in \R$ for all $x\in \Omega$ (a.e.) then $d(q)\leq d(q_\text{max})$, and $d(q_\text{max})$ is the number of 
Neumann eigenvalues of the Laplacian $\Delta$ that are larger than $-k^2q_\text{max}$, cf.\ corollary \ref{corollary:d_vs_Neumann_EV_Laplacian}.

\subsection{Resonance frequencies}\label{subsect:resonances}

We now summarize some results on the solvability of the Helmholtz equation \req{Helmholtz} outside of
resonance frequencies.

\begin{lemma}\label{lemma:resonances}
Let $q \in L^{\infty}(\Omega)$.
\begin{enumerate}[(a)]
\item For each $k\geq 0$, the following properties are equivalent:

\begin{enumerate}[(i)]
\item For each $F\in L^2(\Omega)$ and $g\in L^2(\partial \Omega)$, there exists a unique solution $u\in H^1(\Omega)$
of 
\begin{equation}\labeq{Helmholtz_with_rhs}
(\Delta + k^2 q) u = F \quad \text{ in } \Omega, \quad \partial_\nu u|_{\partial \Omega}= g,
\end{equation}
and the solution depends linearly and continuously on $F$ and $g$. 
%
\item The homogeneous Neumann problem 
\begin{equation}\labeq{Helmholtz_homogeneous}
(\Delta + k^2 q) u = 0 \quad \text{ in } \Omega, \quad \partial_\nu u|_{\partial \Omega}= 0,
\end{equation}
admits only the trivial solution $u\equiv 0$.
\item The operator $I-K-k^2 K_q:\ H^1(\Omega)\to H^1(\Omega)$ is continuously invertible.
\end{enumerate}

$k$ is called a \emph{resonance frequency}, if the properties (i)--(iii) do not hold.

\item If $q\not\equiv 0$, then the set of resonance frequencies is countable and discrete.
\end{enumerate}
\end{lemma}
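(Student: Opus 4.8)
The plan is to derive part (a) from the Fredholm alternative for $I-K-k^2K_q$ and part (b) from the analytic Fredholm theorem applied to the holomorphic operator family $\lambda\mapsto I-K-\lambda K_q$ with $\lambda=k^2$. Throughout I would use that $K+k^2K_q$ is a compact self-adjoint operator on $H^1(\Omega)$, as already noted for $K=j^*j$ and $K_q=j^*M_qj$ (the embedding $j$ being compact), so that $I-K-k^2K_q$ is a compact perturbation of the identity.

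For part (a), I would first note via \req{Varform} (with zero boundary data) that the homogeneous problem \req{Helmholtz_homogeneous} is equivalent to $u\in\kernel(I-K-k^2K_q)$, so (ii) is precisely the injectivity of $I-K-k^2K_q$. The Fredholm alternative for $I-(\mathrm{compact})$ then gives injective $\Leftrightarrow$ bijective, and the bounded inverse theorem turns bijectivity into continuous invertibility, which is (ii)$\Leftrightarrow$(iii). To close the cycle I would prove (iii)$\Rightarrow$(i)$\Rightarrow$(ii): under (iii) the weak form of \req{Helmholtz_with_rhs} reads $\langle(I-K-k^2K_q)u,v\rangle=\int_{\partial\Omega}g\,v|_{\partial\Omega}\dx[s]-\int_\Omega Fv\dx$ for all $v\in H^1(\Omega)$; its right-hand side is a bounded linear functional of $v$ (trace theorem and Cauchy--Schwarz), so by Riesz representation it equals $\langle w,v\rangle$ with $w$ depending linearly and continuously on $(F,g)$, and then $u=(I-K-k^2K_q)^{-1}w$ is the unique, continuously dependent solution, i.e.\ (i); specializing (i) to $F=0$, $g=0$ returns the trivial kernel, i.e.\ (ii).

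For part (b), I would view $A(\lambda):=I-K-\lambda K_q$ as an entire $\mathcal B(H^1(\Omega))$-valued function that is affine in $\lambda$, with $K+\lambda K_q$ compact for every $\lambda\in\C$. The analytic Fredholm theorem on the connected set $\C$ says that either $A(\lambda)$ is invertible for no $\lambda$, or $A(\lambda)$ is invertible off a discrete, hence countable, set $S\subseteq\C$. The key step is to exclude the first alternative by exhibiting an invertible value. Testing a kernel element against itself gives $0=\langle A(\lambda)u,u\rangle=\int_\Omega|\nabla u|^2\dx-\lambda\int_\Omega q|u|^2\dx$: if $\int_\Omega q|u|^2\dx\neq0$ then $\lambda=\int_\Omega|\nabla u|^2\dx/\int_\Omega q|u|^2\dx\in\R$ because $q$ is real-valued, while if $\int_\Omega q|u|^2\dx=0$ then $\int_\Omega|\nabla u|^2\dx=0$, forcing $u$ to equal a constant $c$, whereupon the full identity $\langle A(\lambda)u,v\rangle=-\lambda c\int_\Omega q\,\overline v\dx$ together with $q\not\equiv0$ forces $\lambda=0$. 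Hence every non-invertibility point of $A$ is real; in particular $A(i)$ is invertible, the first alternative is ruled out, and $S$ is discrete. Since the resonance frequencies are $\{k\ge0:\ k^2\in S\}$ and $k\mapsto k^2$ is a homeomorphism of $[0,\infty)$, they form a discrete and countable set.

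I expect the main obstacle to be this last crux of part (b): excluding the ``nowhere invertible'' alternative of the analytic Fredholm theorem. This is exactly where both hypotheses, $q$ real-valued and $q\not\equiv0$, enter, and the delicate case is that of constant functions (the kernel of $I-K$), where the quadratic form is inconclusive and one must return to the full operator equation $A(\lambda)u=0$ to conclude $\lambda=0$.
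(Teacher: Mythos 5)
Your proposal is correct and follows essentially the same route as the paper: part (a) via the Fredholm alternative for the compact perturbation $I-K-k^2K_q$ together with the variational formulation, and part (b) via the analytic Fredholm theorem, excluding the nowhere-invertible alternative by using the quadratic form to force kernel elements to be constant and then invoking $q\not\equiv 0$. The only cosmetic differences are that you parametrize by $\lambda=k^2$ and show all non-invertibility points are real (so $A(i)$ is invertible), whereas the paper parametrizes by $k$ and checks injectivity directly at any $k\neq 0$ with $\Re(k^2)=0$ by testing with $v=\overline{u}$ and taking real parts.
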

\begin{proof}
\begin{enumerate}[(a)]
\item Clearly, (i) implies (ii), and, using the equivalence of \req{Helmholtz} and \req{Helmholtz_Operatorform}, (ii) implies that $I-K-k^2 K_q$ is injective. Since $K$ and $K_q$ are compact, the operator $I-K-k^2 K_q$ is Fredholm of index 0. Hence, injectivity of $I-K-k^2 K_q$ already implies that $I-K-k^2 K_q$ is continuously invertible, so that (ii) implies (iii). Finally, $u\in H^1(\Omega)$ solves
\req{Helmholtz_with_rhs} if and only if
\begin{equation*}
\int_\Omega \left( \nabla u \cdot \nabla v - k^2 q u v\right) \dx[x]=
-\int_\Omega F v \dx + \int_{\partial \Omega} g v|_{\partial \Omega} \dx[S]
\quad \text{ for all } v\in H^1(\Omega).
\end{equation*}
This is equivalent to 
\[
\langle (I-K-k^2 K_q)u,v\rangle=-\int_\Omega F j(v) \dx + \int_{\partial \Omega} g \gamma_{\partial \Omega}(v) \dx[s] \quad \text{ for all } v\in H^1(\Omega),
\]
and thus equivalent to
\[
(I-K-k^2 K_q)u= -j^*F + \gamma_{\partial \Omega}^* g,
\]
so that (iii) implies (i).
\item We extend $I$, $K$, and $K_q$ to the Sobolev space of complex valued functions
\[
I,K,K_q:\ H^1(\Omega;\C)\to H^1(\Omega;\C).
\]
For $k\in \C$ we then define
\[
R(k):= K + k^2 K_q:\ H^1(\Omega;\C)\to H^1(\Omega;\C).
\]
$R(k)$ is a family of compact operators depending analytically on $k\in \C$. 
The analytic Fredholm theorem (see, e.g., \cite[Thm.\ VI.14]{RSI}) now implies
that either $I-R(k)$ is not invertible for all $k\in \C$, or that 
there is a countable discrete set $Z \subseteq \C$ such that $I-R(k)$
is continuously invertible when $k \in \C \setminus Z$. Hence, to prove (b), it suffices to
show that there exists $k\in \C$ for which $I-R(k)$ is invertible.

We will show that this is the case for any $0\neq k\in \C$ with $\Re (k^2) = 0$. In fact,
$(I - R(k))u = 0$ implies that
\begin{equation*} 
0 = \int_{\Omega} \left( \nabla u\cdot \nabla v - k^2 q u v \right) \dx \quad \text{ for all } v\in H^1(\Omega;\C).
\end{equation*}
Using $v:=\overline{u}$ and taking the real part yields that $0 = \int_{\Omega} |\nabla u|^2 \dx$,
which shows that $u$ must be constant, and that
\[
\int_\Omega k^2 q u v \dx=0\quad \text{ for all } v\in H^1(\Omega;\C).
\]
Together with $k^2\neq 0$, and $q\not\equiv 0$, this shows that $u\equiv 0$.
Hence, $I - R(k)$ is injective and thus invertible for all $0\neq k\in \C$ with $\Re (k^2) = 0$.
\end{enumerate}
\end{proof}

\subsection{Unique continuation}
\label{Subsec:UCP}

We will make use of a unique continuation property for the
Helmholtz equation from sets of positive measure. 
In two dimensions, this follows from a standard reduction to quasiconformal mappings.
However, since we could not find a proof in the literature we will first give the argument following \cite{Alessandrini_ucp} and references therein (in fact \cite{Alessandrini_ucp} proves strong unique continuation for more general equations). See also \cite{AstalaIwaniecMartin} for basic facts on quasiconformal mappings in the plane.

\begin{lemma}\label{lemma:UCP_2d}
Let $\Omega \subset \R^2$ be a connected open set, and suppose that $u \in
H^1_{\mathrm{loc}}(\Omega)$ is a weak solution of 
\[
-\mathrm{div}(A\nabla u) + du = 0 \text{ in $\Omega$},
\]
where $A \in L^{\infty}(\Omega, \R^{n \times n})$ is symmetric and satisfies
$A(x) \xi \cdot \xi \geq c_0 |\xi|^2$ for some $c_0 > 0$, and $d \in
L^{q/2}(\Omega)$ for some $q > 2$. If $u$ vanishes in a set $E$ of positive
measure, then $u \equiv 0$ in $\Omega$.
\end{lemma}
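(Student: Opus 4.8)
The plan is to reduce the statement to a local one on a disk, propagate by connectedness, and prove the local statement by the classical $2$D reduction of second order elliptic equations to first order (generalized) Beltrami systems, whose solutions are governed by quasiregular and generalized analytic function theory. For the propagation, set $U=\{x\in\Omega:\ u\equiv 0 \text{ in some neighborhood of }x\}$. This set is open; it is nonempty because at a Lebesgue density point $x_0$ of $E$ one has $|E\cap B|>0$ for a small disk $B\subset\Omega$ centered at $x_0$, so the local statement forces $u\equiv0$ on $B$. The local statement also shows $U$ is relatively closed in $\Omega$, since near any point of $\overline U\cap\Omega$ the function $u$ already vanishes on a nonempty open set, hence on a set of positive measure. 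As $\Omega$ is connected, $U=\Omega$, so it suffices to treat the local statement on a disk.

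For the local statement I would exploit that a Sobolev function has vanishing gradient almost everywhere on any of its level sets; thus $u\in H^1$ and $u=0$ on $E$ give $\nabla u=0$, and hence $\partial_z u=0$, almost everywhere on $E$ (no continuity of $u$ is needed for this measure theoretic fact). To bring in quasiconformal theory I would first straighten the principal part: using only uniform ellipticity and symmetry of $A$, isothermal coordinates provide a quasiconformal homeomorphism $\Phi$ such that $\tilde u:=u\circ\Phi^{-1}$ solves $-\Delta\tilde u+\tilde d\,\tilde u=0$, with $\tilde d$ still in $L^{p}_{\mathrm{loc}}$ for some $p>1$ inherited from $d\in L^{q/2}$, $q>2$. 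Since quasiconformal maps and their inverses send null sets to null sets, $\tilde u$ vanishes on the positive measure set $\Phi(E)$, where also $\partial_z\tilde u=0$ a.e., and it suffices to prove $\tilde u\equiv0$ at this level.

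The key step is the first order system satisfied by the pair $W=(\tilde u,\partial_z\tilde u)$. Using $\partial_{\bar z}\tilde u=\overline{\partial_z\tilde u}$ for the real valued $\tilde u$, together with $\partial_{\bar z}\partial_z\tilde u=\tfrac14\Delta\tilde u=\tfrac14\tilde d\,\tilde u$, the vector $W$ satisfies a generalized analytic (Vekua) system
\[
\partial_{\bar z}W=\mathcal{A}\,W+\mathcal{B}\,\overline{W},
\]
whose coefficient $\mathcal{A}$ is built from $\tilde d$ and lies in $L^{p}$, $p>1$, while $\mathcal{B}$ is a constant matrix. I would then invoke the similarity principle for such systems: one represents $W=C\,\Phi$, where $\Phi$ is a holomorphic vector and $C$ is a matrix valued factor, obtained from the Cauchy transform of the coefficients, that is invertible almost everywhere (the Cauchy transform being finite a.e.\ precisely because $p>1$). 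Both components of $W$ vanish a.e.\ on $\Phi(E)$ — the first because $\tilde u=0$ there, the second by the level set gradient fact — so $\Phi$ vanishes on a set of positive measure. Being holomorphic, $\Phi\equiv0$, whence $W\equiv0$ and $\tilde u\equiv0$, and therefore $u\equiv0$.

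The main obstacle is exactly this reduction at the available regularity. For merely bounded measurable $A$ one must carry out the isothermal change of variables and control the transformed lower order coefficient $\tilde d$, and then establish the similarity principle for the Vekua system with coefficient only in $L^p$, $p>1$, verifying that the similarity factor $C$ is invertible almost everywhere rather than continuous. This is precisely where the hypothesis $q>2$ enters, and it is also what distinguishes the positive measure version proved here from strong unique continuation, which would instead require the stronger integrability making $C$ (Hölder) continuous. The remaining ingredient, that a nonconstant holomorphic (or quasiregular) map cannot vanish on a set of positive measure, is classical.
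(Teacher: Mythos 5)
Your global skeleton is sound, and one ingredient is genuinely nicer than the paper's: the propagation by an open--closed argument driven by the local statement itself (the paper instead invokes weak unique continuation from \cite{Alessandrini_ucp} once $u$ vanishes on a ball), and the observation that an $H^1$ function has $\nabla u=0$ a.e.\ on its zero set, which replaces the paper's two-step detour of upgrading $u$ to $W^{1,p}$, $p>2$, and then differentiating pointwise at density points of $E_r$. But the core local step has two genuine gaps. First, the straightening claim is false as stated: a quasiconformal change of variables $\Phi$ transforms $\mathrm{div}(A\nabla u)$ into $\mathrm{div}(A'\nabla\tilde u)$ with $A'=\bigl(D\Phi\,A\,D\Phi^T/\det D\Phi\bigr)\circ\Phi^{-1}$, so $\det A'=\det A\circ\Phi^{-1}$ is invariant, and $A'=I$ would force $\det A\equiv 1$. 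For general measurable symmetric uniformly elliptic $A$ the best achievable is the isotropic matrix $\bigl(\sqrt{\det A}\circ\Phi^{-1}\bigr)I$, i.e.\ $\mathrm{div}(\sigma\nabla\tilde u)=\tilde d\,\tilde u$ with a merely measurable scalar $\sigma$; since $\sigma$ is not differentiable you cannot pass to $\partial_{\bar z}\partial_z\tilde u=\frac14\Delta\tilde u=\frac14\tilde d\,\tilde u$, so the pure Vekua system for $W=(\tilde u,\partial_z\tilde u)$ is unavailable, and what you actually face is a Beltrami-type first-order system containing $\partial_z W$ terms with measurable coefficients.

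Second, even granting the reduction, the similarity principle you invoke does not exist at this generality. It is a theorem for \emph{scalar} pseudoanalytic functions; for genuine systems $\partial_{\bar z}W=\mathcal{A}W+\mathcal{B}\overline{W}$ the representation $W=C\Phi$ with a.e.\ invertible $C$ holds only under restrictive structural hypotheses (Bojarski's quasidiagonal generalized analytic vectors), and fails in general. Moreover, even in the scalar case the classical construction needs coefficients in $L^p$ with $p>2$, so that the Cauchy transform produces a continuous exponentiable factor and so that $e^{-s}W$ can be shown holomorphic; your hypothesis $d\in L^{q/2}$, $q>2$, yields only $p>1$, and for $1<p\le 2$ the transform is merely $L^{p^*}$, so the problem is not that $C$ might fail to be invertible a.e.\ but that the factorization itself cannot be justified. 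The paper sidesteps both difficulties in one stroke: following \cite[Proposition 2.4]{Alessandrini_ucp}, on a sufficiently small disk there is a solution $w$ of the \emph{same} equation with $1/2\le w\le 2$ (this is exactly where $q>2$ is spent), and $v=u/w$ solves the pure divergence equation $\mathrm{div}(\widetilde{A}\nabla v)=0$ with $\widetilde{A}=w^2A$; then the scalar stream function $f=v+i\tilde v$ is quasiregular, Stoilow factorization $f=F\circ\chi$ applies, and since $J_f=0$ a.e.\ on $E_r$ while $J_\chi\neq 0$ a.e., $F'$ vanishes on a set accumulating at $\chi(x_0)$, forcing $F'\equiv 0$ locally and $v\equiv 0$ near $x_0$. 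If you keep your propagation and level-set observations but replace the system similarity step by this multiplier reduction, your argument closes.
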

\begin{proof}
It is enough to show that $u$ vanishes in some ball, since then weak (or strong) unique continuation \cite{Alessandrini_ucp} implies that $u \equiv 0$.  Let $x_0$ be a point of density one in $E$ and let $U_r := B_r(x_0)$ and $E_r := E \cap U_r$. There is $r_0 > 0$ so that if $r < r_0$, then $U_r \subset \Omega$ and $E_r$ has positive measure.

We will now work in $U_r$. Observe first that there is $p > 2$ so that $u \in W^{1,p}(U_r)$ \cite[Theorem 16.1.4]{AstalaIwaniecMartin}. In particular $u$ is H\"older continuous and we may assume (after removing a set of
measure zero from $E$) that $u(x) = 0$ for all $x \in E_r$. The first step is to show that $\nabla u = 0$ a.e.\ on $E_r$. Let $N_1$ be the set of points in $E_r$ where $u$ is not differentiable, and let $N_2$ be the set of points of density $< 1$ in $E_r$. Then $N_1$ and $N_2$ have zero measure. Fix a point $x \in E_r \setminus (N_1 \cup N_2)$ and a unit direction $e$. There is a sequence $(x_j)$ with $x_j \in B(x,1/j) \cap E_r$ so that $|\frac{x_j-x}{|x_j-x|} - e| \leq 1/j$ for $j$ large (for if not, then all points in $E_r$ near $x$ would be outside a fixed sector in direction $e$ which contradicts the fact that $x$ has density one). Since $u$ is differentiable at $x$, 
\[
u(x_j) - u(x) = \nabla u(x) \cdot (x_j-x) + o(|x-x_j|).
\]
Dividing by $|x-x_j|$ and using that $u(x_j) = u(x) = 0$ implies that $\nabla u(x) \cdot e = 0$. It follows that $\nabla u$ vanishes in $E_r \setminus (N_1 \cup N_2)$, so indeed 
\begin{equation} \label{u_vanishing}
u = 0 \text{ in $E_r$}, \qquad \nabla u = 0 \text{ a.e.\ in $E_r$}.
\end{equation}

The next step is to reduce to the case where $d=0$. As in \cite[Proposition 2.4]{Alessandrini_ucp}, we choose $r$ small enough so that there is a nonvanishing $w \in W^{1,p}(U_r)$ satisfying  
\begin{gather*}
-\mathrm{div}(A\nabla w) + dw = 0 \text{ in $U_r$}, \\
1/2 \leq w \leq 2 \text{ in $U_r$}, \quad \norm{\nabla w}_{L^p(U_r)} \leq 1.
\end{gather*}
We write $v = u/w$. It follows that $v \in W^{1,p}(U_r)$ is a weak solution of 
\[
-\mathrm{div}(\widetilde{A} \nabla v) = 0 \text{ in $U_r$}
\]
where $\widetilde{A} = w^2 A$ is $L^{\infty}$ and uniformly elliptic. Moreover, \eqref{u_vanishing} implies that 
\begin{equation} \label{v_vanishing}
v = 0 \text{ in $E_r$}, \qquad \nabla v = 0 \text{ a.e.\ in $E_r$}.
\end{equation}
To prove the lemma, we will show that $v \equiv 0$ in some ball.

Let $J = \left( \begin{array}{cc} 0 & -1 \\ 1 & 0 \end{array} \right)$. Since $\widetilde{A} \nabla v$ is divergence free, there is a real valued function $\tilde{v} \in H^1(U_r)$ satisfying 
\begin{equation} \label{v_gradient}
\nabla \tilde{v} = J(\widetilde{A} \nabla v).
\end{equation}
Such a function $\tilde{v}$ is unique up to an additive constant. Define 
\[
f = v + i \tilde{v}.
\]
As in \cite{Alessandrini_ucp}, $f \in H^1(U_r)$ solves an equation of the form 
\[
\partial_{\bar{z}} f = \mu \partial_z f + \nu \overline{\partial_z} f \text{ in $U_r$}
\]
where $\norm{\mu}_{L^{\infty}(U_r)} + \norm{\nu}_{L^{\infty}(U_r)} < 1$. It follows that $f$ is a quasiregular map and by the Stoilow factorization \cite[Theorem 5.5.1]{AstalaIwaniecMartin} it has the representation 
\[
f(z) = F(\chi(z)), \qquad z \in U,
\]
where $\chi$ is a quasiconformal map $\C \to \C$ and $F$ is a holomorphic function on $\chi(U)$.

Finally, the Jacobian determinant $J_f$ of $f$ is given by 
\[
J_f(z) = F'(\chi(z)) J_{\chi}(z).
\]
Using \eqref{v_vanishing} and \eqref{v_gradient}, we see that $J_f = 0$ a.e.\ in $E_r$. Moreover, since $\chi$ is quasiconformal, $J_{\chi}$ can only vanish in a set of measure zero \cite[Corollary 3.7.6]{AstalaIwaniecMartin}. It follows that $F'(\chi(z)) = 0$ for a.e.\ $z \in E_r$. Then the Taylor series of the analytic function $F'$ at $\chi(x_0)$ must vanish (otherwise one would have $F'(\chi(z)) = (\chi(z)-\chi(x_0))^N g(\chi(z))$ where $g(\chi(x_0)) \neq 0$ and the only zero near $x_0$ would be $z=x_0$). Thus $F' = 0$ near $x_0$, so $F$ is constant, $f$ is also constant, and $v = 0$ near $x_0$.
\end{proof}

We can now state the unique continuation property for any dimension $n\geq 2$ in the form that we will utilize in 
the later sections. As in \cite[Def.~2.2]{harrach2013monotonicity} we say that
a relatively open subset $O\subseteq \overline \Omega$ is \emph{connected to
$\Sigma$} if $O$ is connected and $\Sigma\cap O\neq \emptyset$.

\begin{theorem}\label{thm:UCP}
\begin{enumerate}[(a)]
\item Let $u\in H^1(\Omega)$ solve
\begin{equation}\labeq{UCP_Helmholtz}
    (\Delta +k^2 q) u = 0\quad \text{ in $\Omega$.} 
\end{equation}
If $u|_E=0$ for a subset $E\subseteq \Omega$ with positive measure then $u(x)=0$ for all $x\in \Omega$ (a.e.)
\item Let $u\in H^1(\Omega)$, $\Delta u\in L^2(\Omega)$, and 
\begin{equation*}
    (\Delta +k^2 q) u = 0 \quad \text{ in $\Omega\setminus C$,}
\end{equation*}
for a closed set $C$ for which $\overline \Omega\setminus C$ is connected to
$\Sigma$.
If $u|_{\Sigma}=0$ and $\partial_\nu u|_{\Sigma}=0$, then $u(x)=0$ for all $x\in \Omega\setminus C$ (a.e.)
\end{enumerate}
\end{theorem}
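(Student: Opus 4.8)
The plan is to prove part~(a) first, in all dimensions, and then derive part~(b) from it by an extension-by-zero argument across $\Sigma$ followed by interior propagation.

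For part~(a) I would split according to the dimension. When $n=2$, the equation $(\Delta+k^2q)u=0$ is exactly of the form $-\mathrm{div}(A\nabla u)+du=0$ with $A=\mathrm{Id}$ (uniformly elliptic) and $d=-k^2q\in L^\infty(\Omega)\subseteq L^{p/2}(\Omega)$ for every $p>2$, so the hypotheses of Lemma~\ref{lemma:UCP_2d} hold and it gives $u\equiv0$ directly. When $n\geq3$, standard elliptic regularity and bootstrapping give $u\in W^{2,p}_{\mathrm{loc}}(\Omega)$ for every $p<\infty$, and I would invoke the strong unique continuation property for Schr\"odinger operators $\Delta+V$ with $V=k^2q\in L^\infty$. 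The precise consequence I need is that a nontrivial solution satisfies a local doubling inequality, so that its nodal set $\{u=0\}$ has Lebesgue measure zero; hence vanishing on a set $E$ of positive measure forces $u\equiv0$. This step rests on classical unique continuation results, which I would cite rather than reprove.

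For part~(b), fix $x_0\in\Sigma\cap(\overline\Omega\setminus C)$, which exists because $\overline\Omega\setminus C$ is connected to $\Sigma$, and choose a ball $B=B_r(x_0)$ small enough that $B\cap C=\emptyset$, so that the equation holds throughout $\Omega\cap B$. Define the extension $\tilde u:=u$ on $\Omega\cap B$ and $\tilde u:=0$ on $B\setminus\overline\Omega$. The condition $u|_\Sigma=0$ guarantees that the two pieces match in trace, so $\tilde u\in H^1(B)$; and the condition $\partial_\nu u|_\Sigma=0$, together with $\Delta u\in L^2$ and Green's identity on the Lipschitz piece $\Omega\cap B$, guarantees that the distributional Laplacian of $\tilde u$ carries no singular contribution along $\Sigma$. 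A double integration by parts against $\varphi\in C_c^\infty(B)$ then shows that $\tilde u$ is a weak $H^1$-solution of $(\Delta+k^2\tilde q)\tilde u=0$ in $B$, where $\tilde q\in L^\infty(B)$ agrees with $q$ on $\Omega$ and is extended arbitrarily (e.g.\ by $0$) outside. Since $\tilde u$ vanishes on the nonempty open set $B\setminus\overline\Omega$, part~(a) applied on the connected set $B$ yields $\tilde u\equiv0$, hence $u=0$ on the nonempty open set $W:=\Omega\cap B\subseteq\Omega\setminus C$.

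It remains to propagate this local vanishing to all of $\Omega\setminus C$. The set $\Omega\setminus C$ is open, and its connectedness follows from that of $\overline\Omega\setminus C$ since $\Omega$ is Lipschitz; the equation holds on it and $u$ vanishes on $W$, so the interior unique continuation of part~(a), whose proof works verbatim on any connected open set carrying a bounded potential, gives $u=0$ a.e.\ in $\Omega\setminus C$. The \textbf{main obstacle} I anticipate is the extension step in part~(b): one must verify carefully that extending by zero across the Lipschitz boundary produces a genuine weak solution, which is precisely where both Cauchy conditions $u|_\Sigma=0$ and $\partial_\nu u|_\Sigma=0$ are consumed. A secondary difficulty is securing, for $n\geq3$, a clean citable statement that nodal sets of solutions of $\Delta+V$ with $V\in L^\infty$ have measure zero.
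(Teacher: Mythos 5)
Your proposal is correct and takes essentially the same route as the paper: part (a) is settled for $n=2$ by lemma~\ref{lemma:UCP_2d} and for $n\geq 3$ by citing known unique continuation results (the paper invokes \cite[Theorem~4.2]{harrach2017local}, which combines the same classical literature), while part (b) is obtained by extending $u$ by zero onto a small ball $B$ with $B\cap\partial\Omega\subseteq\Sigma$ and applying (a). The extension-across-$\Sigma$ verification you identify as the main obstacle is precisely the step the paper delegates to the proof of lemma 4.4c) in \cite{harrach2017local}, and your sketch of it (both Cauchy conditions removing the singular boundary contribution) matches that argument.
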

\begin{proof}
For $n=2$, (a) follows from lemma~\ref{lemma:UCP_2d}. For $n\geq 3$, (a) is shown in \cite[Theorem~4.2]{harrach2017local} (see also \cite[proof of Theorem 2.1]{regbaoui2001unique}) by combining the following two results:
 \begin{enumerate}
\item[(i)] If $u\in H^1(\Omega)$ solves \req{UCP_Helmholtz} and vanishes on a measurable set of positive measure 
then $u$ has a zero of infinite order (see, e.g., Figueiredo and Gossez \cite[Proposition 3]{de1992strict}, or Hadi and Tsouli \cite[Theorem 2.1]{hadi2001strong}).
\item[(ii)] The trivial solution $u\equiv 0$ is the only $H^1(\Omega)$-solution of \req{UCP_Helmholtz} that has a zero of infinite order (see, e.g, the book of H\"ormander \cite[Theorem 17.2.6]{hormander1983analysis}).
\end{enumerate}

(b) follows from (a) by extending $u$ by zero on $B\setminus \Omega$ where $B$ is a small ball 
with $B\cap \partial \Omega\subseteq \Sigma$, cf.\ the proof of lemma 4.4c) in \cite{harrach2017local}.
\end{proof}

\section{Monotonicity and localized potentials for the Helmholtz equation}
\label{Sec:Monotonicity}

In this section we show that increasing the scattering coefficient leads to a
larger Neumann-to-Dirichlet operator in a certain sense. For this result, the Neumann-to-Dirichlet operators are ordered by an extension of the Loewner
order of compact self-adjoint operators that holds up to finitely many negative eigenvalues.

\subsection{A Loewner order up to finitely many eigenvalues}\label{subsect:Loewner_order}

We start by giving a rigorous definition and characterization of this ordering. 

\begin{definition}
Let $A,B:\ X\to X$ be two self-adjoint compact linear operators on a Hilbert
space $X$. For a number $d\in \N_0$, we write 
\[
A\leq_{d} B, \quad \text{ or } \quad \langle Ax,x\rangle \leq_d \langle Bx,x\rangle, 
\]
if $B-A$ has at most $d$ negative eigenvalues. We also write $A\leq_\text{fin}
B$ if $A\leq_{d} B$ holds for some $d\in \N_0$, and we write $A\leq B$ if
$A\leq_{d} B$ holds for $d=0$.   
\end{definition}

Note that for $d=0$ this is the standard partial ordering of compact
self-adjoint operators in the sense of operator definiteness (also called
Loewner order).  Also note that ''$\leq_\text{fin}$'' and ''$\leq_{d}$'' (for
$d\neq 0$) are not partial orders since they are clearly not antisymmetric. Obviously,
''$\leq_\text{fin}$'' and ''$\leq_{d}$'' are reflexive, and 
''$\leq_\text{fin}$'' is also transitive (see
lemma~\ref{lemma:order_transitive} below) and thus a so-called preorder.

To characterize this new ordering, we will make use of the following lemma.
\begin{lemma}\label{lemma_eigenvalues_quadform}
Let $A:\ X\to X$ be a self-adjoint compact linear operator on a Hilbert space $X$ with inner product $\langle \cdot, \cdot\rangle$ 
inducing the norm $\norm{\cdot}$. Let $d\in \N_0$ and $r\in \R$, $r\geq 0$.
\begin{enumerate}[(a)]
\item The following statements are equivalent:
\begin{enumerate}[(i)]
\item $A$ has at most $d$ eigenvalues larger than $r$.
\item There exists a compact self-adjoint operator $F:\ X\to X$ with 
\[
\dim( \range(F))\leq d,\quad \text{ and } \quad \langle (A - F) x,x\rangle \leq
        r \norm{x}^2 \quad \text{ for all } x\in X,
\]
where $\range(F)$ stands for the range of $F$.
\item There exists a subspace $W\subset X$ with $\mathrm{codim}(W)\leq d$ such that
\[
\langle Aw,w\rangle \leq r \norm{w}^2 \quad \text{ for all } w\in W.
\] 
\item There exists a subspace $V\subset X$ with $\dim (V)\leq d$ such that
\[
\langle Av,v\rangle \leq r \norm{v}^2 \quad \text{ for all } v\in V^\perp.
\]
\end{enumerate}
\item The following statements are equivalent:
\begin{enumerate}[(i)]
\item $A$ has (at least) $d$ eigenvalues larger than $r$.
\item There exists a subspace $V\subset X$ with $\dim (V)\geq d$ such that
\[
\langle Av,v\rangle > r \norm{v}^2 \quad \text{ for all } v\in V.
\]
\end{enumerate}
\end{enumerate}
\end{lemma}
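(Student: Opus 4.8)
The plan is to deduce everything from the spectral theorem for compact self-adjoint operators, writing $A$ in an orthonormal eigenbasis with real eigenvalues $\lambda_1\geq\lambda_2\geq\cdots$ accumulating only at $0$; this is just the Courant--Fischer min-max principle in disguise, phrased so that it reads off the number of eigenvalues above the fixed threshold $r\geq 0$. The two ingredients I would isolate and reuse throughout are (1) this spectral decomposition, and (2) the elementary fact that whenever $U,V\subseteq X$ are subspaces with $\dim V>\dim U$, then $V\cap U^\perp\neq\{0\}$, which I would prove by noting that the orthogonal projection $P_U|_V\colon V\to U$ cannot be injective and that its kernel is exactly $V\cap U^\perp$. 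Because $A$ is compact, every eigenspace above a fixed threshold that appears below is finite-dimensional, so all the subspaces I manipulate are finite-dimensional and the counting is legitimate.

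For part (a) I would prove the cycle (i)$\Rightarrow$(ii)$\Rightarrow$(iii)$\Rightarrow$(iv)$\Rightarrow$(i). For (i)$\Rightarrow$(ii), let $P_+$ be the orthogonal projection onto the span $V_+$ of the eigenvectors with eigenvalue $>r$, so $\dim V_+\leq d$ by (i), and set $F:=P_+(A-rI)P_+$, the finite-rank self-adjoint operator equal to $A-rI$ on $V_+$ and to $0$ on $V_+^\perp$; then $\dim\range(F)\leq d$, and $\langle(A-F)x,x\rangle$ equals $r\norm{x}^2$ on $V_+$ and $\langle Ax,x\rangle\leq r\norm{x}^2$ on $V_+^\perp$, giving the claimed bound for all $x$. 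For (ii)$\Rightarrow$(iii) I take $W:=\kernel(F)$, whose codimension equals $\dim\range(F)\leq d$ and on which $\langle Aw,w\rangle=\langle(A-F)w,w\rangle\leq r\norm{w}^2$. For (iii)$\Rightarrow$(iv) I set $V:=W^\perp$, so $\dim V=\mathrm{codim}(\overline W)\leq\mathrm{codim}(W)\leq d$, and since $x\mapsto\langle Ax,x\rangle-r\norm{x}^2$ is continuous the inequality on $W$ passes to $V^\perp=\overline W$. Finally (iv)$\Rightarrow$(i): were there at least $d+1$ eigenvalues $>r$, their span $U$ would satisfy $\langle Au,u\rangle>r\norm{u}^2$ for $0\neq u\in U$ and have $\dim U=d+1>d\geq\dim V$, so ingredient (2) yields a nonzero $w\in U\cap V^\perp$ contradicting (iv).

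For part (b) the implication (i)$\Rightarrow$(ii) is immediate: I take $V$ to be the span of $d$ eigenvectors with eigenvalue $>r$, on which $\langle Av,v\rangle=\sum\lambda_i\abs{c_i}^2>r\norm{v}^2$ for every $v\neq 0$ by strictness of $\lambda_i>r$. For (ii)$\Rightarrow$(i) I argue by contraposition: if $A$ had at most $d-1$ eigenvalues $>r$, the span $U$ of the corresponding eigenvectors has $\dim U\leq d-1$, and on the $A$-invariant subspace $U^\perp$ one has $\langle Ax,x\rangle\leq r\norm{x}^2$; since $\dim V\geq d>\dim U$, ingredient (2) produces a nonzero $w\in V\cap U^\perp$ satisfying both $\langle Aw,w\rangle>r\norm{w}^2$ and $\langle Aw,w\rangle\leq r\norm{w}^2$, a contradiction.

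The explicit computations with $F$ and the spectral sums are routine. The step I expect to need the most care is the infinite-dimensional bookkeeping: making the dimension/codimension counting rigorous (settled uniformly by the projection-kernel fact in ingredient (2)), and the closure argument in (iii)$\Rightarrow$(iv), where $W$ is a priori only a possibly non-closed subspace of finite codimension, so I must pass to $\overline W$ and invoke continuity of the quadratic form to preserve the inequality. Compactness of $A$ enters only to guarantee the spectral decomposition and the finite-dimensionality of the eigenspaces above each threshold used, which is exactly what the arguments require.
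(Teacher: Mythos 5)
Your proof is correct, and its skeleton --- the cycle (i)$\Rightarrow$(ii)$\Rightarrow$(iii)$\Rightarrow$(iv)$\Rightarrow$(i) with $W:=\kernel(F)$ and $V:=W^\perp$, plus the analogous treatment of (b) --- matches the paper's. The genuine difference lies in the closing implications: where the paper invokes the Courant--Fischer--Weyl min-max principle (with a citation) both for (iv)$\Rightarrow$(i) and for (b)(ii)$\Rightarrow$(i), you replace it by the elementary fact that $\dim V>\dim U$ forces $V\cap U^\perp\neq\{0\}$, proved via the kernel of the orthogonal projection $P_U|_V$. Since min-max is itself established by exactly this sort of dimension counting, you are effectively inlining the one half of it that is needed; this makes the argument self-contained and elementary at the cost of a few extra lines, whereas the paper buys brevity by citing a standard result. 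Two smaller deviations are worth noting. First, your $F:=P_+(A-rI)P_+$ differs from the paper's truncated spectral sum $F:x\mapsto\sum_{k=1}^d \lambda_k v_k\langle v_k,x\rangle$; both work, but your verification implicitly uses that $V_+$ reduces $A$ (so cross terms vanish in $\langle (A-F)x,x\rangle$) and that $r\geq 0$ to control the quadratic form on $V_+^\perp$ including the kernel of $A$ --- the same hypothesis the paper's version needs, so no loss. Second, in (iii)$\Rightarrow$(iv) you explicitly address that $W$ need not be closed, passing to $V^\perp=\overline{W}$ and extending the inequality by continuity of $x\mapsto\langle Ax,x\rangle-r\norm{x}^2$; the paper's one-line ``set $V:=W^\perp$'' silently glosses over this point, so your treatment is added rigor rather than a detour.
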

\begin{proof}
\begin{enumerate}[(a)]
\item We start by showing that (i) implies (ii). Let $A$ have at most $d$
eigenvalues larger than $r\geq 0$.
Let $(\lambda_k)_{k\in \N}$ be the non-zero eigenvalues of $A$, ordered in such
a way that $\lambda_k\leq r$ for $k>d$. 
Let $\kernel(A)$ denote the kernel of $A$ and let $(v_k)_{k\in \N}\in X$ be a 
sequence of corresponding eigenvectors forming an orthonormal basis of $\kernel(A)^\perp$. Then
\[
Ax=\sum_{k=1}^\infty \lambda_k v_k \langle v_k, x\rangle \quad \text{ for all } x\in X,
\]
and (ii) follows with $F:\ X\to X$ defined by
\[
F:\ x\mapsto \sum_{k=1}^d \lambda_k v_k \langle v_k, x\rangle \quad \text{ for all } x\in X.
\]

The implication from (ii) to (iii) follows by setting $W:=\kernel(F)$ since 
\[
\mathrm{codim}(W)=\dim(W^\perp)=\dim(\range(F))\leq d
\]
and 
\[
\langle Aw,w\rangle = \langle (A - F)w,w\rangle \geq 0.
\]

(iii) implies (iv) by setting $V:=W^\perp$.

To show that (iv) implies (i), we assume that (i) is not true, so that $A$ has at least $d+1$ eigenvalues larger than $r\geq 0$.
We sort the positive eigenvalues of $A$ in decreasing order to obtain
\[
\lambda_1\geq \cdots \geq \lambda_d\geq \lambda_{d+1}>r.
\]
Then, by the Courant-Fischer-Weyl min-max principle, (see, e.g. \cite[p.\ 318]{PL})
we have that the minimum over all $d$-dimensional subspaces $V\subset X$ must fulfill

\[
\min_{V\subset X \atop \dim(V)=d} \max_{\ v\in V^\perp \atop \norm{v}=1} \langle A v,v\rangle=\lambda_{d+1}>r,
\]
which shows that (iv) cannot be true. Hence, (iv) implies (i). 
\item can be shown analogously to (a). (ii) follows from (i) by choosing $V$ as the sum of eigenspaces for eigenvalues larger than $r$, 
and (ii) implies (i) by using the Courant-Fischer-Weyl min-max principle. 
\end{enumerate}
\end{proof}

\begin{corollary}\label{cor_order_characterization}
Let $A,B:\ X\to X$ be two self-adjoint compact linear operators on a Hilbert space $X$ with inner product $\langle \cdot, \cdot\rangle$. For any number $d\in \N_0$, the following statements are equivalent:
\begin{enumerate}[(a)]
\item $A\leq_{d} B$.
\item There exists a compact self-adjoint operator $F:\ X\to X$ with 
\[
\dim( \range(F))\leq d,\quad \text{ and } \quad 
\langle (B-A +F) x,x\rangle \geq 0 \quad \text{ for all } x\in X.
\]
\item There exists a subspace $W\subset X$ with $\mathrm{codim}(W)\leq d$ such that
\[
\langle (B-A)w,w\rangle \geq 0 \quad \text{ for all } w\in W.
\] 
\item There exists a subspace $V\subset X$ with $\dim (V)\leq d$ such that
\[
\langle (B-A)v,v\rangle \geq 0 \quad \text{ for all } v\in V^\perp.
\]
\end{enumerate}
\end{corollary}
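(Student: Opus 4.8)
The plan is to reduce the entire corollary to Lemma~\ref{lemma_eigenvalues_quadform}(a) applied to a single operator, so that no argument beyond tracking signs is required. The key observation is that the four characterizations in the corollary are exactly the four items of that lemma for the operator $A-B$ and the threshold $r=0$.

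First I would note that, by definition, $A\leq_d B$ means that $B-A$ has at most $d$ negative eigenvalues. Since $A-B$ is again a compact self-adjoint operator and its eigenvalues are the negatives of those of $B-A$, this is equivalent to the statement that $A-B$ has at most $d$ eigenvalues larger than $0$. Hence corollary~(a) is precisely item~(i) of Lemma~\ref{lemma_eigenvalues_quadform}(a) for the choice $A-B$ and $r=0$.

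Next I would apply Lemma~\ref{lemma_eigenvalues_quadform}(a) with these choices and translate each of its remaining items. Item~(ii) of the lemma yields a compact self-adjoint $F$ with $\dim(\range(F))\leq d$ and $\langle((A-B)-F)x,x\rangle\leq 0$ for all $x$; rearranging this as $\langle(B-A+F)x,x\rangle\geq 0$ gives corollary~(b) with the same operator $F$. Item~(iii) gives a subspace $W$ with $\mathrm{codim}(W)\leq d$ and $\langle(A-B)w,w\rangle\leq 0$ for all $w\in W$, which upon multiplication by $-1$ is exactly corollary~(c). Likewise item~(iv) produces a subspace $V$ with $\dim(V)\leq d$ and $\langle(A-B)v,v\rangle\leq 0$ for all $v\in V^\perp$, i.e.\ $\langle(B-A)v,v\rangle\geq 0$, which is corollary~(d). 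Since the lemma asserts the mutual equivalence of its four items, the four statements of the corollary are equivalent.

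I do not expect any genuine obstacle here: all the substance lives in the Courant--Fischer--Weyl min-max analysis already carried out in Lemma~\ref{lemma_eigenvalues_quadform}. The only points to be careful about are the sign flip relating ``negative eigenvalues of $B-A$'' to ``eigenvalues of $A-B$ above $r=0$,'' and the trivial verification that $A-B$ is still compact and self-adjoint as a difference of such operators, so that the lemma indeed applies.
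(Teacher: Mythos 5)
Your proposal is correct and is exactly the paper's proof: the authors likewise deduce the corollary by applying Lemma~\ref{lemma_eigenvalues_quadform}(a) with $r=0$ and $A$ replaced by $A-B$, the only work being the sign translation you carry out explicitly.
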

\begin{proof}
This follows from lemma \ref{lemma_eigenvalues_quadform}(a) with $r=0$ and $A$ replaced by $A-B$.
\end{proof}


\begin{lemma}\label{lemma:order_transitive}
Let $A,B,C:\ X\to X$ be self-adjoint compact linear operators on a Hilbert space $X$.
For $d_1,d_2\in \N_0$
\[
A\leq_{d_1} B \quad \text{ and } \quad B\leq_{d_2} C \quad \text{ implies } \quad A\leq_{d_1+d_2} C,
\]
and
\[
A\leq_\text{fin} B \quad \text{ and } \quad B\leq_\text{fin} C \quad \text{ implies } \quad A\leq_\text{fin} C.
\]
\end{lemma}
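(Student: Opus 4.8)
The plan is to reduce both hypotheses to a common algebraic form via Corollary~\ref{cor_order_characterization}, add them, and read off the conclusion from the same corollary, thereby avoiding any direct manipulation of eigenvalues. I would use characterization (b): the hypothesis $A\leq_{d_1} B$ is equivalent to the existence of a compact self-adjoint operator $F_1\colon X\to X$ with $\dim(\range(F_1))\leq d_1$ and $\langle (B-A+F_1)x,x\rangle\geq 0$ for all $x\in X$, and likewise $B\leq_{d_2} C$ supplies a compact self-adjoint $F_2$ with $\dim(\range(F_2))\leq d_2$ and $\langle (C-B+F_2)x,x\rangle\geq 0$ for all $x\in X$.

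First I would set $F:=F_1+F_2$, which is again compact and self-adjoint, and add the two nonnegativity statements termwise:
\[
\langle (C-A+F)x,x\rangle = \langle (B-A+F_1)x,x\rangle + \langle (C-B+F_2)x,x\rangle \geq 0 \quad\text{for all } x\in X.
\]
It then remains only to bound the rank of $F$. Since $\range(F_1+F_2)\subseteq \range(F_1)+\range(F_2)$, we get $\dim(\range(F))\leq d_1+d_2$, and Corollary~\ref{cor_order_characterization} in the direction (b)$\Rightarrow$(a) yields $A\leq_{d_1+d_2} C$. The second assertion is then immediate: if $A\leq_\text{fin} B$ and $B\leq_\text{fin} C$, choose $d_1,d_2\in\N_0$ with $A\leq_{d_1} B$ and $B\leq_{d_2} C$; the first part gives $A\leq_{d_1+d_2} C$, hence in particular $A\leq_\text{fin} C$.

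An equivalent route uses the subspace characterization (c): pick $W_1,W_2\subset X$ with $\mathrm{codim}(W_i)\leq d_i$ on which $B-A$, resp.\ $C-B$, is nonnegative, and work on $W:=W_1\cap W_2$, where the quadratic form of $C-A$ splits additively into two nonnegative pieces. The only point requiring care is the codimension count $\mathrm{codim}(W_1\cap W_2)\leq \mathrm{codim}(W_1)+\mathrm{codim}(W_2)$, which holds because $(W_1\cap W_2)^\perp=W_1^\perp+W_2^\perp$ and the latter is a sum of finite-dimensional, hence closed, subspaces. This sub-additivity of codimension (equivalently, the rank estimate for $F_1+F_2$) is really the crux of the argument; everything else is just an additive splitting of the quadratic form, and no min-max or Courant--Fischer input is needed once the characterization from Corollary~\ref{cor_order_characterization} is in hand.
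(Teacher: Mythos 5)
Your proof is correct and follows exactly the route the paper takes: the paper's proof is the one-line remark that the lemma ``follows from the characterization in corollary~\ref{cor_order_characterization}(b)'', and your argument (sum $F_1+F_2$, bound $\dim(\range(F_1+F_2))\leq d_1+d_2$, apply (b)$\Rightarrow$(a)) is precisely the intended expansion of that remark. The alternative route via characterization (c) and codimension subadditivity that you sketch is also valid, but no further commentary is needed.
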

\begin{proof}
This follows from the characterization in corollary \ref{cor_order_characterization}(b). 
\end{proof}

\subsection{A monotonicity relation for the Helmholtz equation}\label{subsect:monotonicity}

With this new ordering, we can show a monotonicity relation between the
scattering index and the Neumann-to-Dirichlet-operators. Note that the dimension bound in the last line of the following theorem has recently been improved to $d(q_2)-d(q_1)$ in \cite{harrach2019dimension}.
\begin{theorem}\label{thm:monotonicity}
Let $q_1,q_2\in L^\infty(\Omega) \setminus \{0\}$. 
Assume that $k>0$ is not a resonance for $q_1$ or $q_2$, and let $d(q_2)\in \N_0$ be defined 
as in lemma \ref{lemma:d_q}.

Then there exists a subspace $V\subset L^2(\Sigma)$ with $\dim(V)\leq d(q_2)$ such that
\[
\int_{\Sigma} g \left( \Lambda(q_2) - \Lambda(q_1)\right) g \dx[s] \geq
\int_\Omega  k^2 (q_2-q_1) |u^{(g)}_1|^2 \dx
\quad \text{ for all } g\in V^\perp.
\]
In particular
\[
q_1\leq q_2 \quad \text{ implies } \quad \Lambda(q_1)\leq_{d(q_2)} \Lambda(q_2).
\]
\end{theorem}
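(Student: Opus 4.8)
The plan is to reduce the whole statement to a single exact energy identity that expresses the quadratic form of $\Lambda(q_2)-\Lambda(q_1)$ in terms of $u_1^{(g)}$ and the solution difference, and then to absorb the finitely many ``bad'' directions by imposing a finite-codimension constraint on the boundary data $g$. I fix real data $g\in L^2(\Sigma)$ (so that by uniqueness the solutions $u_i:=u_i^{(g)}$ of \req{Helmholtz} with coefficient $q_i$ are real-valued) and introduce the symmetric bilinear forms $B_i(u,v):=\int_\Omega\big(\nabla u\cdot\nabla v-k^2 q_i\, u v\big)\dx$. The variational formulation \req{Varform} reads $B_i(u_i,v)=\int_\Sigma g\,\gamma_\Sigma(v)\dx[s]$ for all $v\in H^1(\Omega)$; testing against $u_1$ and $u_2$ and using $\int_\Sigma g\, u_i|_\Sigma\dx[s]=\int_\Sigma g\,\Lambda(q_i)g\dx[s]$ produces the four scalar products $B_i(u_i,u_j)$. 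Setting $w:=u_1-u_2$ and expanding $B_2(w,w)=B_2(u_1,u_1)-2B_2(u_1,u_2)+B_2(u_2,u_2)$, the cross terms collapse by symmetry of $B_2$, and the only coefficient mismatch appears through $B_2(u_1,u_1)=B_1(u_1,u_1)-k^2\int_\Omega (q_2-q_1)|u_1|^2\dx$. I expect this bookkeeping to yield, for every $g$, the identity
\[
\int_\Sigma g\big(\Lambda(q_2)-\Lambda(q_1)\big)g\dx[s]=\int_\Omega k^2(q_2-q_1)|u_1|^2\dx+\int_\Omega\big(|\nabla w|^2-k^2 q_2|w|^2\big)\dx ,
\]
where the last integral is exactly $B_2(w,w)$.

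With this identity in hand, the desired inequality is equivalent to the nonnegativity of $B_2(w,w)$. By lemma \ref{lemma:d_q} applied to $q_2$, the form $v\mapsto\int_\Omega\big(|\nabla v|^2-k^2 q_2|v|^2\big)\dx$ is nonnegative on $V(q_2)^\perp$, where $V(q_2)\subset H^1(\Omega)$ is the finite-dimensional sum of eigenspaces with $\dim V(q_2)=d(q_2)$. The difficulty is that $w=u_1^{(g)}-u_2^{(g)}$ depends on $g$, so I cannot simply require $w\in V(q_2)^\perp$ pointwise; instead I must find a single subspace $V\subset L^2(\Sigma)$ of dimension $\le d(q_2)$ forcing $w\in V(q_2)^\perp$ for all $g\in V^\perp$. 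The key observation is that $g\mapsto w$ is a bounded linear map $L^2(\Sigma)\to H^1(\Omega)$, so composing it with the $H^1$-orthogonal projection $P$ onto the finite-dimensional space $V(q_2)$ gives a finite-rank operator $\Phi:=P\big(u_1^{(\cdot)}-u_2^{(\cdot)}\big):L^2(\Sigma)\to V(q_2)$. Taking $V:=(\kernel\Phi)^\perp$ then yields $\dim V=\dim\range\Phi\le\dim V(q_2)=d(q_2)$, and every $g\in V^\perp=\kernel\Phi$ satisfies $Pw=0$, i.e.\ $w\in V(q_2)^\perp$, whence $B_2(w,w)\ge 0$ and the stated inequality follows on $V^\perp$.

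Finally, the ``in particular'' clause is immediate: if $q_1\le q_2$ then $k^2(q_2-q_1)|u_1|^2\ge 0$ pointwise, so the inequality forces $\int_\Sigma g\big(\Lambda(q_2)-\Lambda(q_1)\big)g\dx[s]\ge 0$ for all $g$ in the codimension-$d(q_2)$ space $V^\perp$, and corollary \ref{cor_order_characterization} (the implication (d)$\Rightarrow$(a) with $A=\Lambda(q_1)$, $B=\Lambda(q_2)$, and $d=d(q_2)$) gives precisely $\Lambda(q_1)\le_{d(q_2)}\Lambda(q_2)$. I expect the main obstacle to be the construction in the second paragraph, namely converting the $g$-dependent membership $w\in V(q_2)^\perp$ into one finite-codimension restriction on the data via the finite-rank map $\Phi$; the energy identity itself is a routine, if delicate, symmetric-bilinear-form computation.
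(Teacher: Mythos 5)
Your proposal is correct and follows essentially the same route as the paper: the energy identity you derive is exactly the paper's lemma~\ref{lemma:monotonicity_with_rhs}, and your finite-rank operator $\Phi=P(S_1-S_2)$ with $V:=(\kernel\Phi)^\perp$ produces the very same subspace $(S_2-S_1)^*V(q_2)$ that the paper uses in lemma~\ref{lemma:monotonicity_hilf2}, since $(\kernel\Phi)^\perp=\range(\Phi^*)=(S_1-S_2)^*V(q_2)$. The concluding appeal to corollary~\ref{cor_order_characterization} likewise matches the paper's argument.
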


\begin{remark}\label{remark:othermonotonicity}
Note that by interchanging $q_1$ and $q_2$, theorem \ref{thm:monotonicity} also
yields that there exists a subspace $V\subset L^2(\Sigma)$ with $\dim(V)\leq
d(q_1)$ such that
\[
\int_{\Sigma} g \left( \Lambda(q_2) - \Lambda(q_1)\right) g \dx[s] \leq
\int_\Omega  k^2 (q_2-q_1) |u^{(g)}_2|^2 \dx \quad \text{ for all } g\in V^\perp.
\]
\end{remark}

To prove theorem \ref{thm:monotonicity} we will use the following lemmas.

\begin{lemma}\label{lemma:monotonicity_with_rhs}
Let $q_1,q_2\in L^\infty(\Omega) \setminus \{0\}$. 
Assume that $k>0$ is not a resonance for $q_1$ or $q_2$.
Then, for all $g\in L^2(\Sigma)$,
\begin{align*}
\lefteqn{\int_{\Sigma} g \left( \Lambda(q_2) - \Lambda(q_1)\right) g \dx[s] +
\int_\Omega  k^2 (q_1-q_2) |u^{(g)}_1|^2 \dx}\\
&=\int_\Omega \left( \left| \nabla  (u^{(g)}_2-u^{(g)}_1) \right|^2 - k^2 q_2 |u^{(g)}_2-u^{(g)}_1|^2 \right) \dx.
\end{align*}
where $u^{(g)}_1$, resp., $u^{(g)}_2$ is the solution of the Helmholtz equation
\req{Helmholtz} with Neumann boundary data $g$ and 
$q=q_1$, resp., $q=q_2$.
\end{lemma}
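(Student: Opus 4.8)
The plan is to prove Lemma~\ref{lemma:monotonicity_with_rhs} by a direct energy computation, exploiting the variational characterization of the solutions $u_1^{(g)}$ and $u_2^{(g)}$ together with the symmetry of the Neumann-to-Dirichlet operator. First I would record that, by definition of $\Lambda(q)$ and the weak form \req{Varform}, the boundary pairing can be turned into a volume integral. Writing $u_i := u_i^{(g)}$, testing the variational formulation for $u_i$ against an arbitrary $v \in H^1(\Omega)$ gives
\[
\int_\Omega \left( \nabla u_i \cdot \nabla v - k^2 q_i u_i v \right)\dx = \int_\Sigma g\, \gamma_\Sigma(v)\dx[s],
\]
and in particular, taking $v = u_i$ and recalling $\Lambda(q_i)g = \gamma_\Sigma(u_i)$, we get the key identity
\[
\int_\Sigma g\, \Lambda(q_i) g\dx[s] = \int_\Omega \left( |\nabla u_i|^2 - k^2 q_i |u_i|^2 \right)\dx.
\]

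Next I would form the difference of the two energies, so that the left-hand side of the claimed identity becomes
\[
\int_\Omega \left( |\nabla u_2|^2 - k^2 q_2 |u_2|^2 - |\nabla u_1|^2 + k^2 q_1 |u_1|^2 \right)\dx + \int_\Omega k^2(q_1 - q_2)|u_1|^2\dx.
\]
The aim is to reorganize this into the perfect-square form $\int_\Omega \left( |\nabla(u_2-u_1)|^2 - k^2 q_2 |u_2-u_1|^2 \right)\dx$ appearing on the right. Expanding that target expression and comparing, the discrepancy is a collection of cross terms in $u_1$ and $u_2$, specifically terms of the form $\nabla u_1 \cdot \nabla u_2$ and $k^2 q_2 u_1 u_2$. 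The heart of the matter is to show these cross terms cancel, and this is precisely where I would invoke the weak formulations: using $v = u_2$ in the equation for $u_1$ and $v = u_1$ in the equation for $u_2$, and subtracting, eliminates the boundary contributions (both equal $\int_\Sigma g\,\gamma_\Sigma(\cdot)\dx[s]$ with the two traces) and relates the mixed gradient term to the mixed zeroth-order terms with coefficients $q_1$ and $q_2$.

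The main obstacle, and the step requiring genuine care, is the bookkeeping of these cross terms: one must choose the right test functions in the right equations and track which coefficient ($q_1$ or $q_2$) multiplies each product so that everything telescopes into the stated identity. Concretely, I expect that after substituting the two weak identities one finds
\[
\int_\Omega \nabla u_1 \cdot \nabla u_2\dx = \int_\Sigma g\,\gamma_\Sigma(u_2)\dx[s] + \int_\Omega k^2 q_1 u_1 u_2\dx,
\]
and symmetrically with indices swapped, and that feeding both into the expanded difference produces exactly the right-hand side after the boundary terms reassemble into $\int_\Sigma g(\Lambda(q_2)-\Lambda(q_1))g\dx[s]$. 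Everything here is a finite sequence of substitutions with no analytic subtlety, since the non-resonance hypothesis guarantees $u_1, u_2 \in H^1(\Omega)$ exist and the integrals are well defined; the only real work is the algebraic rearrangement, so I would present it cleanly as a chain of displayed equalities rather than grinding through every term verbosely.
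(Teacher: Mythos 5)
Your proposal is correct and follows essentially the same route as the paper: both convert the boundary pairings into volume energies via the variational formulation (testing the equation for $u_i^{(g)}$ with $u_i^{(g)}$ and, for the cross terms, testing the $q_2$-equation with $u_1^{(g)}$, which yields $\int_\Omega \bigl(\nabla u_2^{(g)}\cdot\nabla u_1^{(g)} - k^2 q_2\, u_2^{(g)} u_1^{(g)}\bigr)\dx = \int_\Sigma g\,\Lambda(q_1)g\dx[s]$), then expand the quadratic form and rewrite $\int_\Omega\bigl(|\nabla u_1^{(g)}|^2 - k^2 q_2 |u_1^{(g)}|^2\bigr)\dx$ by adding and subtracting the $q_1$-term. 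The only caution is that of your two symmetric cross-term identities it is specifically the $q_2$-weighted one that matches the cross term in the expanded square (the $q_1$-weighted version would leave a stray mixed term $k^2(q_1-q_2)u_1^{(g)}u_2^{(g)}$), exactly as in the paper's computation.
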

\begin{proof}
Define the bilinear form 
\[
B_q(u, v) = \int_{\Omega} \left( \nabla u \cdot \nabla v - k^2 q u v \right) \dx, \qquad u, v \in H^1(\Omega).
\]
Writing $u_1 = u_1^{(g)}$ and $u_2 = u_2^{(g)}$, from the definition of the NtD map and from \req{Varform} we have 
\begin{align*}
\int_{\Sigma} g \Lambda(q_1) g \dx[s] &= \int_{\Sigma} (\partial_{\nu} u_1) u_1 \dx[s] = 2 \int_{\Sigma}(\partial_{\nu} u_2) u_1 \dx[s] - \int_{\Sigma} (\partial_{\nu} u_1) u_1 \dx[s] \\
 &= 2 B_{q_2}(u_2,u_1) - B_{q_1}(u_1,u_1)
\end{align*}
and 
\begin{align*}
\int_{\Sigma} g \Lambda(q_2) g \dx[s] &= \int_{\Sigma} (\partial_{\nu} u_2) u_2 \dx[s] = B_{q_2}(u_2,u_2).
\end{align*}

We thus obtain that 
\begin{align*}
\int_{\Sigma} g \left( \Lambda(q_2) - \Lambda(q_1)\right) g \dx[s] &= B_{q_2}(u_2,u_2) - 2 B_{q_2}(u_2,u_1) + B_{q_1}(u_1,u_1) \\
 &= B_{q_2}(u_2-u_1, u_2-u_1) - B_{q_2}(u_1,u_1) + B_{q_1}(u_1, u_1).
\end{align*}
This shows the assertion.
\end{proof}

We will show that the bilinear forms in the right hand sides in lemma~\ref{lemma:monotonicity_with_rhs} 
are positive up to a finite dimensional subspace.

\begin{lemma}\label{lemma:monotonicity_hilf2}
Let $q_1,q_2\in L^\infty(\Omega)\setminus \{0\}$ for which $k>0$ is not a resonance.
There exists a subspace $V\subset L^2(\Sigma)$ with $\dim(V)\leq d(q_2)$ such that 
for all $g\in V^\perp$
\[
\int_\Omega \left( \left| \nabla  (u^{(g)}_2-u^{(g)}_1) \right|^2 - k^2 q_2 |u^{(g)}_2-u^{(g)}_1|^2 \right) \dx\geq 0.
\]
\end{lemma}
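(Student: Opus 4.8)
The plan is to recognize the integrand as the $H^1(\Omega)$ quadratic form associated with the operator $I-K-k^2 K_{q_2}$, and then to invoke lemma~\ref{lemma:d_q}, which asserts precisely that this form is nonnegative on the orthogonal complement of the finite-dimensional space $V(q_2)$. Writing $w:=u^{(g)}_2-u^{(g)}_1\in H^1(\Omega)$, I would use the identity
\[
\int_\Omega \left( |\nabla w|^2 - k^2 q_2 |w|^2 \right) \dx = \langle (I-K-k^2 K_{q_2}) w, w\rangle
\]
(already noted in the proof of lemma~\ref{lemma:d_q}) to reduce the claim to the following: it suffices to find a subspace $V\subset L^2(\Sigma)$ with $\dim(V)\leq d(q_2)$ such that $w\in V(q_2)^\perp$ for every $g\in V^\perp$, since then the right-hand side above is $\geq 0$ by lemma~\ref{lemma:d_q}.

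To produce such a $V$, I would study the solution-difference map $S:\ L^2(\Sigma)\to H^1(\Omega)$, $g\mapsto u^{(g)}_2-u^{(g)}_1$. By the non-resonance assumption and the operator form \req{Helmholtz_Operatorform}, one has $u^{(g)}_i=(I-K-k^2 K_{q_i})^{-1}\gamma_\Sigma^* g$, so $S$ is a bounded linear operator. Let $P:\ H^1(\Omega)\to H^1(\Omega)$ be the orthogonal projection onto $V(q_2)$, and consider the composition $PS:\ L^2(\Sigma)\to H^1(\Omega)$. Since $\range(PS)\subseteq V(q_2)$ is finite-dimensional, $PS$ has finite rank with $\mathrm{rank}(PS)\leq \dim V(q_2)=d(q_2)$.

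I would then set $W:=\kernel(PS)$ and $V:=W^\perp$. As $PS$ is bounded with finite-dimensional range, $W$ is closed, $V^\perp=W$, and by the first isomorphism theorem $\mathrm{codim}(W)=\mathrm{rank}(PS)\leq d(q_2)$, so that $\dim(V)\leq d(q_2)$. For any $g\in V^\perp=W$ we have $P(Sg)=0$, i.e.\ $w=Sg\in V(q_2)^\perp$, and hence the quadratic form is nonnegative by lemma~\ref{lemma:d_q}, which is exactly the assertion.

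The construction is essentially forced once the problem is rephrased through the projection $PS$, so I do not expect a serious obstacle. The only point requiring a little care is the finite-rank bookkeeping, namely the equality $\mathrm{codim}(\kernel(PS))=\mathrm{rank}(PS)$ that ties the codimension of the good subspace to $d(q_2)$; this is immediate here because $PS$ has finite-dimensional range, but it is the step on which the sharp bound $\dim(V)\leq d(q_2)$ rests, and it is worth checking that the bounded solution operator $S$ is genuinely well-defined, which is where the non-resonance hypothesis enters.
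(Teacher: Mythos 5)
Your proof is correct and essentially the same as the paper's: the paper likewise reduces the claim to lemma~\ref{lemma:d_q} via the solution-difference operator $S_2-S_1$ and sets $V:=(S_2-S_1)^*V(q_2)$, which coincides with your $V=\kernel(PS)^\perp$ because $\kernel(PS)=\bigl((S_2-S_1)^*V(q_2)\bigr)^\perp$. Your detour through the projection $PS$ and the rank--codimension bookkeeping is merely a more explicit packaging of the identical construction and bound $\dim(V)\leq d(q_2)$.
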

\begin{proof}
Using lemma \ref{lemma:d_q}, we have that 
\begin{align*}
\int_\Omega \left( \left| \nabla  (u^{(g)}_2-u^{(g)}_1) \right|^2 - k^2 q_2 |u^{(g)}_2-u^{(g)}_1|^2 \right)\dx
 \geq 0
\end{align*}
for all $g\in L^2(\Sigma)$ with $u^{(g)}_2-u^{(g)}_1\in V(q_2)^\perp$.
The solution operators 
\[
S_j:\ L^2(\Sigma)\to H^1(\Omega),\quad g\mapsto u_j^{(g)}, \quad \text{ where } u_j^{(g)}\in H^1(\Omega) \text{ solves \req{Helmholtz},}
\quad j\in \{1,2\},
\]
are linear and bounded, and 
\[
(S_2-S_1)g=u^{(g)}_2-u^{(g)}_1\in V(q_2)^\perp \quad \text{ if and only if } \quad g\in \left( (S_2-S_1)^* V(q_2)\right)^\perp.
\]
Since $\dim(S_2-S_1)^* V(q_2)\leq \dim V(q_2)=d(q_2)$, the assertion follows with 
$V:=(S_2-S_1)^* V(q_2)$.
\end{proof}

\emph{Proof of theorem \ref{thm:monotonicity}.}\
The assertion of theorem~\ref{thm:monotonicity} now immediately follows from combining lemma~\ref{lemma:monotonicity_with_rhs} and lemma~\ref{lemma:monotonicity_hilf2}.
\hfill $\Box$

\subsection{The number of negative eigenvalues}

We will now further investigate the number $d(q)\in \N_0$ (defined in lemma \ref{lemma:d_q}) 
that bounds the number of negative eigenvalues in the monotonicity relations derived in subsection \ref{subsect:monotonicity}. 
We will show that $d(q)$ depends monotonously on the scattering index $q$, and show that $d(q)$ is less or equal than
the number of Neumann eigenvalues for the Laplacian which are larger than $-k^2q_\text{max}$, where $q_\text{max}\geq q(x)$ for all $x\in \Omega$ (a.e.)

\begin{lemma}\label{lemma_monotonicity_q}
Let $q_1,q_2\in L^\infty(\Omega)$, then $q_1\leq q_2$ implies $d(q_1)\leq d(q_2)$.
\end{lemma}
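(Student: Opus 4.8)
The plan is to reduce the statement to the quadratic-form characterization of eigenvalue counts in lemma~\ref{lemma_eigenvalues_quadform}(b). First I would record the elementary identity that for every $v\in H^1(\Omega)$,
\[
\langle (K+k^2 K_q)v,v\rangle = \int_\Omega (1+k^2 q)|v|^2\dx,
\]
which follows at once from the definitions $K=j^*j$ and $K_q=j^*M_q j$, giving $\langle Kv,v\rangle=\int_\Omega |v|^2\dx$ and $\langle K_q v,v\rangle=\int_\Omega q|v|^2\dx$. Since $k>0$, the pointwise hypothesis $q_1\leq q_2$ yields $(1+k^2 q_1)|v|^2\leq (1+k^2 q_2)|v|^2$ almost everywhere, and integrating produces the Loewner ordering
\[
\langle (K+k^2 K_{q_1})v,v\rangle \leq \langle (K+k^2 K_{q_2})v,v\rangle \quad\text{for all } v\in H^1(\Omega).
\]

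Next I would translate this ordering into a statement about eigenvalue counts. Writing $A_j:=K+k^2 K_{q_j}$, each $A_j$ is compact and self-adjoint, and by definition $d(q_j)$ is the number of its eigenvalues exceeding $1$. Applying the implication (i)$\Rightarrow$(ii) of lemma~\ref{lemma_eigenvalues_quadform}(b) to $A_1$ with $r=1$ and $d=d(q_1)$ produces a subspace $V\subset H^1(\Omega)$ with $\dim V\geq d(q_1)$ on which $\langle A_1 v,v\rangle > \norm{v}^2$ holds for all nonzero $v\in V$; concretely one may take $V$ to be the span of the eigenspaces of $A_1$ associated with eigenvalues larger than $1$.

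Finally I would feed this same subspace back into the lemma, now for $A_2$. Because $A_1\leq A_2$ in the sense established above, every nonzero $v\in V$ satisfies $\langle A_2 v,v\rangle\geq \langle A_1 v,v\rangle > \norm{v}^2$, so $V$ also witnesses, via the implication (ii)$\Rightarrow$(i) of lemma~\ref{lemma_eigenvalues_quadform}(b), that $A_2$ has at least $\dim V\geq d(q_1)$ eigenvalues larger than $1$. Hence $d(q_2)\geq d(q_1)$, as claimed. There is no genuinely hard step here; the only point that requires a little care is that the comparison $\langle A_2 v,v\rangle>\norm{v}^2$ must remain \emph{strict} when passing from $A_1$ to $A_2$, which is precisely why the strict form of lemma~\ref{lemma_eigenvalues_quadform}(b) is the right tool rather than the non-strict part~(a).
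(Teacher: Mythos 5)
Your proof is correct, and it rests on the same two ingredients as the paper's own (two-line) proof: the operator ordering $K+k^2K_{q_1}\leq K+k^2K_{q_2}$ induced by $q_1\leq q_2$, followed by the quadratic-form characterization of eigenvalue counts. The difference is which half of that characterization you use. The paper bounds $d(q_1)$ from \emph{above}: it invokes the non-strict, codimension form (it cites the equivalence of (a) and (c) in corollary~\ref{cor_order_characterization}, although since the relevant threshold here is $r=1$ rather than $r=0$, and $I-K-k^2K_q$ is not compact, the precise tool is really lemma~\ref{lemma_eigenvalues_quadform}(a) with $r=1$: take a subspace $W$ of codimension at most $d(q_2)$ on which $\langle (K+k^2K_{q_2})w,w\rangle\leq\norm{w}^2$, observe that the same bound then holds for $q_1$, and conclude). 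You instead bound $d(q_2)$ from \emph{below}: you take $V=V(q_1)$, the sum of eigenspaces of $K+k^2K_{q_1}$ for eigenvalues exceeding $1$, push the strict inequality $\langle (K+k^2K_{q_1})v,v\rangle>\norm{v}^2$ through the ordering, and apply lemma~\ref{lemma_eigenvalues_quadform}(b). The two routes are min-max duals of equal strength; yours has the minor advantages of making the ordering explicit via the identity $\langle (K+k^2K_q)v,v\rangle=\int_\Omega(1+k^2q)\abs{v}^2\dx$, which the paper asserts without comment, and of invoking the characterization at exactly the threshold $r=1$ that the definition of $d(q)$ requires. Your closing remark is also apt: in your direction the strictness of the inequality on $V$ is what part (b) needs, and it survives the passage from $K+k^2K_{q_1}$ to $K+k^2K_{q_2}$ because a non-strict ordering added to a strict inequality remains strict (for nonzero $v$, which is clearly the intended reading of lemma~\ref{lemma_eigenvalues_quadform}(b)(ii)).
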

\begin{proof}
$q_1\leq q_2$ implies that $K_{q_1}\leq K_{q_2}$. Hence, the assertion follows
from the equivalence of (a) and (c) in
corollary ~\ref{cor_order_characterization}.
\end{proof}


\begin{lemma}\label{lemma:Neumann_EV}
Let $q \in L^{\infty}(\Omega)$, and $k\in \R$.
\begin{enumerate}[(a)]
\item There is a countable and discrete set of real values
\[
\lambda_1 \geq \lambda_2 \geq \lambda_3 \ldots \to -\infty,
\]
(called \emph{Neumann eigenvalues}) so that
\begin{equation} \labeq{neumann_eigenvalue}
(\Delta + k^2q)u = \lambda u \text{ in $\Omega$}, \qquad \partial_{\nu} u|_{\partial \Omega} = 0,
\end{equation}
admits a non-trivial solution (called \emph{Neumann eigenfunction}) $0\not\equiv u\in H^1(\Omega)$ if and only if $\lambda\in \{ \lambda_1,\lambda_2,\ldots \}$, and there is an orthonormal basis 
$(u_1,u_2,\ldots)$ of $L^2(\Omega)$, so that $u_j\in H^1(\Omega)$ is a Neumann eigenfunction for $\lambda_j$.
\item If $\lambda$ is not a Neumann eigenvalue, then the problem 
\begin{equation} \labeq{neumann_problem_eigenvalue_f_g}
(\Delta + k^2q)u = \lambda u + F \text{ in $\Omega$}, \qquad \partial_{\nu} u|_{\partial \Omega} = g,
\end{equation}
has a unique solution $u \in H^1(\Omega)$ for any $F \in L^2(\Omega)$ and $g
\in L^2(\partial \Omega)$, and the solution operator is linear and bounded. 
\item Let $N_+ := \spn \{ u_j:\ \lambda_j > 0 \}$. Then $\dim(N_+)<\infty$, 
\begin{equation}\labeq{Neumann_EV_Def_Nminus}
N_-:=\overline{\spn \{ u_j:\ \lambda_j \leq 0 \}}=\{v\in H^1(\Omega):\ v\perp_{L^2} N_+\}
\end{equation}
is a complement of $N_+$ (in $H^1(\Omega)$), and
\begin{align}
\labeq{Neumann_EV_N_plus} \int_{\Omega} |\nabla v|^2 - k^2 q v^2 \dx[x] < 0 \quad \text{ for all } v\in N_+,\\
\labeq{Neumann_EV_N_minus} \int_{\Omega} |\nabla v|^2 - k^2 q v^2 \dx[x] \geq 0 \quad \text{ for all } v\in N_-,
\end{align}
where the closure in \req{Neumann_EV_Def_Nminus} is taken with respect to the $H^1(\Omega)$-norm, 
and $\perp_{L^2}$ denotes orthogonality with respect to the $L^2$ inner product.
\item $d(q)$ is the number of positive Neumann eigenvalues of $\Delta+k^2q$, i.e., $d(q)=\dim(N_+)$.
\item $0$ is a Neumann eigenvalue if and only if $k>0$ is a resonance frequency.
\end{enumerate}
\end{lemma}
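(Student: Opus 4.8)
\emph{The plan is to} reduce the Neumann eigenvalue problem to a standard compact self-adjoint eigenvalue problem by a coercive shift, and then read off (a)--(e). Fix $t>k^2\|q\|_{L^\infty(\Omega)}$ so that
\[
b(u,v):=\int_\Omega\left(\nabla u\cdot\nabla v-k^2 q\,uv+t\,uv\right)\dx
\]
is a bounded, symmetric, coercive bilinear form on $H^1(\Omega)$, i.e.\ an inner product equivalent to the standard one. By Lax--Milgram, for $f\in L^2(\Omega)$ there is a unique $Gf\in H^1(\Omega)$ with $b(Gf,v)=\int_\Omega fv\dx$ for all $v$; composed with the compact embedding $j$, the operator $G\colon L^2(\Omega)\to L^2(\Omega)$ is compact (by compactness of $j$), self-adjoint (by symmetry of $b$) and injective (if $Gf=0$ then $\int fv=0$ for all $v$). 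The weak form of \req{neumann_eigenvalue} reads $b(u,v)=(t-\lambda)\int_\Omega uv\dx$, so $u$ solves \req{neumann_eigenvalue} for $\lambda$ iff $Gu=(t-\lambda)^{-1}u$; moreover $b(u,u)>0$ forces every Neumann eigenvalue to satisfy $\lambda<t$. The spectral theorem for $G$ then supplies eigenvalues $\mu_1\geq\mu_2\geq\cdots\to 0^+$ with finite-dimensional eigenspaces and an $L^2$-orthonormal eigenbasis $(u_j)$ (complete since $G$ is injective); setting $\lambda_j:=t-\mu_j^{-1}$ gives the discrete sequence $\lambda_j\to-\infty$ and proves (a).

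For (b), the weak form of \req{neumann_problem_eigenvalue_f_g} together with the operator identities used in \req{Helmholtz_Operatorform} becomes $(I-(1-\lambda)K-k^2K_q)u=-j^*F+\gamma_{\partial\Omega}^*g$. Since $K$ and $K_q$ are compact, this operator is Fredholm of index $0$, hence invertible iff injective; injectivity is exactly the statement that $\lambda$ is not a Neumann eigenvalue, yielding a unique solution and a bounded solution operator, precisely as in the proof of lemma~\ref{lemma:resonances}(a). Part (e) is then immediate: taking $\lambda=0$ in (a), $0$ is a Neumann eigenvalue iff \req{Helmholtz_homogeneous} has a nontrivial solution, which by lemma~\ref{lemma:resonances}(a) is equivalent to $k$ being a resonance frequency.

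For (c), only finitely many $\lambda_j$ exceed $0$, so $\dim N_+<\infty$. The key point is to upgrade $L^2$-completeness of $(u_j)$ to an $H^1$-convergent expansion: since $b(u_i,u_j)=(t-\lambda_j)\delta_{ij}$, the $u_j$ are $b$-orthogonal, and they are $b$-complete in $H^1(\Omega)$ because $b(w,u_j)=0$ for all $j$ forces (using $t-\lambda_j>0$) that $\int w u_j=0$ for all $j$, hence $w=0$. Thus every $v\in H^1(\Omega)$ admits the expansion $v=\sum_j\bigl(\int_\Omega vu_j\dx\bigr)u_j$ converging \emph{in $H^1$}. This is exactly what is needed so that the $L^2$-orthogonality condition $v\perp_{L^2}N_+$ characterizes membership in the $H^1$-closure $N_-$, giving \req{Neumann_EV_Def_Nminus} and the complementarity $H^1(\Omega)=N_+\oplus N_-$. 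Testing the eigen-relation against $u_i$ gives $\int_\Omega\bigl(\nabla u_i\cdot\nabla u_j-k^2q\,u_iu_j\bigr)\dx=-\lambda_j\delta_{ij}$, so for $v=\sum_j c_ju_j$ one has $\int_\Omega(|\nabla v|^2-k^2q\,v^2)\dx=-\sum_j\lambda_jc_j^2$; this is $<0$ on $N_+$, proving \req{Neumann_EV_N_plus}, and $\geq 0$ on finite combinations in $N_-$, extended to all of $N_-$ by $H^1$-continuity of the form, proving \req{Neumann_EV_N_minus}.

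Finally, (d) follows by identifying both quantities with the Morse index of $Q(v):=\langle(I-K-k^2K_q)v,v\rangle=\int_\Omega(|\nabla v|^2-k^2q\,v^2)\dx$. By lemma~\ref{lemma:d_q} and the spectral theorem, $d(q)=\dim V(q)$ with $Q$ negative definite on $V(q)$ (eigenvalues of $K+k^2K_q$ exceeding $1$) and $Q\geq 0$ on $V(q)^\perp$, while (c) gives $Q$ negative definite on $N_+$ and $Q\geq 0$ on $N_-$. For any two splittings $H^1(\Omega)=U_i\oplus W_i$ ($i=1,2$) with $Q|_{U_i}$ negative definite, $Q|_{W_i}\geq 0$ and $U_i$ finite-dimensional, one has $U_1\cap W_2=\{0\}$, whence $\dim U_1\leq\dim U_2$, and by symmetry $\dim U_1=\dim U_2$; applying this to $(V(q),V(q)^\perp)$ and $(N_+,N_-)$ yields $d(q)=\dim V(q)=\dim N_+$. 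I expect the main obstacle to be part (c): passing from $L^2$-completeness of the eigenbasis to genuine $H^1$-convergence, so that $N_-$ really is the $L^2$-orthogonal complement of $N_+$; the equivalent $b$-inner product, in which the $u_j$ form an orthogonal basis of $H^1(\Omega)$, is what resolves it, with the Morse-index comparison in (d) being the second, more routine, subtlety.
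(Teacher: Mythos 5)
Your proof is correct, and for parts (a), (b), (d), (e) it is essentially the paper's argument in different notation: your Lax--Milgram operator $G$ is exactly the paper's $jR^{-1}j^*$ with $R=I-K-k^2K_q+cK$ (your form $b$ is $\langle R\,\cdot,\cdot\rangle_{H^1}$ with $t=c$), your Fredholm index-$0$ argument for (b) matches the paper's (your operator $I-(1-\lambda)K-k^2K_q$ carries the correct sign; the paper's ``$I-K-k^2K_q-\lambda K$'' is a sign slip), and your two-splitting comparison in (d) is a self-contained reproof of what the paper obtains by citing lemma~\ref{lemma_eigenvalues_quadform} with $r=1$, applied to $N_+$ and to $W=N_-$ using $\mathrm{codim}(N_-)=\dim(N_+)$. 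The genuine divergence is in part (c), exactly where you predicted the obstacle. The paper works with the standard $H^1$ inner product: assuming $N_-\subsetneq\tilde N_-$, it takes $0\neq v\in\tilde N_-$ that is $H^1$-orthogonal to every $u_j$ with $\lambda_j\leq 0$, derives $\int_\Omega(1+k^2q-\lambda_j)u_jv\dx=0$, and concludes from $\lambda_j\to-\infty$ that $v\perp_{L^2}u_j$ for all but finitely many $j$, hence $v=0$. That step is quite terse: from the displayed identity and Bessel's inequality one literally obtains only $\sum_{\lambda_j\leq 0}\lambda_j^2\langle u_j,v\rangle_{L^2}^2<\infty$, i.e.\ rapid decay rather than finiteness of the coefficients, so the paper's deduction requires further elaboration as written. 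Your route---turning $(u_j)$ into a $b$-orthogonal basis of $(H^1(\Omega),b)$ and reading off the $H^1$-convergent expansion $v=\sum_j\bigl(\int_\Omega vu_j\dx\bigr)u_j$---makes the passage from $L^2$-completeness to $H^1$-convergence fully explicit, and so gives \req{Neumann_EV_Def_Nminus}, the complementarity, and (via $H^1$-continuity of the quadratic form) \req{Neumann_EV_N_minus} in a more robust way, at the modest cost of verifying that $b$ is an equivalent inner product; in this respect your proof is arguably tighter than the paper's at its one delicate point.
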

\begin{proof}
\begin{enumerate}[(a)]
\item Define $c:=k^2\norm{q}_{L^\infty(\Omega)}+1>0$, and  $R:=I-K-k^2 K_q+cK$. Then 
$R$ is coercive and thus continuously invertible. 
Using the equivalent variational formulation of \req{neumann_eigenvalue}, we have that
$\lambda\in \R$ is a Neumann eigenvalue with Neumann eigenfunction $u\not\equiv 0$ if and only if
\[
\int_\Omega \left(-\nabla u\cdot \nabla v + k^2 quv\right)\dx = \lambda \int_\Omega uv\dx
\quad \text{ for all } v\in H^1(\Omega),
\]
which is equivalent to
\[
(I-K-k^2K_q)u=-\lambda K u
\]
and thus to
\begin{equation}\labeq{lemma_Neumann_EV_R_equiv}
Ru=(I-K-k^2 K_q+cK)u=(c-\lambda) K u.
\end{equation}
This shows that $c$ cannot be a Neumann eigenvalue since $Ru\not\equiv 0$ for $u\not\equiv 0$. Moreover, using $K=j^*j$, the invertibility of
$R$, and the injectivity of $j$, we have that \req{lemma_Neumann_EV_R_equiv} is equivalent to 
\[
\frac{1}{c-\lambda} (j u) = j R^{-1} j^* (ju). 
\]
This shows that $\lambda\in \R$ is a Neumann eigenvalue with Neumann eigenfunction $u\in H^1(\Omega)$
if and only if $ju\in L^2(\Omega)$ is an eigenfunction of $j R^{-1} j^*:\ L^2(\Omega)\to L^2(\Omega)$
with eigenvalue $\frac{1}{c-\lambda}$. Since $j$ is injective, and every eigenfunction of $j R^{-1} j^*$ lies in the range of $j$, this is a one-to-one correspondence, and the dimension of the corresponding eigenspaces is the same.
Since $j R^{-1} j^*$ is a compact, self-adjoint, positive operator, the assertions in (a) follow from the spectral theorem on
self-adjoint compact operators.
\item follows from the fact that $I-K-k^2 K_q-\lambda K$ is Fredholm of index $0$ and thus continuously invertible if it is injective.
\item $\dim(N_+)<\infty$ follows from (a).
We define
\[
N_-:=\overline{\spn \{ u_j:\ \lambda_j \leq 0 \}}, \quad \text{ and } \quad \tilde N_-:=\{v\in H^1(\Omega):\ v\perp_{L^2} N_+\}.
\]
$\tilde N_-$ is closed with respect to the $H^1$-norm and contains all $u_j$ with $\lambda_j\leq 0$, so that $N_-\subseteq \tilde N_-$. To
show $N_-=\tilde N_-$, we argue by contradiction. If $N_-\subsetneq \tilde N_-$, then there would exist a $0\neq v\in \tilde N_-$ with
$\langle u_j, v\rangle = 0$ for all $u_j$ with $\lambda_j\leq 0$. 
Using 
\begin{align*}
0&=\langle u_j, v\rangle=\int_\Omega \left( \nabla u_j\cdot \nabla v + u_j v\right) \dx \\
&=\int_\Omega \left( \nabla u_j\cdot \nabla v - k^2 q u_j v\right) \dx
+ \int_\Omega \left( 1+k^2q\right) u_j v \dx\\
&= \int_\Omega \left( 1+k^2q-\lambda_j\right) u_j v \dx,
\end{align*}
and the fact that $\lambda_j\to -\infty$, it would follow that $v\perp_{L^2} u_j$ for all but finitely many $u_j$.
Since $v\perp_{L^2} N_+$, and $(u_1,u_2,\ldots)$ is an orthonormal basis of $L^2(\Omega)$, $v$ must then be a finite combination 
of $u_j$ with $\lambda_j\leq 0$, which would imply that $v=0$. Hence, $N_-=\tilde N_-$, so that the equality in \req{Neumann_EV_Def_Nminus}
is proven. 

Obviously, $N_+\cap N_-=0$ and every $v\in H^1(\Omega)$ can be written as 
\[
v= \sum_{\lambda_j>0} \left( \int_\Omega v u_j\dx \right) u_j + \left( v - \sum_{\lambda_j>0} \left( \int_\Omega v u_j\dx \right) u_j\right) \in N_+ + N_-,
\]
which shows that $N_-$ is a complement of $N_+$.

To show \req{Neumann_EV_N_plus}, we use the $L^2$-orthogonality of the $u_j$ to obtain for all $v=\sum_{\lambda_j > 0} \alpha_j u_j\in N_+$ 
\begin{align*}
\int_\Omega \left( |\nabla v|^2 - k^2qv v\right)\dx 
&= 
\sum_{\lambda_j > 0} \alpha_j \int_\Omega \left( \nabla u_j \cdot \nabla v - k^2qu_j v\right)\dx\\
 & = - \sum_{\lambda_j > 0} \alpha_j \lambda_j \int_{\Omega} u_j v\dx 
 = - \sum_{\lambda_j > 0} \alpha_j^2 \lambda_j \int_{\Omega} u_j^2\dx <0.
\end{align*}
Since every $v\in N_-$ is a 
$H^1(\Omega)$-limit of finite linear combinations of $u_j$ with $\lambda_j\leq 0$,
\req{Neumann_EV_N_minus} follows with the same argument.
\item \req{Neumann_EV_N_plus} can be written as
\[
\langle (K+k^2K_q)v,v\rangle > \norm{v}^2 \quad \text{ for all } v\in N_+.
\]
Lemma \ref{lemma_eigenvalues_quadform}(b) implies that the number $d(q)$ of eigenvalues of  
$K+k^2K_q$ larger than $1$ must be at least $\dim(N_+)$. Likewise, \req{Neumann_EV_N_minus} can be written as
\[
\langle (K+k^2K_q)v,v\rangle \leq \norm{v}^2 \quad \text{ for all } v\in N_-.
\]
Hence, lemma \ref{lemma_eigenvalues_quadform}(a) shows that $d(q)$ is at most 
$\mathrm{codim}(N_-)=\dim(N_+)$.
\item is trivial.
\end{enumerate}
\end{proof}

\begin{corollary}\label{corollary:d_vs_Neumann_EV_Laplacian}
If $q\in L^\infty(\Omega)$ and $q(x)\leq q_\text{max}\in \R$ for all $x\in \Omega$ (a.e.), then 
$d(q)\leq d(q_\text{max})$, and $d(q_\text{max})$ is the number of Neumann eigenvalues of the Laplacian $\Delta$ that are larger than $-k^2q_\text{max}$.
\end{corollary}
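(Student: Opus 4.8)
The plan is to treat the two assertions separately, both as essentially immediate consequences of the machinery already in place. The first inequality $d(q)\leq d(q_\text{max})$ is nothing but monotonicity of $d$: since $q(x)\leq q_\text{max}$ for almost every $x\in\Omega$, I would regard $q_\text{max}$ as the constant coefficient function and apply lemma~\ref{lemma_monotonicity_q} directly with $q_1=q$ and $q_2=q_\text{max}$ to conclude $d(q)\leq d(q_\text{max})$.

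For the second assertion I would invoke the spectral identification from lemma~\ref{lemma:Neumann_EV}(d), applied to the constant coefficient $q_\text{max}\in L^\infty(\Omega)$ (note that no resonance hypothesis is required there). This tells us that $d(q_\text{max})$ equals the number of positive Neumann eigenvalues of the operator $\Delta+k^2 q_\text{max}$. The remaining step is an elementary translation of the spectrum: because $q_\text{max}$ is constant, a function $u$ solves $(\Delta+k^2 q_\text{max})u=\lambda u$ with $\partial_\nu u|_{\partial\Omega}=0$ if and only if $\Delta u=(\lambda-k^2 q_\text{max})u$ with $\partial_\nu u|_{\partial\Omega}=0$, i.e.\ $u$ is a Neumann eigenfunction of the Laplacian $\Delta$ for the eigenvalue $\mu=\lambda-k^2 q_\text{max}$. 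This correspondence is a bijection that leaves the eigenspaces (and hence the multiplicities) unchanged, so the Neumann spectrum of $\Delta+k^2 q_\text{max}$ is exactly the Neumann spectrum of $\Delta$ shifted by $k^2 q_\text{max}$. Consequently $\lambda>0$ if and only if $\mu>-k^2 q_\text{max}$, and counting with multiplicity yields that the number of positive Neumann eigenvalues of $\Delta+k^2 q_\text{max}$ equals the number of Neumann eigenvalues of $\Delta$ larger than $-k^2 q_\text{max}$, as claimed.

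I do not expect a genuine obstacle here beyond bookkeeping. The one point that deserves care is the sign convention: with the analysts' Laplacian $\Delta$ (rather than $-\Delta$) the Neumann eigenvalues accumulate at $-\infty$, and the translation by $+k^2 q_\text{max}$ moves the threshold for positivity from $0$ to $-k^2 q_\text{max}$. One should also record that the shift preserves multiplicities, which is immediate since the eigenfunctions themselves are unaltered.
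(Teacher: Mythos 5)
Your proposal is correct and follows exactly the paper's own proof: the paper likewise obtains $d(q)\leq d(q_\text{max})$ from lemma~\ref{lemma_monotonicity_q}, identifies $d(q_\text{max})$ with the number of positive Neumann eigenvalues of $\Delta+k^2q_\text{max}$ via lemma~\ref{lemma:Neumann_EV}(d), and treats the spectral shift by $k^2q_\text{max}$ as obvious. You merely spell out the shift-and-multiplicity bookkeeping that the paper dismisses with ``obviously,'' which is fine.
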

\begin{proof}
Obviously, the number of positive Neumann eigenvalues of $\Delta+k^2q_\text{max}$ agrees with 
the number of Neumann eigenvalues of the Laplacian $\Delta$ that are greater than $-k^2q_\text{max}$. Hence, the assertion follows
from lemma \ref{lemma_monotonicity_q} and lemma \ref{lemma:Neumann_EV}(d).
\end{proof}

\begin{remark}
One can show, by using constant potentials, that for the Helmholtz equation, $\Lambda_{q_2} - \Lambda_{q_1}$ can actually have negative eigenvalues when $q_1 \leq q_2$. This shows that in Theorem \ref{thm:monotonicity} it is indeed necessary to work modulo a finite dimensional subspace. The details will appear in a subsequent work.
\end{remark}

\section{Localized potentials for the Helmholtz equation}\label{sect:loc_pot}

In this section we extend the result in \cite{gebauer2008localized} to the Helmholtz equation and prove that we can control the energy terms appearing in the monotonicity relation in spaces of finite codimension. We will first state the result and prove it using a functional analytic relation between operator norms and the ranges of their adjoints in subsection \ref{subsect:localized_potentials_range}. Subsection~\ref{subsect:localized_potentials_runge} then gives an alternative proof that is based on a Runge approximation argument.

\subsection{Localized potentials}\label{subsect:localized_potentials_range}

Our main result on controlling the solutions of the Helmholtz equation in spaces of finite codimension is the following theorem.

\begin{theorem}\label{thm:localized_potentials}
Let $q\in L^\infty(\Omega) \setminus \{0\}$, for which $k>0$ is not a resonance.
Let $B,D\subseteq \overline\Omega$ be measurable, $B\setminus \overline{D}$ possess positive measure, and $\overline{\Omega}\setminus \overline{D}$ be connected to $\Sigma$.

Then for any subspace $V\subset L^2(\Sigma)$ with $\dim V<\infty$, there exists a sequence
$(g_j)_{j\in \N}\subset V^\perp$ such that
\[
\int_{B} |u_{q}^{(g_j)}|^2 \dx \to \infty, \quad \text{ and } \quad \int_{D} |u_{q}^{(g_j)}|^2 \dx \to 0,  
\]
where $u_{q}^{(g_j)}\in H^1(\Omega)$ solves the Helmholtz equation \req{Helmholtz} with Neumann boundary data $g_j$.
\end{theorem}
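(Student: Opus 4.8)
The plan is to prove this localized potentials result by a duality/functional-analytic argument, exploiting the relationship between the norm of a linear functional defined via one solution operator and the range of the adjoint of another. This is the approach announced in the subsection title (``a functional analytic relation between operator norms and the ranges of their adjoints''), so I would follow that route rather than the Runge-approximation alternative.

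First I would set up the two energy functionals as quadratic forms coming from bounded linear operators. Define the solution operator $S: L^2(\Sigma) \to H^1(\Omega)$, $g \mapsto u_q^{(g)}$, which is bounded by Lemma~\ref{lemma:resonances}, and let $j_B, j_D$ denote the (bounded) maps sending $u \in H^1(\Omega)$ to its restriction in $L^2(B)$, resp.\ $L^2(D)$. Then $\int_B |u_q^{(g)}|^2\,\dx = \Norm{j_B S g}_{L^2(B)}^2$ and similarly for $D$. The goal becomes: find $(g_j) \subset V^\perp$ with $\Norm{j_B S g_j} \to \infty$ while $\Norm{j_D S g_j} \to 0$. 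The natural mechanism is a norm-domination statement: I claim it suffices to show that there is \emph{no} constant $C$ with $\Norm{j_B S g} \leq C \Norm{j_D S g}$ for all $g \in V^\perp$, because failure of such an estimate produces exactly a sequence along which the ratio blows up; then a normalization making $\Norm{j_D S g_j} \to 0$ (e.g.\ rescaling) gives both conclusions simultaneously.

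The heart of the matter is the abstract lemma relating range inclusions to norm inequalities: for bounded operators $A, B$ between Hilbert spaces, $\range(A^*) \subseteq \range(B^*)$ is equivalent to the existence of $C$ with $\Norm{A x} \leq C \Norm{B x}$ for all $x$ (a standard result, e.g.\ of Douglas type). Thus to \emph{rule out} the estimate I would show the corresponding range inclusion fails, i.e.\ $\range((j_B S)^*) \not\subseteq \range((j_D S)^*)$, restricted to the finite-codimension space $V^\perp$. Concretely I would produce an element in $\range((j_B S)^*)$ that is not in $\range((j_D S)^*)$; this is where the geometric hypotheses enter decisively. The adjoints $(j_B S)^*$ and $(j_D S)^*$ act on source terms supported in $B$, resp.\ $D$, and produce Neumann data; an element in the range of the $D$-adjoint corresponds to a solution of the Helmholtz equation that is source-free outside $D$, hence (by the unique continuation Theorem~\ref{thm:UCP}(b)) rigidly determined from its Cauchy data on $\Sigma$ across the connected set $\overline\Omega \setminus \overline D$. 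Because $B \setminus \overline D$ has positive measure, one can place a source there that cannot be matched by any $D$-supported source, contradicting the inclusion.

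**The main obstacle** I expect is handling the finite-dimensional constraint $g \in V^\perp$ cleanly: the abstract norm/range lemma is cleanest on the whole space, so I would either replace $S$ by its restriction to $V^\perp$ (equivalently compose with the orthogonal projection $P_{V^\perp}$ and work with $j_B S P_{V^\perp}$), noting that a finite-rank perturbation does not affect range inclusions up to finite-dimensional corrections, or absorb $V$ into the ``$D$-side'' operator so that the relevant range is enlarged only by a finite-dimensional subspace. Since the range inclusion fails already because of an infinite-dimensional discrepancy forced by the positive-measure set $B \setminus \overline D$, adding the finite-dimensional $V$ on the dominating side cannot restore the inclusion, and the argument survives. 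The unique continuation step and the careful identification of the adjoint operators' ranges with spaces of Cauchy data are the technically delicate pieces; the blow-up/normalization at the end is routine.
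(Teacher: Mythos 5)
Your proposal matches the paper's primary proof essentially step for step: write the energies as $\Norm{L_B g}^2$ and $\Norm{L_D g}^2$ for the composed solution--restriction operators, characterize the adjoints as traces on $\Sigma$ of solutions of source problems, invoke the Douglas-type norm--range lemma (lemma~\ref{lemma:fundamental}), separate the ranges by unique continuation, and handle $V$ by absorbing it into the dominating side (the paper stacks $L_D$ with the projection $P_V$ and projects the resulting sequence onto $V^\perp$ at the end), using exactly your ``infinite-dimensional discrepancy'' observation, which the paper formalizes as lemma~\ref{lemma:vector_spaces}. Two small points to fix in a write-up: the adjoints produce \emph{Dirichlet} traces $v|_\Sigma$ of solutions with homogeneous Neumann data (not Neumann data), and the precise form of the discrepancy requires first shrinking to a closed ball $C$ about a density point of $B\setminus\overline{D}$ (lemma~\ref{lemma:connected}) and proving $\range(L_{B\cap C}^*)\cap\range(L_D^*)=\{0\}$ with $\range(L_{B\cap C}^*)$ infinite-dimensional via the gluing/unique-continuation argument (lemma~\ref{lemma:rangeLstar}) --- exhibiting a single unmatched element, as your sketch suggests, would not survive enlarging the right-hand side by the finite-dimensional $V$.
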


The arguments that we will use to prove theorem \ref{thm:localized_potentials} in this
subsection also yield a simple proof for the following elementary result. We
formulate it as a theorem since we will utilize it in the next section to
control energy terms in monotonicity inequalities for different scattering
coefficients.

\begin{theorem}\label{thm:useful}
Let $q_1,q_2\in L^\infty(\Omega) \setminus \{0\}$, for which $k>0$ is not a resonance.
If $q_1(x)=q_2(x)$ for all $x$ (a.e.) outside a measurable set $D\subset \Omega$, then there exist constants $c_1,c_2>0$ such
that 
\[
c_1 \int_{D} |u_{1}^{(g)}|^2 \dx \leq \int_{D} |u_{2}^{(g)}|^2 \dx \leq c_2 \int_{D} |u_{1}^{(g)}|^2\dx
\quad \text{ for all } \quad g\in L^2(\Sigma),
\] 
where $u_{1}^{(g)}, u_{2}^{(g)}\in H^1(\Omega)$ solve the Helmholtz equation \req{Helmholtz} 
with Neumann boundary data $g$ and $q=q_1$, resp., $q=q_2$.
\end{theorem}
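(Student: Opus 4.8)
The plan is to compare the two solutions through their difference and to exploit the fact that the source term of the resulting equation is supported in $D$, so that it is controlled entirely by $u_1^{(g)}|_D$. Fix $g\in L^2(\Sigma)$ and set $w:=u_2^{(g)}-u_1^{(g)}\in H^1(\Omega)$. Since $u_1^{(g)}$ and $u_2^{(g)}$ share the same Neumann data $g$, the difference $w$ has homogeneous Neumann data; and since $(\Delta+k^2q_1)u_1^{(g)}=0$, we have $(\Delta+k^2q_2)u_1^{(g)}=k^2(q_2-q_1)u_1^{(g)}$, so that $w$ solves
\[
(\Delta+k^2q_2)w=-k^2(q_2-q_1)u_1^{(g)}\quad\text{in }\Omega,\qquad \partial_\nu w|_{\partial\Omega}=0.
\]
Using the equivalence between the Neumann problem and its operator form (lemma~\ref{lemma:resonances}(a)), this reads
\[
(I-K-k^2K_{q_2})\,w=k^2\,j^*M_{q_2-q_1}\,j\,u_1^{(g)}.
\]
Because $k$ is not a resonance for $q_2$, the operator $I-K-k^2K_{q_2}$ is continuously invertible, so $w=k^2(I-K-k^2K_{q_2})^{-1}j^*M_{q_2-q_1}\,j\,u_1^{(g)}$.

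The key observation is that $q_1=q_2$ almost everywhere outside $D$ forces $(q_2-q_1)u_1^{(g)}$ to vanish a.e.\ off $D$, whence
\[
\Norm{M_{q_2-q_1}\,j\,u_1^{(g)}}_{L^2(\Omega)}=\Norm{(q_2-q_1)u_1^{(g)}}_{L^2(D)}\leq\Norm{q_2-q_1}_{L^\infty(\Omega)}\Norm{u_1^{(g)}}_{L^2(D)}.
\]
Since $j^*$, $(I-K-k^2K_{q_2})^{-1}$, and $j$ are all bounded, and $\Norm{w}_{L^2(D)}\leq\Norm{jw}_{L^2(\Omega)}\leq\Norm{j}\,\Norm{w}_{H^1(\Omega)}$, this yields a constant $C>0$, independent of $g$, with $\Norm{w}_{L^2(D)}\leq C\Norm{u_1^{(g)}}_{L^2(D)}$. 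The triangle inequality then gives $\Norm{u_2^{(g)}}_{L^2(D)}\leq\Norm{u_1^{(g)}}_{L^2(D)}+\Norm{w}_{L^2(D)}\leq(1+C)\Norm{u_1^{(g)}}_{L^2(D)}$, which is exactly the upper bound with $c_2=(1+C)^2$.

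Finally, the whole argument is symmetric in $q_1$ and $q_2$: repeating it with the roles interchanged (legitimate since $k$ is a resonance for neither coefficient) produces a constant $C'>0$ with $\Norm{u_1^{(g)}}_{L^2(D)}\leq(1+C')\Norm{u_2^{(g)}}_{L^2(D)}$, giving the lower bound with $c_1=(1+C')^{-2}$. I do not expect a genuine obstacle here; the only point requiring care is the support argument, namely recognizing that the source of the difference equation lives in $D$ and therefore its $L^2(\Omega)$-norm equals its $L^2(D)$-norm, which is what converts the boundedness of the solution operator into a bound by $\Norm{u_1^{(g)}}_{L^2(D)}$ rather than by a full-domain norm, and in turn makes the constants uniform in $g$.
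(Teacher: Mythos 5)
Your proof is correct, and it takes a genuinely different route from the paper. The paper works dually: it introduces the restriction operators $L_{q_1,D}, L_{q_2,D}: g \mapsto u^{(g)}|_D$ from lemma~\ref{lemma:adjoint_L}, shows that $\range(L_{q_1,D}^*)=\range(L_{q_2,D}^*)$ (using exactly your key observation, but on the adjoint side: a solution of $\Delta v + k^2 q_1 v = f\chi_D$ with homogeneous Neumann data also solves $\Delta v + k^2 q_2 v = \big(f + k^2(q_2-q_1)v\big)\chi_D$, since $q_2-q_1$ vanishes off $D$), and then invokes the abstract norm-versus-range lemma~\ref{lemma:fundamental} to convert the range equality into the two-sided norm estimate. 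That abstract lemma produces the constants non-constructively, but it fits the result seamlessly into the functional-analytic machinery already set up for the localized potentials argument. Your proof instead estimates the difference $w=u_2^{(g)}-u_1^{(g)}$ directly: you identify the source $-k^2(q_2-q_1)u_1^{(g)}$ of the difference equation as supported in $D$, solve via the invertible operator $I-K-k^2K_{q_2}$, and chain operator norms to get $\norm{w}_{L^2(D)}\leq C\norm{u_1^{(g)}}_{L^2(D)}$ with an explicit constant $C \leq k^2\Norm{q_2-q_1}_{L^\infty(\Omega)}\,\norm{j}^2\,\Norm{(I-K-k^2K_{q_2})^{-1}}$, after which the triangle inequality and the symmetry in $q_1,q_2$ finish the argument. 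What your approach buys is elementarity and quantitativeness: it avoids lemma~\ref{lemma:fundamental} entirely and yields constants that are explicit in the data; what the paper's approach buys is uniformity of method, since the same range-characterization technique is the engine of the localized potentials theorem~\ref{thm:localized_potentials} proved alongside. Both proofs hinge on the identical insight you flag at the end, namely that the coupling term $(q_2-q_1)u$ lives in $D$, so its $L^2(\Omega)$-norm is its $L^2(D)$-norm.
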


To prove theorems \ref{thm:localized_potentials} and \ref{thm:useful} we will formulate and prove
several lemmas. Let us first note that the assertion of theorem \ref{thm:localized_potentials} already holds if we
can prove it for a subset of $B$ with positive measure. We will use the subset $B\cap C$, where $C$
is a small closed ball constructed in the next lemma.

\begin{lemma}\label{lemma:connected}
Let $B,D\subseteq \overline\Omega$ be measurable, $B\setminus \overline{D}$ possess positive measure, and $\overline{\Omega}\setminus \overline{D}$ be connected to $\Sigma$. Then there exists a closed ball $C$ such that $B\cap C$ has positive measure, $C\cap \overline{D}=\emptyset$, and $\overline{\Omega}\setminus (\overline{D}\cup C)$ is connected to $\Sigma$.  
\end{lemma}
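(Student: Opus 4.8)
The plan is to construct the ball $C$ in two stages: first find a small ball that captures positive measure of $B\setminus\overline D$ and avoids $\overline D$, then shrink it further to preserve connectedness to $\Sigma$ of the complement. Since $B\setminus\overline D$ has positive measure, almost every one of its points is a Lebesgue density-one point. Pick such a density point $x_0\in B\setminus\overline D$. Because $\overline D$ is closed and $x_0\notin\overline D$, there is a positive distance $\delta=\mathrm{dist}(x_0,\overline D)>0$. For any radius $\rho<\delta$, the closed ball $\overline{B_\rho(x_0)}$ satisfies $\overline{B_\rho(x_0)}\cap\overline D=\emptyset$, and since $x_0$ is a density point, $B\cap B_\rho(x_0)$ has positive measure for every $\rho>0$. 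So the first two required properties hold for every sufficiently small radius; it remains to ensure the third by choosing $\rho$ small enough.

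The hard part is the connectedness claim: I must verify that removing the closed ball $C$ does not disconnect $\overline\Omega\setminus\overline D$ from $\Sigma$, i.e.\ that $\overline\Omega\setminus(\overline D\cup C)$ still contains a connected set meeting $\Sigma$. The hypothesis gives that $\overline\Omega\setminus\overline D$ is connected and meets $\Sigma$. The idea is to exploit that the obstruction $C$ is an arbitrarily small ball: intuitively, a small ball cannot separate a connected open region of $\R^n$ ($n\geq 2$) as long as we keep enough room. Concretely, I would argue as follows. Fix a point $y_0\in\Sigma\cap(\overline\Omega\setminus\overline D)$; by relative openness we may assume $y_0$ lies in the relatively open connected set witnessing the ``connected to $\Sigma$'' property. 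The set $U:=\overline\Omega\setminus\overline D$ is open in $\overline\Omega$, connected, and meets $\Sigma$. I want to choose $x_0$ and $\rho$ so that $U\setminus C$ remains connected and still meets $\Sigma$.

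To make the connectedness rigorous I would use a path/covering argument. Since $U$ is open and connected (hence path-connected in the relative topology of $\overline\Omega$, using that $\Omega$ is a Lipschitz domain so $\overline\Omega$ is locally path-connected), any two points of $U$ are joined by a path in $U$. Now choose the density point $x_0$ to lie in the \emph{interior} portion of $B\setminus\overline D$ away from $\Sigma$ if possible; but more robustly, note that the compact set consisting of a fixed path from a reference point to $y_0$ can be dilated to a tubular neighborhood staying inside $U$, and then pick $\rho$ small enough that $C=\overline{B_\rho(x_0)}$ misses this neighborhood unless $x_0$ itself lies on it — in which case one first perturbs $x_0$ within its positive-measure set of density points to avoid the (measure-zero) path. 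Removing a ball disjoint from a connecting tube leaves a connected set containing $\Sigma\cap U$, which establishes the claim. I expect the genuine obstacle to be this topological separation argument: one must confirm that a small ball, chosen within the positive-measure target and away from $\overline D$, can simultaneously be steered clear of a fixed curve joining the bulk of $U$ to $\Sigma$, so that $\overline\Omega\setminus(\overline D\cup C)$ retains a component touching $\Sigma$. The measure-theoretic freedom in selecting $x_0$ (a positive-measure set of candidates versus a measure-zero curve to avoid) is what makes this possible, and that is the point I would emphasize in the write-up.
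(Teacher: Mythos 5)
Your construction (take a Lebesgue density-one point $x_0$ of $B\setminus\overline D$ and a sufficiently small closed ball around it) is exactly the paper's construction, and your handling of the first two properties is correct. The genuine gap is in the third property. In this paper ``connected to $\Sigma$'' is a defined term (see the definition before theorem~\ref{thm:UCP}): the relatively open set $\overline\Omega\setminus(\overline D\cup C)$ must itself be \emph{connected} and intersect $\Sigma$. You explicitly weaken the goal to ``contains a connected set meeting $\Sigma$'' and then prove only that: your tube argument shows the component of $U\setminus C$ containing a fixed connecting tube still reaches $\Sigma$, but it says nothing about points of $U\setminus C$ away from the tube, so it does not rule out that removing $C$ splits $U:=\overline\Omega\setminus\overline D$ into several components. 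The full connectedness is what is actually used downstream: in lemma~\ref{lemma:rangeLstar} the unique continuation result theorem~\ref{thm:UCP}(b) is applied on $\overline\Omega\setminus(\overline D\cup C)$, and the vanishing must propagate through the \emph{entire} set, which fails on any component not meeting $\Sigma$. So as written your proof establishes a strictly weaker conclusion than the lemma asserts.

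The fix is both simpler and stronger than your path-avoidance and perturbation machinery, which it renders unnecessary. First note $x_0\in\Omega$: at a boundary point of a Lipschitz domain the Lebesgue density of $\overline\Omega$ is bounded away from $1$, so a density-one point of $B\setminus\overline D\subseteq\overline\Omega$ is interior. Since $\Omega\setminus\overline D$ is open, choose $\rho>0$ with $\overline{B_{2\rho}(x_0)}\subset\Omega\setminus\overline D$ and set $C:=\overline{B_\rho(x_0)}$. Any two points of $U\setminus C$ are joined by a path in $U$ (connected plus locally path-connected, as you note); any portion of that path entering $B_{2\rho}(x_0)$ can be rerouted along a sphere $\{|x-x_0|=\rho''\}$ with $\rho<\rho''<2\rho$, which lies in $U\setminus C$ and is connected precisely because $n\geq 2$. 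Hence $U\setminus C$ is connected, and since $C\subset\Omega$ it still contains $\Sigma\cap U\neq\emptyset$. This annulus-rerouting step is what the paper's one-line ``the closure of a sufficiently small ball will fulfill\dots'' tacitly appeals to; your observation about the measure-theoretic freedom to move $x_0$ off a fixed measure-zero curve is correct but neither needed for, nor sufficient to give, the stated conclusion.
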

\begin{proof}
Let $x$ be a point of Lebesgue density one in $B\setminus \overline{D}$. Then the closure $C$ of a sufficiently small ball centered in $x$ will fulfill that $B\cap C$ has positive measure, $C\cap \overline{D}=\emptyset$, and that $\overline{\Omega}\setminus (\overline{D}\cup C)$ is connected to $\Sigma$.
\end{proof}

Now we follow the general approach in \cite{gebauer2008localized}. We formulate the energy terms in theorem \ref{thm:localized_potentials} as norms of operator evaluations and characterize their adjoints. Then we characterize the ranges of the adjoints using the unique continuation property, and then prove theorem \ref{thm:localized_potentials} using a functional analytic relation between norms of operator evaluations and ranges of their adjoints.

\begin{lemma}\label{lemma:adjoint_L}
Let $q\in L^\infty(\Omega) \setminus \{0\}$, for which $k>0$ is not a resonance.
For a measurable set $D\subset \Omega$ we define
\begin{align*}
L_D:\ L^2(\Sigma)\to L^2(D), \quad g\mapsto u|_{D},
\end{align*}
where $u\in H^1(\Omega)$ solves \req{Helmholtz}. Then $L_D$ is a compact linear operator, and its adjoint fulfills
\begin{align*}
L_D^*:\ L^2(D)\to L^2(\Sigma), \quad f\mapsto v|_{\Sigma},
\end{align*} 
where $v$ solves 
\begin{equation}\labeq{PDE_for_adjoint_L}
\Delta v + k^2q v= f \chi_D, \quad \partial_\nu v|_{\partial\Omega}=0. 
\end{equation}
\end{lemma}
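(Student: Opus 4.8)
The plan is to compute the adjoint directly from the defining duality pairing. The operator $L_D$ sends $g \in L^2(\Sigma)$ to $u|_D$, where $u = u_q^{(g)}$ solves \req{Helmholtz}. Since this is the composition of the bounded solution operator $S: L^2(\Sigma) \to H^1(\Omega)$ with the compact embedding $H^1(\Omega) \hookrightarrow L^2(\Omega)$ followed by restriction to $D$, compactness of $L_D$ is immediate from the compactness of $j$. The substantive content is the identification of $L_D^*$.

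First I would set up the two variational characterizations. For $g \in L^2(\Sigma)$ and $f \in L^2(D)$, by definition of the adjoint,
\[
\langle L_D g, f\rangle_{L^2(D)} = \langle g, L_D^* f\rangle_{L^2(\Sigma)}.
\]
The left side equals $\int_D u\, f \dx = \int_\Omega u\, (f\chi_D)\dx$, where $u$ solves \req{Helmholtz} with data $g$. Let $v \in H^1(\Omega)$ be the (unique, by the non-resonance assumption and lemma~\ref{lemma:resonances}) solution of \req{PDE_for_adjoint_L}, i.e. $(\Delta + k^2 q)v = f\chi_D$ with $\partial_\nu v|_{\partial\Omega} = 0$. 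I would then test the weak form of the equation for $v$ against $u$, and the weak form of the equation for $u$ against $v$. Concretely, the variational identity for $v$ reads
\[
\int_\Omega \left(\nabla u \cdot \nabla v - k^2 q\, u v\right)\dx = -\int_\Omega u\,(f\chi_D)\dx,
\]
since $v$ has homogeneous Neumann data so no boundary term appears. The variational identity for $u$ (equation \req{Varform}) reads
\[
\int_\Omega \left(\nabla u \cdot \nabla v - k^2 q\, u v\right)\dx = \int_{\partial\Omega} g\, \gamma_\Sigma(v)\dx[s] = \int_\Sigma g\, v|_\Sigma\dx[s].
\]
Both quantities equal the same symmetric bilinear form, so equating the right-hand sides yields $\int_\Omega u\,(f\chi_D)\dx = \int_\Sigma g\, v|_\Sigma\dx[s]$, which is exactly $\langle L_D g, f\rangle_{L^2(D)} = \langle g, v|_\Sigma\rangle_{L^2(\Sigma)}$. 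Since this holds for all $g$, we conclude $L_D^* f = v|_\Sigma$, as claimed.

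The main point requiring care — rather than a genuine obstacle — is the legitimacy of using $u$ and $v$ as test functions in each other's variational formulations; this is fine since both lie in $H^1(\Omega)$, which is the test space in \req{Varform}, and the symmetry of the Dirichlet-type form $\int_\Omega(\nabla u\cdot\nabla v - k^2 q\, uv)\dx$ in $u$ and $v$ is what makes the two pairings coincide. One should also note that $v$ is well defined: the right-hand side $f\chi_D \in L^2(\Omega)$ and non-resonance guarantees unique solvability of the Neumann problem \req{PDE_for_adjoint_L} via lemma~\ref{lemma:resonances}(a). Thus the entire argument reduces to a single application of the symmetric bilinear form to the two solutions, and no hard analysis is needed beyond what is already established.
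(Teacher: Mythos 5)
Your overall route is in substance the same as the paper's: the paper factorizes $L_D = R_D\, j\, (I-K-k^2K_q)^{-1}\gamma_\Sigma^*$, takes adjoints algebraically using the self-adjointness of $I-K-k^2K_q$, and then identifies the resulting operator equation as the weak form of \req{PDE_for_adjoint_L}; your duality-pairing computation with the symmetric bilinear form is exactly that argument unfolded, and your compactness argument (solution operator composed with the compact embedding $j$ and restriction) coincides with the paper's.

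There is, however, a sign slip at your equating step, and it is worth flagging because it interacts with a sign inconsistency in the paper itself. Your two displayed identities are individually correct: with the convention fixed in the proof of lemma~\ref{lemma:resonances}(a), the weak form of \req{PDE_for_adjoint_L} tested against $u$ is
\[
\int_\Omega\left(\nabla u\cdot\nabla v - k^2 q\, uv\right)\dx = -\int_\Omega u\,(f\chi_D)\dx,
\]
while \req{Varform} gives $\int_\Omega(\nabla u\cdot\nabla v - k^2 q\, uv)\dx = \int_\Sigma g\, v|_\Sigma\dx[s]$. Equating the two right-hand sides therefore yields
\[
\int_D u f\dx = -\int_\Sigma g\, v|_\Sigma\dx[s],
\]
i.e.\ $L_D^* f = -v|_\Sigma$ with $v$ as in \req{PDE_for_adjoint_L} (equivalently, $L_D^* f = v|_\Sigma$ when $v$ solves $\Delta v + k^2 q v = -f\chi_D$, $\partial_\nu v|_{\partial\Omega}=0$), not $+v|_\Sigma$: you silently dropped the minus sign when passing from your two displays to the conclusion. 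Note that the paper's own proof contains the same blemish: it asserts that $(I-K-k^2K_q)v = j^*R_D^*f$, whose weak form carries $+\int_D f w\dx$ on the right, is "the variational formulation equivalent to \req{PDE_for_adjoint_L}", whereas it is the weak form of $\Delta v + k^2 q v = -f\chi_D$. The discrepancy is harmless for everything the lemma is used for — lemma~\ref{lemma:rangeLstar}, theorem~\ref{thm:useful} and the localized potentials argument only invoke $\range(L_D^*)$ and operator norms, both invariant under replacing $L_D^*$ by $-L_D^*$ — but a correct writeup should either carry the minus sign through or restate \req{PDE_for_adjoint_L} with right-hand side $-f\chi_D$.
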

\begin{proof}
With the operators $I$, $j$, and $K_{q}$ defined as in subsection
\ref{Subsec:NtD} and \req{Helmholtz_Operatorform} we have that
\begin{equation*}
L_D=R_D j (I-K-k^2 K_q)^{-1}\gamma_\Sigma^*, 
\end{equation*}
where $R_D:\ L^2(\Omega)\to L^2(D)$ is the restriction operator $v\to v|_D$.
Hence, $L_D$ is a linear compact operator, and its adjoint
is
\[
L_D^* = \gamma_\Sigma  (I-K-k^2 K_q)^{-1}j^* R_D^*. 
\]
Thus $L_D^* f=v|_{\Sigma}$ where $v\in H^1(\Omega)$ solves $(I-K-k^2 K_q) v = j^* R_D^* f$, i.e.,
for all $w\in H^1(\Omega)$,
\begin{align*}
\int_{\Omega} \left( \nabla v \cdot \nabla w - k^2 q vw\right) \dx
&= \langle  (I-K-k^2 K_q) v,w \rangle 
=  \langle   j^* R_D^* f , w \rangle
= \int_D f w \dx,
\end{align*}
which is the variational formulation equivalent to \req{PDE_for_adjoint_L}.
\end{proof}

\begin{lemma}\label{lemma:rangeLstar}
Let $q\in L^\infty(\Omega) \setminus \{0\}$, for which $k>0$ is not a resonance.
Let $B,D\subseteq \overline\Omega$ be measurable, and $C\subseteq \overline{\Omega}$ be a closed set such that $B\cap C$ has positive measure, $C\cap \overline{D}=\emptyset$, and $\overline{\Omega}\setminus (\overline{D}\cup C)$ is connected to $\Sigma$. 
Then, 
\begin{equation}\labeq{range_intersection_trivial}
\range(L_{B\cap C}^*)\cap \range(L_D^*)=\{0\}.
\end{equation}
and $\range(L_{B\cap C}^*), \range(L_D^*)\subset L^2(\Sigma)$ are both dense (and thus in particular infinite dimensional).
\end{lemma}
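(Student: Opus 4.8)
The plan is to treat the two assertions separately: the triviality of the intersection, which is the substantial part and rests on unique continuation together with the non-resonance assumption, and the density statements, which I would obtain from a routine duality argument.

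For the intersection, suppose $\psi\in\range(L_{B\cap C}^*)\cap\range(L_D^*)$. By lemma~\ref{lemma:adjoint_L} there are $f_1\in L^2(B\cap C)$ and $f_2\in L^2(D)$ whose associated solutions $v_1,v_2\in H^1(\Omega)$ of the inhomogeneous Neumann problem \req{PDE_for_adjoint_L} (with sources supported in $B\cap C$, resp.\ $D$) satisfy $v_1|_\Sigma=\psi=v_2|_\Sigma$. Set $w:=v_1-v_2$. Then $\Delta w\in L^2(\Omega)$, $\partial_\nu w|_{\partial\Omega}=0$, $w|_\Sigma=0$, and, since the two sources sit in $C$ resp.\ $\overline D$, the function $w$ solves the homogeneous Helmholtz equation on $\Omega\setminus(\overline D\cup C)$. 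As $\overline\Omega\setminus(\overline D\cup C)$ is connected to $\Sigma$ and $w$ has vanishing Cauchy data on $\Sigma$, theorem~\ref{thm:UCP}(b) forces $w=0$ there, i.e.\ $v_1=v_2$ on $R:=\Omega\setminus(\overline D\cup C)$.

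The crucial observation is then that $C\cap\overline D=\emptyset$, so $\{\Omega\setminus C,\ \Omega\setminus\overline D\}$ is an open cover of $\Omega$ whose overlap is exactly $R$. On $\Omega\setminus C$ the source of $v_1$ is absent, so $v_1$ solves the homogeneous equation there; likewise $v_2$ solves it on $\Omega\setminus\overline D$; and the two agree on $R$ by the previous step. Hence the function $V$ equal to $v_1$ on $\Omega\setminus C$ and to $v_2$ on $\Omega\setminus\overline D$ is a well-defined element of $H^1(\Omega)$ that coincides locally with a homogeneous solution near every point, so $(\Delta+k^2q)V=0$ in $\Omega$, and the same cover yields $\partial_\nu V|_{\partial\Omega}=0$. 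Since $k$ is not a resonance, lemma~\ref{lemma:resonances}(a) gives $V\equiv 0$, whence $\psi=V|_\Sigma=0$. I expect this gluing step to be the main conceptual point: the disjointness $C\cap\overline D=\emptyset$ is precisely what turns a potential flux-matching problem across an interface into the harmless overlap of an open cover, and it is indispensable, since the unique continuation step alone only produces $v_1=v_2$ on $R$ and cannot by itself give $\psi=0$.

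For the density claims I would use that $\overline{\range(L_A^*)}=\kernel(L_A)^\perp$ for a bounded operator, so it suffices to show $L_{B\cap C}$ and $L_D$ are injective. If $L_Ag=0$ for a set $A$ of positive measure, then the Helmholtz solution $u_q^{(g)}$ vanishes on $A$, so theorem~\ref{thm:UCP}(a) gives $u_q^{(g)}\equiv 0$ in $\Omega$; feeding this into the variational formulation \req{Varform} and using that the boundary traces $\gamma_\Sigma(v)$ are dense in $L^2(\Sigma)$ forces $g=0$. Since $B\cap C$ has positive measure by hypothesis (and $D$ likewise), both operators are injective, their adjoint ranges are dense in $L^2(\Sigma)$, and a dense subspace of the infinite-dimensional space $L^2(\Sigma)$ is necessarily infinite-dimensional.
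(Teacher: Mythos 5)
Your proposal is correct and follows essentially the same route as the paper: characterize the adjoints via lemma~\ref{lemma:adjoint_L}, use theorem~\ref{thm:UCP}(b) to identify the two solutions on $\Omega\setminus(C\cup\overline D)$, glue them to a global solution of the homogeneous Neumann problem killed by the non-resonance assumption, and get density from injectivity of $L_{B\cap C}$, $L_D$ via theorem~\ref{thm:UCP}(a). Your open-cover gluing of $v_1$ on $\Omega\setminus C$ and $v_2$ on $\Omega\setminus\overline D$ is just a cosmetic repackaging of the paper's piecewise definition on $\Omega\setminus(C\cup\overline D)$, $\overline D$, and $C$, and your inlined observation that the source $f_1\chi_{B\cap C}$ sits in $C$ replaces the paper's explicit reduction $\range(L_{B\cap C}^*)\subseteq\range(L_C^*)$.
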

\begin{proof}
It follows from the unique continuation property in theorem~\ref{thm:UCP}(a) that $L_{B\cap C}$ and $L_D$ are injective. Hence $\range(L_{B\cap C}^*)$ and $\range(L_D^*)$ are dense subspaces of $L^2(\Sigma)$. 

The characterization of the adjoint operators in lemma~\ref{lemma:adjoint_L} shows that 
\[
B\cap C\subseteq C \quad \text{ implies that } \quad \range(L_{B\cap C}^*)\subseteq \range(L_C^*).
\]
Hence, \req{range_intersection_trivial} follows a fortiori if we can show that
\[
\range(L_C^*)\cap \range(L_D^*)=\{0\}.
\]
To show this let $h\in \range(L_C^*)\cap \range(L_D^*)$. Then there exist
$f_C\in L^2(C)$, $f_D\in L^2(D)$, and $v_C,v_D\in H^1(\Omega)$ such that
\begin{align*}
\Delta v_C + k^2q v_C= f_C \chi_C, \quad \partial_\nu v|_{\partial \Omega}=0,\\
\Delta v_D + k^2q v_D= f_D \chi_D, \quad \partial_\nu v|_{\partial \Omega}=0, 
\end{align*}
and $v_C|_{\Sigma}=h=v_D|_{\Sigma}$. 

It follows from the unique continuation property in theorem~\ref{thm:UCP}(b) that $v_C=v_D$ on the connected set $\Omega\setminus (C\cup \overline{D})$. Hence, 
\[
v:=\left\{ \begin{array}{l l} v_C=v_D & \text{ on $\Omega\setminus (C\cup \overline{D})$}\\
v_C & \text{ on $\overline{D}$}\\
v_D & \text{ on $C$}
\end{array} \right.
\]
defines a $H^1(\Omega)$-function solving 
\[
\Delta v + k^2 q v= 0, \quad \partial_\nu v|_{\partial \Omega}=0,\\
\]  
so that $v=0$ and thus $h=v_C|_{\Sigma}=v_D|_{\Sigma}=v|_{\Sigma}=0$.
\end{proof}

\begin{lemma}\label{lemma:fundamental}
Let $X$, $Y$ and $Z$ be Hilbert spaces, and $A_1:\ X\to Y$ and $A_2:\ X\to Z$ be linear bounded operators.
Then
\[
\exists c>0:\ \norm{A_1x}\leq c \norm{A_2 x} \quad \forall x\in X
\quad \text{ if and only if } \quad \range(A_1^*)\subseteq \range(A_2^*). 
\] 
\end{lemma}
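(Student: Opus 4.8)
The statement is precisely Douglas' range inclusion lemma, and the plan is to prove the two implications separately, with the forward direction (norm bound $\Rightarrow$ range inclusion) handled by an elementary factorization, and the reverse direction handled by the closed graph theorem. In both cases the key idea is to convert the range inclusion into an operator factorization and then pass to adjoints.

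For the direction ``$\norm{A_1 x}\leq c\norm{A_2 x}$ for all $x$'' $\Rightarrow$ ``$\range(A_1^*)\subseteq \range(A_2^*)$'', I would first observe that the norm estimate forces $\kernel(A_2)\subseteq \kernel(A_1)$, since $A_2 x=0$ implies $\norm{A_1 x}\leq c\norm{A_2x}=0$. This makes the assignment $S:\ \range(A_2)\to Y$, $S(A_2 x):=A_1 x$, well defined and, by the estimate, bounded with $\norm{S}\leq c$. I would then extend $S$ continuously to $\overline{\range(A_2)}$ and by zero on its orthogonal complement, obtaining a bounded operator $S:\ Z\to Y$ with $A_1=S A_2$. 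Taking adjoints yields $A_1^*=A_2^* S^*$, and hence $\range(A_1^*)\subseteq \range(A_2^*)$.

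For the converse ``$\range(A_1^*)\subseteq \range(A_2^*)$'' $\Rightarrow$ the norm estimate, I would construct a bounded operator $D:\ Y\to Z$ with $A_1^*=A_2^* D$. Given $y\in Y$, the hypothesis provides some $z\in Z$ with $A_2^* z=A_1^* y$; I would select the unique such $z$ lying in $\overline{\range(A_2)}=\kernel(A_2^*)^\perp$ (the minimal-norm solution) and set $Dy:=z$. Linearity is immediate from uniqueness, and boundedness follows from the closed graph theorem: if $y_n\to y$ and $Dy_n\to z$, then continuity of $A_1^*$ and $A_2^*$ give $A_2^* z=\lim A_2^*(Dy_n)=\lim A_1^* y_n=A_1^* y$, while $z\in\overline{\range(A_2)}$ since this space is closed, so $z=Dy$. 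With $A_1^*=A_2^* D$ established, taking adjoints gives $A_1=D^* A_2$, whence $\norm{A_1 x}=\norm{D^* A_2 x}\leq \norm{D}\,\norm{A_2 x}$ for all $x$, and the estimate holds with $c=\norm{D}$ (taking $c$ strictly positive if $\norm{D}=0$).

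The main obstacle is the reverse direction: unlike the algebraic factorization in the forward direction, one cannot bound $D$ directly from the range inclusion, and the minimal-norm selection is essential both to obtain a genuine (single-valued, linear) operator and to make the closed graph argument run, since it forces the limit $z$ back into $\overline{\range(A_2)}$ where the solution is unique. The remaining steps are routine manipulations of adjoints in Hilbert space.
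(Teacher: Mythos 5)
Your proof is correct, and it is genuinely different from what the paper does: the paper does not prove this lemma at all, but instead cites Lemma~2.5 of the localized potentials paper of Gebauer, where the result is established in the more general setting of \emph{reflexive Banach spaces} (with the remark that one implication even survives in non-reflexive spaces). Your argument is the classical Douglas factorization proof, and it leans on Hilbert space structure in exactly the two places where the Banach space version must work harder: in the forward direction you extend $S$ from $\overline{\range(A_2)}$ to all of $Z$ by letting it vanish on the orthogonal complement, which requires $\overline{\range(A_2)}$ to be complemented (automatic in Hilbert spaces, false in general Banach spaces); and in the reverse direction your minimal-norm selection $Dy\in\ker(A_2^*)^\perp=\overline{\range(A_2)}$ uses the orthogonal decomposition of the solution set $z_0+\ker(A_2^*)$, which is again a Hilbert space luxury. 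Within that setting your details are all sound: the kernel inclusion $\kernel(A_2)\subseteq\kernel(A_1)$ makes $S$ well defined with $\norm{S}\leq c$, the closed graph argument correctly exploits that $\overline{\range(A_2)}$ is closed to pin down the limit as $Dy$, and taking adjoints of $A_1^*=A_2^*D$ yields $A_1=D^*A_2$ and hence the estimate with $c=\norm{D}$ (your caveat for $\norm{D}=0$ is the right nitpick). What your route buys is a short, self-contained proof adequate for the paper's Hilbert space statement ($X=L^2(\Sigma)$ etc.); what the paper's citation buys is the stronger Banach space theorem without reproving anything, at the cost of an external dependency whose proof must avoid orthogonal projections altogether.
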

\begin{proof}
This is proven for reflexive Banach spaces in \cite[Lemma~2.5]{gebauer2008localized}. Note that one direction of the implication
also holds in non-reflexive Banach spaces, see \cite[Lemma~2.4]{gebauer2008localized}. 
\end{proof}

\begin{lemma}\label{lemma:vector_spaces}
Let $V,X,Y\subset Z$ be subspaces of a real vector space $Z$. If 
\[
X\cap Y= \{0\}, \quad \text{ and } \quad X\subseteq Y+ V,
\]
then $\dim(X)\leq \dim(V)$.
\end{lemma}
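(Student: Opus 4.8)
The plan is to exploit the quotient vector space $Z/Y$ and argue that the quotient map sends $X$ injectively into the image of $V$. Concretely, I would let $\pi\colon Z\to Z/Y$ denote the canonical projection, whose kernel is exactly $Y$. Restricting $\pi$ to $X$, its kernel is $X\cap Y$, which equals $\{0\}$ by hypothesis; hence $\pi|_X$ is injective and $\dim(X)=\dim(\pi(X))$. This is the one place where the assumption $X\cap Y=\{0\}$ enters, and it is precisely what converts "modding out by $Y$" into a dimension-preserving operation on $X$.

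Next I would transport the inclusion $X\subseteq Y+V$ through $\pi$. Since $\pi$ is linear and $\pi(Y)=\{0\}$, we get $\pi(X)\subseteq \pi(Y+V)=\pi(Y)+\pi(V)=\pi(V)$. Because $\pi$ is linear, the image $\pi(V)$ is spanned by the images of a spanning set of $V$, so $\dim(\pi(V))\leq \dim(V)$. Chaining the three relations yields
\[
\dim(X)=\dim(\pi(X))\leq \dim(\pi(V))\leq \dim(V),
\]
which is the desired conclusion.

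An equally short alternative, suitable if one prefers to avoid quotients, is a direct basis argument: writing each element of a basis of $X$ as $x_i=y_i+v_i$ with $y_i\in Y$ and $v_i\in V$, a relation $\sum_i c_i v_i=0$ forces $\sum_i c_i x_i=\sum_i c_i y_i\in X\cap Y=\{0\}$, and independence of the $x_i$ then gives $c_i=0$; thus the $v_i$ are independent in $V$ and $\dim(X)\leq\dim(V)$. I would still favor the quotient formulation as the primary proof because it is coordinate-free and uniform.

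The only point requiring a little care — and the reason I prefer the quotient argument — is the possibility that some of the spaces are infinite dimensional. The statement is vacuous when $\dim(V)=\infty$, but I would not even need to treat that as a separate case: the inequalities $\dim(X)=\dim(\pi(X))\leq\dim(\pi(V))\leq\dim(V)$ hold in the extended sense $[0,\infty]$ without any finiteness assumption, so no case distinction is necessary. There is no substantive obstacle here; the content of the lemma is entirely captured by the injectivity of $\pi|_X$ coming from $X\cap Y=\{0\}$.
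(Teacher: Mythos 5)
Your proposal is correct, and in fact it contains the paper's proof verbatim as your ``alternative'': the paper argues exactly as in your second paragraph, taking a linearly independent family $(x_j)_{j=1}^m\subset X$, writing $x_j=y_j+v_j$ with $y_j\in Y$, $v_j\in V$, and showing that a relation $\sum_j a_j v_j=0$ forces $\sum_j a_j x_j=\sum_j a_j y_j\in X\cap Y=\{0\}$, hence $a_j=0$, so that any $m$ independent vectors in $X$ produce $m$ independent vectors in $V$. Your primary argument via the quotient map $\pi\colon Z\to Z/Y$ is a genuinely different packaging of the same idea: injectivity of $\pi|_X$ from $X\cap\ker\pi=X\cap Y=\{0\}$, then $\pi(X)\subseteq\pi(Y+V)=\pi(V)$, then $\dim(X)=\dim(\pi(X))\leq\dim(\pi(V))\leq\dim(V)$. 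What the quotient formulation buys is that it is coordinate-free, avoids choosing a decomposition $x=y+v$ for each basis vector, and makes transparent that the inequality holds in the extended sense $[0,\infty]$ without case distinctions --- which matters here, since the paper applies the lemma in contrapositive form with $X=\range(L_{B\cap C}^*)$ infinite dimensional and $V$ finite dimensional. What the paper's basis argument buys is that it is entirely elementary (no quotient spaces needed) and, by working with arbitrary finite independent families rather than a full basis of $X$, it sidesteps any appeal to the existence of bases in infinite dimensions. Both proofs are complete and hinge on the same single use of the hypothesis $X\cap Y=\{0\}$.
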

\begin{proof}
Let $(x_j)_{j=1}^m\subset X$ be a linearly independent sequence of $m$ vectors. Then there exist
$(y_j)_{j=1}^m\subset Y$ and $(v_j)_{j=1}^m\subset V$ such that $x_j=y_j+v_j$ for all $j=1,\ldots,m$.
We will prove the assertion by showing that the sequence $(v_j)_{j=1}^m$ is linearly independent.
To this end let $\sum_{j=1}^m a_j v_j=0$ with $a_j\in \R$, $j=1,\ldots,m$. Then
\[
\sum_{j=1}^m a_j x_j = \sum_{j=1}^m a_j (y_j +v_j)= \sum_{j=1}^m a_j y_j\in Y,
\] 
so that $\sum_{j=1}^m a_j x_j=0$. Since  $(x_j)_{j=1}^m\subset X$ is linearly independent, it follows that 
$a_j=0$ for all $j=1,\ldots,m$. This shows that $(v_j)_{j=1}^m$ is linearly independent.
\end{proof}

\emph{Proof of theorem \ref{thm:localized_potentials}.}
Let $q\in L^\infty(\Omega) \setminus \{0\}$, for which $k>0$ is not a resonance.
Let $B,D\subseteq \overline\Omega$ be measurable, $B\setminus \overline{D}$ possess positive measure, and $\overline{\Omega}\setminus \overline{D}$ be connected to $\Sigma$. Using lemma \ref{lemma:connected} we obtain a 
closed set $C\subseteq \overline{\Omega}$ such that $B\cap C$ has positive measure, $C\cap \overline{D}=\emptyset$, and $\overline{\Omega}\setminus (\overline{D}\cup C)$ is connected to $\Sigma$.

Let $V\subset L^2(\Sigma)$ be a subspace with $d:=\dim(V)<\infty$. Since $V$ is finite dimensional and thus closed, there exists an orthogonal projection operator 
$P_V:\ L^2(\Sigma)\to L^2(\Sigma)$ with
\[
\range(P_V)=V, \quad P_V^2=P_V, \quad \text{ and } \quad P_V=P_V^*.
\]

From lemma~\ref{lemma:rangeLstar}, we have that $\range(L_{B\cap C}^*)\cap \range(L_D^*)=0$ and that $\range(L_{B\cap C}^*)$ is infinite dimensional.
So it follows from lemma~\ref{lemma:vector_spaces} that 
\[
\range(L_{B\cap C}^*)\not\subseteq \range(L_D^*)+ V=\range(L_D^*)+ \range(P_V^*).
\]
Since $B\cap C\subseteq B$ implies that $\range(L_{B\cap C}^*)\subseteq \range(L_{B}^*)$,
and since (using block operator matrix notation) 
\[
\range\left( \begin{pmatrix} L_D^* & P_V^*\end{pmatrix} \right)\subseteq \range(L_D^*)+ \range(P_V^*),
\]
we obtain that 
\[
\range(L_B^*)\not\subseteq \range\left( \begin{pmatrix} L_D^* & P_V^*\end{pmatrix} \right) 
=\range\left( \begin{pmatrix} L_D\\ P_V \end{pmatrix}^* \right).
\]
It then follows from lemma~\ref{lemma:fundamental} that there cannot exist a constant $C>0$ with
\[
\norm{L_B g}^2\leq C^2 \norm{\begin{pmatrix} L_D\\ P_V \end{pmatrix} g}^2
=C^2 \norm{L_D g}^2 + C^2 \norm{P_V g}^2 \quad \forall g\in L^2(\Sigma). 
\] 
Hence, there must exist a sequence $(\tilde g_k)_{k\in \N}\subseteq L^2(\Sigma)$ with
\[
\norm{L_B \tilde g_k}\to \infty, \quad \text{ and } \quad \norm{L_D \tilde g_k},\norm{P_V \tilde g_k}\to 0. 
\]
Thus, $g_k:=\tilde g_k-P_V \tilde g_k\in V^\perp\subseteq L^2(\Sigma)$ and 
\[
\norm{L_B g_k}\geq \norm{L_B \tilde g_k} - \norm{L_B} \norm{P_V \tilde g_k} \to \infty, 
\quad \text{ and } \quad \norm{L_D g_k}\to 0, 
\]
which shows the assertion. \hfill $\Box$

\emph{Proof of theorem \ref{thm:useful}.}
Let $q_1,q_2\in L^\infty(\Omega)$, for which $k>0$ is not a resonance, and let
$q_1(x)=q_2(x)$ for all $x$ (a.e.) outside a measurable set $D\subset \Omega$.
We denote by $L_{q_1,D}$ and $L_{q_2,D}$ the operators from lemma \ref{lemma:adjoint_L}
for $q=q_1$ and $q=q_2$. For $f\in L^2(D)$, we then have 
\[
L_{q_1,D}^* f=v_1|_{\Sigma}\quad \text{ and } \quad L_{q_2,D}^* f =v_2|_{\Sigma}
\]
where $v_1,v_2\in H^1(\Omega)$ solve
\begin{align*}
\Delta v_1 + k^2q_1 v_1&= f \chi_D, \quad \partial_\nu v_1|_{\partial \Omega}=0,\\
\Delta v_2 + k^2q_2 v_2&= f \chi_D, \quad \partial_\nu v_2|_{\partial \Omega}=0. 
\end{align*}
Since this also implies that
\begin{align*}
\Delta v_1 + k^2q_2 v_1&= f\chi_D + k^2 (q_2-q_1) v_1, \quad \partial_\nu v_1|_{\partial \Omega}=0,\\
\Delta v_2 + k^2q_1 v_2&= f\chi_D + k^2 (q_1-q_2) v_2, \quad \partial_\nu v_2|_{\partial \Omega}=0, 
\end{align*}
and $q_1-q_2$ vanishes (a.e.) outside $D$, it follows that
\[
v_1|_{\Sigma}=L_{q_2,D}^* (f+ k^2 (q_2-q_1) v_1) \quad \text{ and } \quad 
v_2|_{\Sigma}=L_{q_1,D}^* (f+ k^2 (q_1-q_2) v_2).
\]
Hence, $\range(L_{q_1,D}^*)=\range(L_{q_2,D}^*)$, so that the assertion follows from
lemma~\ref{lemma:fundamental}.
\hfill $\Box$.

\subsection{Localized potentials and Runge approximation}\label{subsect:localized_potentials_runge}

In this subsection we give an alternative proof of theorem \ref{thm:localized_potentials} that is based on a Runge
approximation argument that characterizes whether a given function $\varphi\in L^2(O)$ on a measurable subset $O\subseteq \Omega$ can be approximated by functions in a subspace of solutions of the Helmholtz equation in $\Omega$. Throughout this subsection let $q\in L^\infty(\Omega) \setminus \{0\}$, for which $k>0$ is not a resonance. We will prove the following theorem.

\begin{theorem}\label{thm:runge_main}
Let $D\subseteq \Omega$ be a measurable set and $C\subset \Omega$ be a closed ball for which $C\cap \overline{D}=\emptyset$, and $\overline{\Omega}\setminus (C\cup \overline{D})$ is connected to $\Sigma$. 

Then for any subspace $V\subset L^2(\Sigma)$ with $\dim V<\infty$, there exists a function $\varphi\in L^2(C\cup \overline{D})$
that can be approximated (in the $L^2(C\cup \overline{D})$-norm) by solutions $u\in H^1(\Omega)$ of 
\[
(\Delta +k^2 q) u=0\text{ in $\Omega$} \quad \text{ with } \quad \partial_\nu u|_{\partial\Omega\setminus \Sigma}=0, \quad \partial_\nu u|_{\Sigma}\in V^\perp,
\]
and fulfills 
\[
\varphi|_{\overline{D}}\equiv 0, \quad \text{ and } \quad \varphi|_{B}\not\equiv 0,
\]
for all subsets $B\subseteq C$ with positive measure.
\end{theorem}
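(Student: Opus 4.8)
The plan is to run a Hahn--Banach/duality argument in the Hilbert space $L^2(W)$, where $W:=C\cup\overline D$, in the spirit of a Runge approximation proof. Let $\mathcal S$ denote the linear space of $u\in H^1(\Omega)$ solving $(\Delta+k^2q)u=0$ in $\Omega$ with $\partial_\nu u|_{\partial\Omega\setminus\Sigma}=0$ and $\partial_\nu u|_\Sigma\in V^\perp$, and set $\mathcal M:=\{u|_W:\ u\in\mathcal S\}\subseteq L^2(W)$. Since $\overline{\mathcal M}=(\mathcal M^\perp)^\perp$, producing the desired $\varphi$ amounts to exhibiting an element of $\overline{\mathcal M}$ with the required support and non-vanishing properties, and the first task is to identify the annihilator $\mathcal M^\perp$.

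First I would compute $\mathcal M^\perp$ by Green's identity. For $\psi\in L^2(W)$ let $w_\psi\in H^1(\Omega)$ solve the source problem $(\Delta+k^2q)w_\psi=\psi\chi_W$ in $\Omega$, $\partial_\nu w_\psi|_{\partial\Omega}=0$, which is well posed because $k$ is not a resonance (Lemma~\ref{lemma:resonances}). Integrating by parts against $u\in\mathcal S$ and using $\partial_\nu u|_{\partial\Omega\setminus\Sigma}=0$ and $\partial_\nu w_\psi|_{\partial\Omega}=0$ gives $\int_W u\,\psi\dx=-\int_\Sigma w_\psi|_\Sigma\,\partial_\nu u|_\Sigma\dx[s]$. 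As $u$ runs over $\mathcal S$ the trace $\partial_\nu u|_\Sigma$ runs over all of $V^\perp$, so $\psi\in\mathcal M^\perp$ if and only if $w_\psi|_\Sigma\in(V^\perp)^\perp=V$. Writing $\Psi:\psi\mapsto w_\psi|_\Sigma$, this reads $\mathcal M^\perp=\Psi^{-1}(V)$. Since $V$ is finite dimensional, $\Psi|_{\mathcal M^\perp}$ has image contained in $V$, so $\mathcal M^\perp=\mathcal A_0+F$ where $\mathcal A_0:=\ker\Psi$ and $F$ is a complement of $\mathcal A_0$ in $\mathcal M^\perp$ with $\dim F\le\dim V$.

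The heart of the matter is the identification of $\mathcal A_0$. By the unique continuation property (Theorem~\ref{thm:UCP}(b)), $\psi\in\mathcal A_0$ forces $w_\psi\equiv0$ on the component of $\overline\Omega\setminus W$ connected to $\Sigma$; hence $w_\psi$ is supported in $W$, and because $C\subset\Omega$ is an interior ball disjoint from $\overline D$, the restriction $w_\psi|_C$ is an $H^1$ function supported in $C$ with $\Delta w_\psi|_C\in L^2$ (equivalently, vanishing Cauchy data on $\partial C$). Conversely, any such $w$ produces an element of $\mathcal A_0$, so $\{\psi|_C:\ \psi\in\mathcal A_0\}=\mathcal R_C:=\{(\Delta+k^2q)w:\ w\in H^1(\Omega),\ \Delta w\in L^2(\Omega),\ \operatorname{supp}w\subseteq C\}$. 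I would then take $\varphi$ to be the restriction to $C$ of a nonzero solution of $(\Delta+k^2q)\varphi=0$ in a neighborhood of $C$, extended by $0$ on $\overline D$. A double integration by parts, in which both boundary terms vanish thanks to the zero Cauchy data of the generators of $\mathcal R_C$, shows $\int_C\varphi\,(\Delta+k^2q)w\dx=0$, i.e. $\varphi\perp\mathcal R_C$ and hence $\varphi\perp\mathcal A_0$.

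It remains to enforce orthogonality to $F$ and to verify the non-vanishing. The space $\mathcal H$ of restrictions to $C$ of Helmholtz solutions near $C$ is infinite dimensional, and imposing the at most $\dim V$ linear conditions $\varphi\perp F|_C$ cuts out a subspace of finite codimension, which is therefore still infinite dimensional; I choose any nonzero $\varphi$ in it. Since $\varphi$ is supported in $C$, orthogonality to $F|_C$ and to $\mathcal R_C$ yields $\varphi\perp\mathcal A_0+F=\mathcal M^\perp$, so $\varphi\in\overline{\mathcal M}$, while $\varphi|_{\overline D}\equiv0$ by construction. Finally $\varphi$ is a nonzero Helmholtz solution on a ball, so by unique continuation (Theorem~\ref{thm:UCP}(a), applied on that ball) it cannot vanish on any set of positive measure, giving $\varphi|_B\not\equiv0$ for every positive-measure $B\subseteq C$. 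The one genuinely delicate point I anticipate is the annihilator computation together with the regularity needed to justify the integrations by parts (well-posedness of the source problem off resonance, the $H^2$-regularity of the supported generators near $\partial C$, and the trace identities); the finite-dimensional correction for $V\neq\{0\}$ is essentially free, precisely because \emph{every} nonzero Helmholtz solution is automatically non-vanishing a.e., so the extra $\dim V$ linear constraints cannot spoil that property.
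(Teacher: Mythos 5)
Your proposal is correct and takes essentially the same route as the paper: your duality computation of $\mathcal{M}^\perp$ via the source problem and the map $\Psi:\psi\mapsto w_\psi|_\Sigma$ reproduces lemma~\ref{lemma:runge} and lemma~\ref{lemma:runge_finmany_w}, with your $\mathcal{A}_0$ and $F$ corresponding to $W_0$ and its at most $\dim(V)$-dimensional complement under the off-resonance bijection $\psi\leftrightarrow w_\psi$. Your construction of $\varphi$ --- a nontrivial Helmholtz solution on the ball extended by zero on $\overline{D}$, orthogonal to $\mathcal{A}_0$ by double integration by parts against the vanishing Cauchy data on $\partial C$ obtained from unique continuation, adjusted by dimension counting to meet the finitely many constraints from $F$, and non-vanishing on positive-measure subsets of $C$ by theorem~\ref{thm:UCP}(a) --- is precisely the paper's argument.
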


Before we prove theorem~\ref{thm:runge_main}, let us first show that it implies
theorem \ref{thm:localized_potentials}.

\begin{corollary}
Let $B,D\subseteq \overline\Omega$ be measurable, $B\setminus \overline{D}$ possess positive measure, and $\overline{\Omega}\setminus \overline{D}$ be connected to $\Sigma$. Then for any subspace $V\subset L^2(\Sigma)$ with $\dim V<\infty$, there exists a sequence
$(g_j)_{j\in \N}\subset V^\perp$ such that
\[
\int_{B} |u_{q}^{(g_j)}|^2 \dx \to \infty, \quad \text{ and } \quad \int_{D} |u_{q}^{(g_j)}|^2 \dx \to 0,  
\]
where $u_{q}^{(g_j)}\in H^1(\Omega)$ solves the Helmholtz equation \req{Helmholtz} with Neumann boundary data $g_j$.
\end{corollary}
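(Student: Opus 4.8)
The plan is to deduce the blow-up/decay statement directly from the Runge approximation result of Theorem~\ref{thm:runge_main} by an elementary rescaling, in close analogy with the way the first proof extracted a blow-up sequence from the failure of a norm estimate.

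First I would invoke Lemma~\ref{lemma:connected} to convert the geometric hypotheses on $B$ and $D$ into the concrete configuration required by Theorem~\ref{thm:runge_main}. Since $B\setminus\overline D$ has positive measure and $\overline\Omega\setminus\overline D$ is connected to $\Sigma$, that lemma produces a closed ball $C$ with $B\cap C$ of positive measure, $C\cap\overline D=\emptyset$, and $\overline\Omega\setminus(\overline D\cup C)$ connected to $\Sigma$. Applying Theorem~\ref{thm:runge_main} with this $C$ and the given finite-dimensional $V$, I obtain a function $\varphi\in L^2(C\cup\overline D)$ that (i) vanishes a.e.\ on $\overline D$, (ii) is nonzero on every positive-measure subset of $C$, so in particular $\int_{B\cap C}|\varphi|^2\dx=:a^2>0$, and (iii) is the $L^2(C\cup\overline D)$-limit of a sequence of solutions $u_j=u_q^{(g_j^0)}$ of \req{Helmholtz} with Neumann data $g_j^0\in V^\perp$.

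Next I would read off the asymptotics of this approximating sequence. Because $u_j\to\varphi$ in $L^2(C\cup\overline D)$ and $D\subseteq\overline D\subseteq C\cup\overline D$, property (i) gives $\| u_j \|_{L^2(D)}\to\| \varphi \|_{L^2(D)}=0$, while property (ii) gives $\| u_j \|_{L^2(B\cap C)}\to a>0$. Thus the approximating solutions already decay on $D$ and stay bounded below on $B\cap C$; the only remaining task is to amplify them so that the energy on $B\cap C$ blows up while the energy on $D$ still tends to zero.

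The amplification is a scalar rescaling. Since $\| u_j \|_{L^2(D)}\to 0$, there is a sequence of positive scalars $t_j\to\infty$ with $t_j\| u_j \|_{L^2(D)}\to 0$ (for instance $t_j=\min(j,\| u_j \|_{L^2(D)}^{-1/2})$, taking $t_j=j$ whenever $\| u_j \|_{L^2(D)}=0$, in which case $\int_D|\cdot|^2$ is automatically zero). Setting $g_j:=t_j g_j^0$, which lies in $V^\perp$ since $V^\perp$ is a subspace, linearity of the solution operator yields $u_q^{(g_j)}=t_j u_j$ on $C\cup\overline D$, whence $\int_D|u_q^{(g_j)}|^2\dx=(t_j\| u_j \|_{L^2(D)})^2\to 0$ and $\int_B|u_q^{(g_j)}|^2\dx\geq(t_j\| u_j \|_{L^2(B\cap C)})^2\to\infty$, which is the assertion. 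I do not expect any genuine obstacle in this deduction: the only points needing a little care are that $t_j$ can simultaneously diverge and tame $\| u_j \|_{L^2(D)}$ (elementary), and that the amplified data remain in $V^\perp$ (immediate). All the analytic substance—the approximation together with the nonvanishing of $\varphi$ on $C$—is already packaged inside Theorem~\ref{thm:runge_main}.
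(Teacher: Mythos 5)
Your proposal is correct and takes essentially the same route as the paper's own proof: both obtain the ball $C$ from lemma~\ref{lemma:connected}, apply theorem~\ref{thm:runge_main} to get solutions with Neumann data in $V^\perp$ that converge to $\varphi$ in $L^2(C\cup\overline D)$ (hence decay on $\overline D$ and stay bounded below on $B\cap C$), and then rescale the Neumann data to force blow-up on $B$. The only cosmetic difference is your scaling $t_j=\min\bigl(j,\norm{u_j}_{L^2(D)}^{-1/2}\bigr)$, which explicitly covers the degenerate case $\norm{u_j}_{L^2(D)}=0$, whereas the paper divides directly by $\sqrt{\norm{\tilde u^{(j)}|_{\overline{D}}}_{L^2(\overline{D})}}$.
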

\begin{proof}
As in lemma~\ref{lemma:connected}, we can find a closed ball $C\subset \Omega$, so that $B\cap C$ has positive measure, $C\cap \overline{D}=\emptyset$, and that $\overline{\Omega}\setminus (\overline{D}\cup C)$ is connected to $\Sigma$.
Using theorem \ref{thm:runge_main}, there exists $\varphi\in L^2(C\cup \overline D)$ and
a sequence of solutions
$(\tilde u^{(j)})_{j\in \N}\subset H^1(\Omega)$ of $(\Delta +k^2 q) \tilde
u^{(j)}=0$ in $\Omega$ with $\partial_\nu \tilde u^{(j)}|_{\partial\Omega\setminus \Sigma}=0$, $\partial_\nu \tilde u^{(j)}|_{\Sigma}\in V^\perp$, 
\[
\norm{\tilde u^{(j)}|_{B\cap C}}_{L^2(B\cap C)}\to \norm{\varphi}_{L^2(B\cap C)}>0, \quad \text{ and } \quad \norm{\tilde u^{(j)}|_{\overline{D}}}_{L^2(\overline{D})}\to 0.
\]
Obviously, the scaled sequence 
\[
g^{(j)}:=\frac{\partial_\nu \tilde u^{(j)}}{\sqrt{\norm{\tilde u^{(j)}|_{\overline{D}}}_{L^2(\overline{D})}}}\in V^\perp
\]
fulfills the assertion.
\end{proof}

To prove theorem~\ref{thm:runge_main}, we start with an abstract characterization showing whether a given function $\varphi\in L^2(O)$ on a measurable set $O\subseteq \Omega$ is a limit of functions from a subspace of solutions of the Helmholtz equation in $\Omega$. For the sake of readability, we write $v\chi_O\in L^2(\Omega)$ for the zero extension of a function $v\in L^2(O)$, and we write the dual pairing on $H^{-1/2}(\partial \Omega)\times H^{1/2}(\partial \Omega)$ as an integral over $\partial \Omega$.

\begin{lemma}\label{lemma:runge}
Let $O\subseteq \Omega$ be measurable. Let $H\subseteq H^1(\Omega)$ be a (not necessarily closed) subspace of solutions of $(\Delta +k^2 q) u=0$ in $\Omega$.

A function $\varphi\in L^2(O)$ can be approximated on $O$ by solutions $u\in H$ in the sense that
\[
\inf_{u\in H} \norm{\varphi-u}_{L^2(O)}=0
\]
if and only if
$
\int_O \varphi v \dx=0 
$
for all $v\in L^2(O)$ for which the solution $w\in H^1(\Omega)$ of
\begin{equation}\labeq{runge:def_w}
(\Delta+k^2q)w=v\chi_O \quad \text{ and } \quad \partial_\nu w|_{\partial \Omega}=0
\end{equation}
fulfills that $\int_{\partial \Omega} \partial_\nu u|_{\partial \Omega} w|_{\partial \Omega} \dx[s]=0  \text{ for all } u\in H$.
\end{lemma}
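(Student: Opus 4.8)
The plan is to recognize this as a standard duality (Hahn--Banach) statement, where the only genuine work is a Green's identity connecting the interior pairing $\int_O uv\dx$ to the boundary pairing $\int_{\partial\Omega}\partial_\nu u\, w|_{\partial\Omega}\dx[s]$.

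First I would set $\mathcal H_O:=\{u|_O:\ u\in H\}\subseteq L^2(O)$, so that the approximation property $\inf_{u\in H}\norm{\varphi-u}_{L^2(O)}=0$ is precisely the statement $\varphi\in\overline{\mathcal H_O}$, the closure being taken in $L^2(O)$. Since $L^2(O)$ is a Hilbert space and $\overline{\mathcal H_O}=(\mathcal H_O^\perp)^\perp$, this is equivalent to $\int_O\varphi v\dx=0$ for every $v\in\mathcal H_O^\perp$, i.e.\ for every $v\in L^2(O)$ satisfying $\int_O uv\dx=0$ for all $u\in H$. It then remains only to identify this annihilator condition on $v$ with the condition stated in the lemma.

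The heart of the argument is a Green-type identity. Given $v\in L^2(O)$, let $w\in H^1(\Omega)$ be the (unique, by the non-resonance assumption and lemma~\ref{lemma:resonances}) solution of \req{runge:def_w}. For $u\in H$ I would test the weak formulation \req{Varform} of $(\Delta+k^2q)u=0$ with $\varphi$-slot $w$, and test the weak formulation of \req{runge:def_w} with $u$; here $\partial_\nu u$ is the generalized normal derivative, well defined in $H^{-1/2}(\partial\Omega)$ because $\Delta u=-k^2qu\in L^2(\Omega)$. Combining the two identities, the symmetric terms $\int_\Omega\nabla u\cdot\nabla w\dx$ and $k^2\int_\Omega q uw\dx$ cancel, the term involving $\partial_\nu w$ drops out since $\partial_\nu w|_{\partial\Omega}=0$, and one is left with
\[
\int_O uv\dx=-\int_{\partial\Omega}\partial_\nu u\, w|_{\partial\Omega}\dx[s].
\]
In particular $\int_O uv\dx=0$ if and only if $\int_{\partial\Omega}\partial_\nu u\, w|_{\partial\Omega}\dx[s]=0$, so the two families of admissible test functions $v$ coincide, and the lemma follows by combining this equivalence with the Hilbert-space duality of the first step.

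The main obstacle is making this last integration by parts rigorous at the level of $H^1(\Omega)$ functions: one must interpret both $(\Delta+k^2q)u=0$ and \req{runge:def_w} in the variational sense \req{Varform}, justify that $\partial_\nu u$ and $\partial_\nu w$ exist as elements of $H^{-1/2}(\partial\Omega)$ (using $\Delta u,\Delta w\in L^2(\Omega)$) so that the boundary pairings are well defined, and then verify that the bilinear terms cancel exactly. Once the two weak formulations are written with $u$ and $w$ as mutual test functions, the cancellation and the resulting identity are immediate.
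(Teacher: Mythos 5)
Your proposal is correct and matches the paper's own proof essentially verbatim: the paper also sets $\mathcal{R}=\{u|_O:\ u\in H\}$, uses $\overline{\mathcal{R}}=(\mathcal{R}^\perp)^\perp$ in $L^2(O)$, and identifies $\mathcal{R}^\perp$ via the same Green's identity $\int_O uv\dx=-\int_{\partial\Omega}\partial_\nu u|_{\partial\Omega}\, w|_{\partial\Omega}\dx[s]$, with the $\partial_\nu w$ term dropping out. Your added care about unique solvability of \req{runge:def_w} (non-resonance) and interpreting $\partial_\nu u\in H^{-1/2}(\partial\Omega)$ is exactly the justification the paper leaves implicit.
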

\begin{proof}
Let 
\[
\mathcal{R} := \{ u|_O \,;\, u \in H \}\subseteq L^2(O).
\]
Let $v\in L^2(O)$ and $w\in H^1(\Omega)$ solve \req{runge:def_w}. Then $v\in \mathcal{R}^\perp$ if and only if, for all $u\in H$,
\begin{align*}
0&=\int_O u v\dx = \int_\Omega u (\Delta+k^2q)w \dx = \int_\Omega w(\Delta+k^2q)u\dx - \int_{\partial \Omega} \partial_\nu u|_{\partial \Omega} w|_{\partial \Omega}\dx[s]\\
&= - \int_{\partial \Omega} \partial_\nu u|_{\partial \Omega} w|_{\partial \Omega}\dx[s].
\end{align*}
Hence, the assertion follows from $\overline{\mathcal{R}}=(\mathcal{R}^\perp)^\perp$ (where orthogonality and closures are taken with respect to the $L^2(O)$-inner product).
\end{proof}

Now we characterize the functions $w$ appearing in lemma~\ref{lemma:runge} for
a setting that will be considered in the proof of theorem~\ref{thm:runge_main}.
\begin{lemma}\label{lemma:runge_finmany_w}
Let $V$ be a finite-dimensional subspace of $L^2(\Sigma)$, and $O\subset \Omega$ be a closed set 
for which the complement $\overline \Omega \setminus O$ is connected to $\Sigma$.

We define the spaces
\begin{align*}
W & :=\{ w\in H^1(\Omega):\ \exists v\in L^2(O)\text{ s.t. } (\Delta+k^2q)w=v\chi_O,\ \partial_\nu w|_{\partial \Omega}=0,\  w|_{\Sigma}\in V \},\\
W_0 & :=\{ w\in H^1(\Omega):\ \exists v\in L^2(O)\text{ s.t. } (\Delta+k^2q)w=v\chi_O,\ \partial_\nu w|_{\partial \Omega}=0,\  w|_{\Sigma}=0 \}.
\end{align*}
Then the codimension $d:=\dim(W/W_0)$ of $W_0$ in $W$ is at most $\dim(V)$, i.e., there 
exists functions $w_1,\ldots,w_d\in W$ such that every $w\in W$ can be written as
\[
w=w_0+\sum_{j=1}^d a_j w_j
\]
with ($w$-dependent) $w_0\in W_0$ and $a_1,\ldots,a_d\in \R$.
\end{lemma}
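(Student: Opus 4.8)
The plan is to realize $W_0$ as the kernel of a single linear map from $W$ into the finite-dimensional space $V$, so that the codimension bound becomes an immediate consequence of the first isomorphism theorem (rank--nullity). No PDE or unique continuation input is needed for this lemma; it is a purely algebraic statement about the two solution classes.

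First I would introduce the boundary-trace map
\[
T:\ W\to V, \qquad Tw := w|_\Sigma.
\]
This is well defined precisely because the defining condition for membership in $W$ is that $w|_\Sigma\in V$, and it is manifestly linear. The one point worth a line of care is that $T$ depends only on $w$ and not on the auxiliary datum $v\in L^2(O)$ (which need not be unique): since $Tw$ is literally the trace of $w$, the non-uniqueness of $v$ never enters.

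The key step is to identify $\ker T$ with $W_0$. On the one hand, if $w\in W$ with $Tw=0$, then $w$ admits some $v\in L^2(O)$ with $(\Delta+k^2q)w=v\chi_O$ and $\partial_\nu w|_{\partial\Omega}=0$, while $w|_\Sigma=0$; hence $w\in W_0$. Conversely, every $w\in W_0$ lies in $W$ (because $0\in V$, as $V$ is a subspace) and satisfies $Tw=0$. Therefore $\ker T=W_0$. The first isomorphism theorem then gives
\[
W/W_0 = W/\ker T \;\cong\; \range(T)\subseteq V,
\]
so that $d:=\dim(W/W_0)=\dim\range(T)\leq \dim V$. Choosing $w_1,\dots,w_d\in W$ whose cosets form a basis of $W/W_0$ yields the asserted decomposition $w=w_0+\sum_{j=1}^d a_j w_j$ with $w_0\in W_0$ and $a_j\in\R$.

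I do not expect any real obstacle: the argument is elementary linear algebra once $T$ is set up correctly, and the heavier hypotheses of the lemma (the closedness of $O$ and the connectedness of $\overline\Omega\setminus O$ to $\Sigma$) are not used here but rather enter in the proof of Theorem~\ref{thm:runge_main}, for which this codimension estimate is a preparatory ingredient. The only genuine thing to verify is the kernel identification above, which is routine.
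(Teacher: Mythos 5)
Your proof is correct and takes essentially the same route as the paper's: both realize $W_0$ as the kernel of the trace map $w\mapsto w|_{\Sigma}$ from $W$ into the finite-dimensional space $V$ and conclude by rank--nullity that $\dim(W/W_0)=\dim\range(\gamma_\Sigma)\leq \dim V$. Your side remark is in fact slightly sharper than the paper's write-up: the kernel identification $\ker(\gamma_\Sigma|_W)=W_0$ is purely definitional (since $0\in V$), whereas the paper invokes the unique continuation result of theorem~\ref{thm:UCP} at this point, which is not actually needed for this lemma and only enters later, in the proof of theorem~\ref{thm:runge_main}, where one deduces that elements of $W_0$ have vanishing Cauchy data on $\partial C$.
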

\begin{proof}
$W_0$ is the kernel of the restricted trace operator
\[
\gamma_{\Sigma}|_W: W\to V,\quad w\mapsto w|_{\Sigma}.
\]
Hence, the codimension of $W_0$ as a subspace of $W$ is
\[
\dim(W / W_0)= \dim(\range(\gamma_{\Sigma}|_W))\leq \dim(V),
\]
which proves the assertion.
\end{proof}

\emph{Proof of theorem~\ref{thm:runge_main}.}
Let $D\subseteq \Omega$ be a measurable set and $C\subset \Omega$ be a closed ball for which $C\cap \overline{D}=\emptyset$, and $\overline{\Omega}\setminus (C\cup \overline{D})$ is connected to $\Sigma$. Let $V$ be a finite-dimensional subspace of $L^2(\Sigma)$.

To apply lemma \ref{lemma:runge}, we set $O:=C\cup \overline{D}$ and 
\[
H:=\left\{ u\in H^1(\Omega):\ (\Delta +k^2 q) u=0\text{ in $\Omega$}, \quad \partial_\nu u|_{\partial\Omega\setminus \Sigma}=0, \quad  \partial_\nu u|_{\Sigma}\in V^\perp \right\}.
\]
Then $w\in H^1(\Omega)$ fulfills \req{runge:def_w} and 
 $\int_{\partial \Omega} \partial_\nu u|_{\partial \Omega} w|_{\partial \Omega} \dx[s]=0  \text{ for all } u\in H$
if and only if $w\in W$, with $W$ defined in lemma~\ref{lemma:runge_finmany_w}. Hence, by lemma \ref{lemma:runge}, 
a function $\varphi\in L^2(C\cup \overline{D})$ can be approximated by solutions $u\in H$ if and only if
\begin{equation}\labeq{runge_main_property_varphi}
\int_{C\cup \overline{D}} \varphi (\Delta +k^2 q) w \dx = 0 \quad \text{ for all } w\in W.
\end{equation}
Thus, the assertion of theorem~\ref{thm:runge_main} follows if we can show that there exists 
$\varphi\in L^2(C\cup \overline{D})$ that fulfills \req{runge_main_property_varphi} and vanishes on $D$ but not on any subset of $C$ having positive measure.

To construct such a $\varphi$, we first note that the Helmholtz equation \req{Helmholtz} on $\Omega$ is uniquely solvable for all Neumann data $g\in L^2(\Sigma)$,
and by unique continuation, linearly independent Neumann data yield solutions whose restrictions to the open ball $C^\circ$ are linearly independent. 
Hence, there exists an infinite number of linearly independent solutions
\begin{align}\labeq{thm_rung_main_varphi_j}
\varphi_j\in H^1(C^\circ) \quad \text{ with } \quad (\Delta + k^2 q)\varphi_j =0 \quad \text{ in $C^\circ$},\quad j\in \N.
\end{align}
We extend $\varphi_j$ by zero on $\overline{D}\cup \partial C$ to $\varphi_j\in L^2(O)$. 

Every $w_0\in W_0$, with $W_0$ from lemma~\ref{lemma:runge_finmany_w}, must possess zero Cauchy data $w_0|_{\partial C}=0$ and $\partial_\nu w_0|_{\partial C}=0$ by unique continuation. Hence, for all $w_0\in W_0$, and $j\in \N$,
\begin{align*}
\int_{O} \varphi_j (\Delta + k^2q) w_0 \dx & = \int_{C} \varphi_j (\Delta + k^2q) w_0 \dx\\
&=\int_{\partial C} \left(\varphi_j|_{\partial C} \partial_\nu w_0|_{\partial C} - \partial_\nu \varphi_j|_{\partial C} w_0|_{\partial C} \right)\dx[s] = 0.
\end{align*}

Moreover, by a dimensionality argument, there must exist a non-trivial finite linear combination $\varphi$ of
the infinitely many linearly independent $\varphi_j$, so that 
\[
\int_{O} \varphi (\Delta + k^2q) w_k \dx = 0
\]
for the finitely many functions $w_1,\ldots,w_d\in W$ from lemma~\ref{lemma:runge_finmany_w}. Thus, using lemma~\ref{lemma:runge_finmany_w}, we have constructed a function
$\varphi\in L^2(O)$ with $\varphi|_{\overline D}\equiv 0$, $\varphi|_{C^\circ}\not\equiv 0$, and
\[
\int_{O} \varphi (\Delta + k^2q) w \dx = 0 \quad \text{ for all } w\in W=W_0+\spn\{w_1,\ldots,w_d\}.
\]
Moreover, $\varphi$ solves \req{thm_rung_main_varphi_j}, so that the unique continuation result from measurable sets in theorem~\ref{thm:UCP} also yields that $\varphi|_{B}\not\equiv 0$ for all $B\subseteq C^\circ$ with positive measure. Since $\partial C$ is a null set, the latter also holds for all
$B\subseteq C$ with positive measure. As explained above, the assertion of theorem~\ref{thm:runge_main} now follows from lemma \ref{lemma:runge}.
$\Box$

\kommentar{
\begin{example}
Let $O$ be an open ball with $C:=\overline{O}\subseteq \Omega$, and let
\[
H:=\{ u\in H^1(\Omega):\ (\Delta +k^2 q) u=0\quad \text{ and } \quad \partial_\nu u|_{\partial \Omega}\in L^2(\partial \Omega)\}.
\]
Given $v\in L^2(O)$, a function $w\in H^1(\Omega)$ solves
\begin{equation*}
(\Delta+k^2q)w=v\chi_O\text{ in $\Omega$}, \quad \partial_\nu w|_{\partial \Omega}=0, \quad \text{ and } \quad \int_{\partial \Omega} \partial_\nu u|_{\partial \Omega} w|_{\partial \Omega} \dx[s]=0 
\end{equation*}
for all $u\in H$ if and only if 
\begin{equation*}
(\Delta+k^2q)w=v\chi_O\text{ in $\Omega$}, \quad \partial_\nu w|_{\partial \Omega}=0, \quad \text{ and } \quad \partial_\nu w|_{\partial \Omega}=0.
\end{equation*}
By unique continuation (cf. theorem~\ref{thm:UCP}) this is equivalent to
\begin{equation}\labeq{example:Runge_w_characterization}
(\Delta+k^2q)w=v\chi_O\text{ in $\Omega$}, \quad w|_{\Omega\setminus C}=0.
\end{equation}
Hence, a function $\varphi\in L^2(O)$ can be approximated on $O$ by solutions $u\in H$ if and only if
\begin{align*}
0=\int_O \varphi v \dx=\int_O \varphi (\Delta+k^2q)w
\end{align*}
for all $v\in L^2(O)$ and $w$ fulfilling \req{example:Runge_w_characterization}.

Every function in $\psi\in C_0^\infty(O)$ can be extended
by zero to a function $w:=\psi \chi_O$ fulfilling \req{example:Runge_w_characterization}. Thus, if $\varphi\in L^2(O)$ can be approximated on $O$ by solutions in $H$, it follows that
\begin{align*}
\int_O \varphi (\Delta+k^2q)\psi \dx=0 \quad \text{ for all } \psi\in C_0^\infty
\end{align*}
so that $(\Delta+k^2q)\varphi=0$ on $O$.

On the other hand, for every $w$ fulfilling \req{example:Runge_w_characterization} there exists a sequence 
$(\psi_k)_{k\in \N}\in C_0^\infty(O)$ so that $\psi_k\chi_O\to w$ in $H^1(\Omega)$. Hence, if $(\Delta+k^2q)\varphi=0$ on $O$, then
\begin{align*}
\int_O \varphi v \dx&=\int_O \varphi (\Delta+k^2q)w\dx= \int_\Omega \varphi (\Delta+k^2q)w\dx\\
&= -\left(\int_\Omega \nabla \varphi \cdot \nabla w - k^2q \varphi w \right)\dx\\
&= -\lim_{k\to \infty} \left(\int_\Omega \nabla \varphi \cdot \nabla \psi_k - k^2q \varphi \psi_k \right)\dx=0.
\end{align*} 

This shows that a function $\varphi\in L^2(O)$ can be approximated on $O$ by solutions $u\in H$ if and only if
\[
(\Delta + k^2q) \varphi = 0.
\]
\end{example}
}

\section{Local uniqueness for the Helmholtz equation}
\label{Sec:Local}

We are now able to prove the first main result in this work, announced as theorem \ref{intro:thm:local_uniqueness} in the introduction,
and extend the local uniqueness result in \cite{harrach2017local} to the case of negative potentials, and $n\geq 2$.

As in subsection \ref{Subsec:NtD}, let $\Omega\subset \R^{n}$, $n\geq 2$ denote
a bounded Lipschitz domain, and let 
$\Sigma\subseteq \partial \Omega$ be an arbitrarily small, relatively open part
of the boundary $\partial \Omega$.
For $q_1,q_2\in L^\infty(\Omega)$ let 
\[
\Lambda(q_1),\Lambda(q_2):\ L^2(\Sigma)\to L^2(\Sigma), \quad \Lambda(q_1):\ g\mapsto u_1|_{\Sigma},
 \quad \Lambda(q_2):\ g\mapsto u_2|_{\Sigma}
\]
be the Neumann-to-Dirichlet operators for the Helmholtz equation
\begin{equation}\labeq{Uniqueness:Helmholtz}
(\Delta +k^2 q) u = 0 \quad \text{ in } \Omega, \quad \partial_\nu
u|_{\partial \Omega}=\left\{ \begin{array}{l l} g & \text{ on $\Sigma$,}\\0
& \text{else,}\end{array}\right.
\end{equation}
with $q=q_1$, resp., $q=q_2$, and let $k>0$ be not a resonance, neither for $q_1$ nor $q_2$. 

\begin{theorem}\label{thm:local}
Let $q_1\leq q_2$ in a relatively open set $O\subseteq \overline \Omega$ that is connected to $\Sigma$. Then
\[
q_1|_O\not\equiv  q_2|_O \quad \text{ implies } \quad \Lambda(q_1)\neq \Lambda(q_2).
\] 
Moreover, in that case, $\Lambda(q_2)-\Lambda(q_1)$ has infinitely many positive eigenvalues.
\end{theorem}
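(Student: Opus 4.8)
The plan is to combine the quantitative monotonicity estimate of Theorem~\ref{thm:monotonicity} with the localized potentials of Theorem~\ref{thm:localized_potentials}, exploiting that the coefficient difference has a definite sign on $O$. First I would isolate a region of strict positivity. Since $q_1\le q_2$ on $O$ and $q_1|_O\not\equiv q_2|_O$, the set $\{x\in O:\ q_2(x)-q_1(x)>0\}$ has positive measure, so there is a constant $c>0$ for which $\{x\in O:\ q_2(x)-q_1(x)\ge c\}$ has positive measure. I would pick a Lebesgue density point $x_0$ of this set lying in the open set $\Omega$ (possible since $\partial\Omega$ is a null set and $O$ is a relative neighborhood of its density points), and take a small ball $B$ with $\overline B\subset O\cap\Omega$, so that $B':=B\cap\{q_2-q_1\ge c\}$ has positive measure. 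I would then set $D:=\overline\Omega\setminus O$. Because $O$ is relatively open in $\overline\Omega$, the set $D$ is closed, hence $\overline D=D$ and $\overline\Omega\setminus\overline D=O$, which is connected to $\Sigma$ by hypothesis; moreover $B'\subseteq O$ gives $B'\cap\overline D=\emptyset$, so $B'\setminus\overline D=B'$ has positive measure. Thus $B'$ and $D$ satisfy the hypotheses of Theorem~\ref{thm:localized_potentials}.

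Next I would blow up the monotonicity functional. Let $A:=\Lambda(q_2)-\Lambda(q_1)$ and let $V_0\subset L^2(\Sigma)$ with $\dim V_0\le d(q_2)$ be the subspace furnished by Theorem~\ref{thm:monotonicity}, so that $\int_\Sigma g\,Ag\dx[s]\ge\int_\Omega k^2(q_2-q_1)|u^{(g)}_1|^2\dx$ for all $g\in V_0^\perp$. Writing $M:=\norm{q_2-q_1}_{L^\infty(\Omega)}$ and using $q_2-q_1\ge0$ on $O$, $q_2-q_1\ge c$ on $B'$, and $\Omega\setminus O\subseteq D$, I obtain for every $g\in V_0^\perp$
\[
\int_\Sigma g\,Ag\dx[s]\ \ge\ ck^2\int_{B'}|u^{(g)}_1|^2\dx-Mk^2\int_{D}|u^{(g)}_1|^2\dx.
\]
Given any finite-dimensional $V\subset L^2(\Sigma)$, applying Theorem~\ref{thm:localized_potentials} with $q=q_1$, the sets $B'$ and $D$, and the subspace $V+V_0$ yields a sequence $(g_j)\subset(V+V_0)^\perp\subseteq V_0^\perp\cap V^\perp$ with $\int_{B'}|u^{(g_j)}_1|^2\dx\to\infty$ and $\int_D|u^{(g_j)}_1|^2\dx\to0$. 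The displayed inequality then forces $\int_\Sigma g_j\,Ag_j\dx[s]\to\infty$. In particular, for \emph{every} finite-dimensional $V$ there exists $g\in V^\perp$ with $\langle Ag,g\rangle>0$.

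Finally I would convert this into a statement about eigenvalues. Suppose $A$ had only finitely many positive eigenvalues, and let $V_+$ be the (finite-dimensional) sum of the corresponding eigenspaces. Since $A$ is compact and self-adjoint, $V_+^\perp$ is $A$-invariant and $A|_{V_+^\perp}$ has spectrum in $(-\infty,0]$, whence $\langle Ag,g\rangle\le0$ for all $g\in V_+^\perp$. Taking $V=V_+$ in the previous step produces $g\in V_+^\perp$ with $\langle Ag,g\rangle>0$, a contradiction. Hence $A=\Lambda(q_2)-\Lambda(q_1)$ has infinitely many positive eigenvalues, and in particular $A\neq0$, i.e.\ $\Lambda(q_1)\neq\Lambda(q_2)$; equivalently, the same conclusion can be read off from Lemma~\ref{lemma_eigenvalues_quadform}(b).

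The main obstacle I anticipate is not any single estimate but the bookkeeping that makes the localized potentials applicable: one must pin down $B'$ and $D$ so that the sign-indefinite region $\Omega\setminus O$ is entirely absorbed into the smallness set $D$ while $\overline\Omega\setminus\overline D$ remains connected to $\Sigma$, and one must carry the fixed finite-codimension constraint $V_0$ coming from monotonicity through the localized potentials construction by enlarging the excluded subspace to $V+V_0$. The clean choice $D=\overline\Omega\setminus O$ is exactly what reduces the connectivity hypothesis of Theorem~\ref{thm:localized_potentials} to the hypothesis that $O$ is connected to $\Sigma$.
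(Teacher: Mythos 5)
Your proposal is correct and follows essentially the same route as the paper's proof: extract a positive-measure set where $q_2-q_1\ge c>0$, apply the monotonicity estimate of Theorem~\ref{thm:monotonicity}, and contradict it with the localized potentials of Theorem~\ref{thm:localized_potentials} for the choice $D=\overline\Omega\setminus O$. The only cosmetic difference is that the paper packages the finite-dimensional bookkeeping through transitivity of $\leq_\text{fin}$ (Lemma~\ref{lemma:order_transitive} and Corollary~\ref{cor_order_characterization}), whereas you enlarge the excluded subspace to $V+V_0$ and conclude via the spectral theorem, which is equivalent.
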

\begin{proof}
If $q_1|_O\not\equiv  q_2|_O$ then there exists a subset $B\subseteq O$ with positive measure, and a constant
$c>0$ such that $q_2(x)-q_1(x)\geq c$ for all $x\in B$ (a.e.). 
From the monotonicity inequality in theorem \ref{thm:monotonicity} we have that $\Lambda(q_2) - \Lambda(q_1) \geq_\text{fin} A$, where 
\begin{align*}
A: L^2(\Sigma) \to L^2(\Sigma), \ \ \int_{\Sigma} h Ag \dx[s] = \int_\Omega  k^2 (q_2-q_1) u^{(g)}_1 u^{(h)}_1 \dx.
\end{align*}
Note that $A = S_{1}^* j^* k^2 M_{q_1-q_2} j S_{1}$ where $S_1: g \mapsto u_1^{(g)}$ is the solution operator and $j: H^1(\Omega) \to L^2(\Omega)$ is the compact inclusion, so $A$ is indeed a compact, self-adjoint linear operator on $L^2(\Sigma)$.

We will now prove the assertion by contradiction and assume that $\Lambda(q_2)-\Lambda(q_1)\leq_\text{fin} 0$.
Then, the transitivity result in lemma~\ref{lemma:order_transitive} gives that $A \leq_{\text{fin}} 0$. By the characterization in corollary \ref{cor_order_characterization}, there would exist a finite dimensional subspace $V\subseteq L^2(\partial \Omega)$,
with 
\begin{align*}
0 & \geq  \int_\Omega  k^2 (q_2-q_1) |u^{(g)}_1|^2 \dx
= \int_O  k^2 (q_2-q_1) |u^{(g)}_1|^2 \dx + \int_{\Omega\setminus O}  k^2 (q_2-q_1) |u^{(g)}_1|^2 \dx\\
& \geq c \int_B k^2 |u^{(g)}_1|^2 \dx - C  \int_{\Omega\setminus O}  k^2 |u^{(g)}_1|^2 \dx
\end{align*}
for all $g\in V^\perp$, where $C:=\left( \norm{q_1}_{L^\infty(\Omega)} + \norm{q_2}_{L^\infty(\Omega)} \right)$
and $u^{(g)}_1$ solves \req{Uniqueness:Helmholtz} with $q=q_1$.

However, using the localized potentials from theorem \ref{thm:localized_potentials} with $D:=\overline \Omega\setminus O$, there must exist a Neumann datum $g\in V^\perp$ with 
\[
c \int_B k^2 |u^{(g)}_1|^2 \dx > C \int_{\Omega\setminus O}  k^2 |u^{(g)}_1|^2 \dx,
\]
which contradicts the above inequality. Hence, $\Lambda(q_2)-\Lambda(q_1)$ must have infinitely many negative eigenvalues, and in particular 
$\Lambda(q_2)\neq \Lambda(q_1)$. 
\end{proof}

\emph{Proof of theorem \ref{intro:thm:local_uniqueness}.} The result is an
immediate consequence of theorem \ref{thm:local}. \qquad $\Box$

Theorem~\ref{thm:local} shows that two scattering coefficient functions can be distinguished from knowledge of 
the partial boundary measurements if their difference is of definite sign in a neighborhood of $\Sigma$ (or any open subset of $\Sigma$ since $\Lambda(\Sigma)$ determines the boundary measurements on all smaller parts).
This definite sign condition is fulfilled for piecewise-analytic functions, cf., e.g., \cite[Thm.\ A.1]{harrach2013monotonicity}, but the authors are not aware of other 
named function spaces, with less regularity, where infinite oscillations between positive and negative values when approaching the boundary can be ruled out. In the following corollary the term piecewise-analytic is understood with respect to
a partition in finitely many subdomains with piecewise $C^\infty$-boundaries, cf.\  \cite{harrach2013monotonicity}
for a precise definition.

\begin{corollary}\label{cor:pcw_anal_uniqueness}
If $q_1-q_2$ is piecewise-analytic on $\Omega$ then 
\[
\Lambda(q_1)=\Lambda(q_2)\quad \text{ if and only if } \quad q_1=q_2.
\]
\end{corollary}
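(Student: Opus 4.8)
The plan is to deduce the corollary from Theorem~\ref{thm:local} together with the standard fact that a piecewise-analytic function which is not identically zero has a fixed sign on some relatively open set meeting $\Sigma$. The ``if'' direction is immediate: if $q_1=q_2$ then the boundary value problems \req{Uniqueness:Helmholtz} coincide, hence $\Lambda(q_1)=\Lambda(q_2)$. For the ``only if'' direction I would argue by contraposition, assuming $q_1\neq q_2$ and exhibiting a direction in which $\Lambda(q_1)$ and $\Lambda(q_2)$ disagree.

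First I would record the key geometric fact about the difference $f:=q_1-q_2$. Since $f$ is piecewise-analytic and $f\not\equiv 0$, I claim there is a relatively open set $O\subseteq\overline\Omega$ connected to $\Sigma$ on which $f$ has a fixed sign and $f\not\equiv 0$. To see this I would start from the finitely many subdomains of the underlying partition, collect those subdomains touching $\Sigma$ on which $f\equiv 0$ into a relatively open region $U$ connected to $\Sigma$ where $q_1=q_2$, and then locate the first subdomain $\Omega_j$ adjacent to $U$ on which $f$ is analytic but $f\not\equiv 0$. By continuity $f$ vanishes on the interface $\Gamma_j=\partial\Omega_j\cap\overline U$, so writing $f$ in boundary-normal coordinates near $\Gamma_j$ as $f=t^m g$ with $g$ analytic and $g\not\equiv 0$ on $\Gamma_j$, I would pick a point of $\Gamma_j$ where $g\neq 0$; for small normal distance $t>0$ the sign of $f$ is then constant there. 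Attaching this thin definite-sign slab to $U$ produces the desired $O$. This is precisely the property recorded in \cite[Thm.~A.1]{harrach2013monotonicity}, which I would ultimately just invoke.

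With such an $O$ in hand, after possibly interchanging the roles of $q_1$ and $q_2$ I may assume $q_1\leq q_2$ on $O$ with $q_1|_O\not\equiv q_2|_O$. Theorem~\ref{thm:local} then applies verbatim and gives $\Lambda(q_1)\neq\Lambda(q_2)$ (indeed $\Lambda(q_2)-\Lambda(q_1)$ has infinitely many positive eigenvalues), contradicting $\Lambda(q_1)=\Lambda(q_2)$. This completes the contrapositive and hence the corollary.

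I expect the only genuine difficulty to be the definite-sign claim for piecewise-analytic $f$, and in particular the bookkeeping needed to keep the constructed set $O$ simultaneously connected, meeting $\Sigma$, and of one sign in the case where the zero set of $f$ reaches all the way to $\Sigma$; once this is granted, the reduction to Theorem~\ref{thm:local} is immediate. All of the analytic machinery, namely monotonicity up to finitely many eigenvalues and the localized potentials, is already packaged inside Theorem~\ref{thm:local}, so no further PDE estimates are needed at this stage.
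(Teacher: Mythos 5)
Your proposal is correct and follows essentially the same route as the paper, whose entire proof is to invoke theorem~\ref{intro:thm:local_uniqueness} (equivalently theorem~\ref{thm:local}) together with \cite[Thm.~A.1]{harrach2013monotonicity} for the fact that a nonzero piecewise-analytic difference has a definite sign on a relatively open set connected to $\Sigma$. Your additional sketch of why that definite-sign property holds is a reasonable outline, but since you ultimately cite the same result, the argument coincides with the paper's.
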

\begin{proof}
This follows from theorem \ref{intro:thm:local_uniqueness} and \cite[Thm.\ A.1]{harrach2013monotonicity}.
\end{proof}

\section{Detecting the support of a scatterer}
\label{Sec:Detection}

We will now show that an unknown scatterer, where the refraction index is
either higher or lower than an otherwise homogeneous background value, can be reconstructed by simple monotonicity comparisons.

\subsection{Scatterer detection by monotonicity tests}

As before, let $\Omega\subset \R^{n}$, $n\geq 2$ be a bounded domain with Lipschitz boundary.
The domain is assumed to contain an open set (the scatterer) $D\subseteq \Omega$ with $\overline D\subset \Omega$ and connected complement $\Omega\setminus \overline{D}$. We assume that the scattering index fulfills $q(x)=1$ in $\Omega\setminus D$ (a.e.) and 
that there exist constants $q_\text{min}, q_\text{max}\in \R$ so that either
\[
1<  q_\text{min}\leq  q(x) \leq   q_\text{max} \quad \text{ for all } x\in D \text{ (a.e.)},
\]
or 
\[
q_\text{min}\leq  q(x) \leq  q_\text{max}<1 \quad \text{ for all } x\in D \text{ (a.e.)}.
\]
$\Lambda(q)$ denotes the Neumann-to-Dirichlet operator for the domain
containing the scatterer, and $\Lambda(1)$ is the Neumann-to-Dirichlet operator
for a homogeneous domain with $q\equiv 1$. For both cases, we assume that $k>0$
is not a resonance.

For an open set $B\subseteq \Omega$ (e.g., a small ball), we define the operator
\[
T_B:\ L^2(\Sigma)\to L^2(\Sigma), \quad
\int_{\Sigma} g T_B h\dx[s]:=\int_B k^2 u^{(g)}_1 u^{(h)}_1\dx,
\]
where $u^{(g)}_1,u^{(h)}_1\in H^1(\Omega)$ solve \req{Helmholtz} with $q\equiv 1$ and
Neumann boundary data $g$, resp., $h$.
Obviously, $T_B$ is a compact self-adjoint linear operator. 

The following two theorems show that $D$ can be reconstructed by comparing $\Lambda(q)-\Lambda(1)$ with $T_B$ 
in the sense of the Loewner order up to finitely many eigenvalues introduced in subsection~\ref{subsect:Loewner_order}.

\begin{theorem}\label{thm:main1}
Let 
\[
1<  q_\text{min}\leq  q(x) \leq   q_\text{max} \quad \text{ for all } x\in D \text{ (a.e.)},
\]
and let $d( q_\text{max})$ be defined as in lemma \ref{lemma:d_q} (which also equals the number of Neumann eigenvalues of the 
Laplacian $\Delta$ that are larger than $-k^2 q_\text{max}$, cf.\ corollary \ref{corollary:d_vs_Neumann_EV_Laplacian}).
\begin{enumerate}[(a)]
\item If $B\subseteq D$ then 
\[
\alpha T_B \leq_{d(  q_\text{max})} \Lambda(q)-\Lambda(1)\quad 
\text{ for all } \alpha\leq  q_\text{min}-1.
\]
\item If $B\not \subseteq D$ then, for all $\alpha>0$,
    $\Lambda(q)-\Lambda(1)-\alpha T_B$ has infinitely many negative
eigenvalues.
\end{enumerate}
\end{theorem}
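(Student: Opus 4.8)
The plan is to prove the two parts separately: (a) follows quickly from the lower monotonicity estimate together with the positivity of $T_B$, while (b) requires a contradiction argument powered by localized potentials.

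For part (a), I would apply theorem \ref{thm:monotonicity} with $q_1=1$ and $q_2=q$, giving a subspace $V\subset L^2(\Sigma)$ with $\dim V\leq d(q)$ such that for all $g\in V^\perp$,
\[
\int_\Sigma g(\Lambda(q)-\Lambda(1))g\dx[s]\geq \int_\Omega k^2(q-1)|u^{(g)}_1|^2\dx.
\]
Since $q=1$ on $\Omega\setminus D$ and $q\geq q_\text{min}$ on $D$, the right-hand side equals $\int_D k^2(q-1)|u^{(g)}_1|^2\dx\geq (q_\text{min}-1)\int_D k^2|u^{(g)}_1|^2\dx$, and $B\subseteq D$ gives $\int_D k^2|u^{(g)}_1|^2\dx\geq \int_B k^2|u^{(g)}_1|^2\dx=\int_\Sigma g T_B g\dx[s]$. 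Using $\alpha\leq q_\text{min}-1$ together with the positive semidefiniteness of $T_B$ then yields $\langle(\Lambda(q)-\Lambda(1)-\alpha T_B)g,g\rangle\geq 0$ for all $g\in V^\perp$. As $q\leq q_\text{max}$ forces $\dim V\leq d(q)\leq d(q_\text{max})$ by lemma \ref{lemma_monotonicity_q}, corollary \ref{cor_order_characterization}(d) delivers $\alpha T_B\leq_{d(q_\text{max})}\Lambda(q)-\Lambda(1)$.

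For part (b) I would argue by contradiction, assuming that for some $\alpha>0$ the operator $\Lambda(q)-\Lambda(1)-\alpha T_B$ has only finitely many negative eigenvalues, i.e.\ $\alpha T_B\leq_\text{fin}\Lambda(q)-\Lambda(1)$. By corollary \ref{cor_order_characterization} there is a finite-dimensional subspace on whose orthogonal complement $\alpha\int_\Sigma g T_B g\dx[s]\leq \int_\Sigma g(\Lambda(q)-\Lambda(1))g\dx[s]$. Enlarging it to also absorb the subspace from remark \ref{remark:othermonotonicity} (the upper monotonicity estimate with $q_1=1$, $q_2=q$), I obtain a finite-dimensional $V$ with
\[
\alpha\int_B k^2|u^{(g)}_1|^2\dx\leq \int_\Omega k^2(q-1)|u^{(g)}_q|^2\dx\leq (q_\text{max}-1)\int_D k^2|u^{(g)}_q|^2\dx
\]
for all $g\in V^\perp$, again using that $q=1$ off $D$. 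Since $q=1$ outside $D$, theorem \ref{thm:useful} bounds $\int_D|u^{(g)}_q|^2\dx$ by a constant multiple of $\int_D|u^{(g)}_1|^2\dx$, so the estimate becomes $\alpha\int_B k^2|u^{(g)}_1|^2\dx\leq C'\int_D k^2|u^{(g)}_1|^2\dx$ for all $g\in V^\perp$. Because $B\not\subseteq D$ (with $B$ open and $\overline D\subset\Omega$) the set $B\setminus\overline D$ has positive measure, and $\overline\Omega\setminus\overline D$ is connected to $\Sigma$ since $\Omega\setminus\overline D$ is connected and contains $\Sigma$. Theorem \ref{thm:localized_potentials}, applied with the background $q=1$, the finite-dimensional subspace $V$, and the given sets $B$ and $D$, then produces $(g_j)\subset V^\perp$ with $\int_B k^2|u^{(g_j)}_1|^2\dx\to\infty$ and $\int_D k^2|u^{(g_j)}_1|^2\dx\to 0$, contradicting the displayed bound. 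Hence infinitely many negative eigenvalues must occur.

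The main obstacle I anticipate lies in part (b): cleanly chaining the upper monotonicity estimate from remark \ref{remark:othermonotonicity}, the energy comparison of theorem \ref{thm:useful}, and the localized potentials of theorem \ref{thm:localized_potentials}, all while keeping each excluded subspace finite-dimensional, and in verifying that $B\setminus\overline D$ genuinely has positive measure so that theorem \ref{thm:localized_potentials} is applicable. By contrast, part (a) is essentially bookkeeping once the monotonicity estimate and the codimension bound $d(q)\leq d(q_\text{max})$ are in hand.
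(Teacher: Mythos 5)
Your proposal is correct and follows essentially the same route as the paper's proof: part (a) from theorem \ref{thm:monotonicity} together with $d(q)\leq d(q_\text{max})$ and corollary \ref{cor_order_characterization}, and part (b) by contradiction, chaining remark \ref{remark:othermonotonicity}, theorem \ref{thm:useful}, the transitivity of $\leq_\text{fin}$ from lemma \ref{lemma:order_transitive}, and the localized potentials of theorem \ref{thm:localized_potentials}. The only cosmetic difference is that in (a) the paper uses the pointwise bound $q-1\geq \alpha\chi_B$ instead of your explicit appeal to $T_B\geq 0$, and note that your parenthetical claim that $B$ open and $B\not\subseteq D$ force $B\setminus\overline{D}$ to have positive measure is exactly the (harmless, a.e.-interpretation) convention the paper itself adopts silently rather than a consequence of openness alone.
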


\begin{theorem}\label{thm:main2}
Let 
\[
q_\text{min}\leq  q(x) \leq  q_\text{max}<1 \quad \text{ for all } x\in D \text{ (a.e.)},
\]
and let $d(1)$ be defined as in lemma \ref{lemma:d_q} (which also equals the number of Neumann eigenvalues of the 
Laplacian $\Delta$ that are larger than $-k^2$, cf.\ corollary \ref{corollary:d_vs_Neumann_EV_Laplacian}).
\begin{enumerate}[(a)]
\item If $B\subseteq D$ then there exists $\alpha_\text{max}>0$ such that
\[
\alpha T_B \leq_{d(1)} \Lambda(1)-\Lambda(q)\quad \text{ for all } \alpha\leq \alpha_\text{max}.
\]
\item If $B\not \subseteq D$ then, for all $\alpha>0$, $\Lambda(1)-\Lambda(q) - \alpha T_B$ 
has infinitely many negative eigenvalues.
\end{enumerate}
\end{theorem}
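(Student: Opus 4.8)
The plan is to mirror the positive-contrast argument behind Theorem~\ref{thm:main1}, but with the roles of $\Lambda(1)$ and $\Lambda(q)$ exchanged to reflect that now $q\leq 1$, and with one additional ingredient. The test operator $T_B$ is built from the \emph{background} solutions $u^{(g)}_1$, whereas the natural lower bound for $\Lambda(1)-\Lambda(q)$ coming from monotonicity features the \emph{perturbed} solutions $u^{(g)}_q$; I will therefore invoke Theorem~\ref{thm:useful} to compare the two. Throughout write $M_\alpha:=\Lambda(1)-\Lambda(q)-\alpha T_B$ and recall that $\langle g,T_Bg\rangle=\int_B k^2|u^{(g)}_1|^2\dx\geq 0$, so $T_B\geq 0$.

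For part (a), assume $B\subseteq D$. Applying Theorem~\ref{thm:monotonicity} with $q_1=q$ and $q_2=1$ gives a subspace $V\subset L^2(\Sigma)$ with $\dim V\leq d(1)$ such that
\[
\int_\Sigma g\,(\Lambda(1)-\Lambda(q))\,g\dx[s]\geq \int_\Omega k^2(1-q)\,|u^{(g)}_q|^2\dx \qquad \text{for all } g\in V^\perp .
\]
Since $1-q=0$ outside $D$ and $1-q\geq 1-q_\text{max}>0$ on $D$, the right-hand side is at least $(1-q_\text{max})\int_D k^2|u^{(g)}_q|^2\dx$. Because $q\equiv 1$ outside $D$, Theorem~\ref{thm:useful} (with $q_1=1$, $q_2=q$) furnishes a constant $c_1>0$ with $\int_D|u^{(g)}_q|^2\dx\geq c_1\int_D|u^{(g)}_1|^2\dx\geq c_1\int_B|u^{(g)}_1|^2\dx$, the last inequality using $B\subseteq D$. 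Hence, setting $\alpha_\text{max}:=(1-q_\text{max})c_1>0$, one gets
\[
\int_\Sigma g\,(\Lambda(1)-\Lambda(q))\,g\dx[s]\geq \alpha_\text{max}\,\langle g,T_Bg\rangle\geq \alpha\,\langle g,T_Bg\rangle
\]
for all $g\in V^\perp$ and all $0<\alpha\leq\alpha_\text{max}$, where the last step uses $T_B\geq 0$. Corollary~\ref{cor_order_characterization}(d) with this $V$ then yields $\alpha T_B\leq_{d(1)}\Lambda(1)-\Lambda(q)$, as claimed.

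For part (b), assume $B\not\subseteq D$; by the openness of $B$ and the geometric assumptions on the scatterer, $B\setminus\overline D$ has positive measure. Remark~\ref{remark:othermonotonicity} (again $q_1=q$, $q_2=1$) provides a finite-dimensional $V_0\subset L^2(\Sigma)$ with
\[
\int_\Sigma g\,(\Lambda(1)-\Lambda(q))\,g\dx[s]\leq \int_\Omega k^2(1-q)\,|u^{(g)}_1|^2\dx\leq (1-q_\text{min})\int_D k^2|u^{(g)}_1|^2\dx \qquad \text{for } g\in V_0^\perp .
\]
Now fix $\alpha>0$ and an arbitrary finite-dimensional $W\subset L^2(\Sigma)$. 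Applying Theorem~\ref{thm:localized_potentials} with $B\setminus\overline D$ in place of $B$, with $\overline D$ in place of $D$, and excluding the subspace $V_0+W$, I obtain a sequence $(g_j)\subset (V_0+W)^\perp$ with $\int_{B\setminus\overline D}k^2|u^{(g_j)}_1|^2\dx\to\infty$ and $\int_{\overline D}k^2|u^{(g_j)}_1|^2\dx\to 0$. Since $g_j\in V_0^\perp$ and $\int_B\geq\int_{B\setminus\overline D}$, the upper bound gives
\[
\int_\Sigma g_j M_\alpha g_j\dx[s]\leq (1-q_\text{min})\int_D k^2|u^{(g_j)}_1|^2\dx-\alpha\int_{B\setminus\overline D}k^2|u^{(g_j)}_1|^2\dx\to -\infty ,
\]
so some $g_j\in W^\perp$ satisfies $\langle g_j,M_\alpha g_j\rangle<0$. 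As $W$ was arbitrary, there is no finite-dimensional $W$ with $\langle g,M_\alpha g\rangle\geq 0$ for all $g\in W^\perp$; by Lemma~\ref{lemma_eigenvalues_quadform}(a) applied to $-M_\alpha$ with $r=0$, this means $M_\alpha$ has more than $d$ negative eigenvalues for every $d\in\N_0$, i.e.\ infinitely many.

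I expect the main obstacle to lie in part (a). In the positive-contrast case the monotonicity lower bound already involves the background solution $u^{(g)}_1$, giving the explicit threshold $\alpha\leq q_\text{min}-1$; here the bound produces $u^{(g)}_q$, and the comparison with $u^{(g)}_1$ on $B$ must route through the \emph{global}-on-$D$ estimate of Theorem~\ref{thm:useful} rather than a pointwise identity. This is exactly why one can only assert the \emph{existence} of a threshold $\alpha_\text{max}>0$ and not an explicit value. A secondary point is the bookkeeping in part (b): the localized-potential sequence must lie in $W^\perp$ while simultaneously respecting the exceptional subspace $V_0$ from the monotonicity estimate, which is handled cleanly by excluding $V_0+W$ in a single application of Theorem~\ref{thm:localized_potentials}.
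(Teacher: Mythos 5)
Your proof is correct and takes essentially the same route as the paper: in (a) you combine the monotonicity relation (your direct application of theorem~\ref{thm:monotonicity} with $(q_1,q_2)=(q,1)$ is literally the paper's use of remark~\ref{remark:othermonotonicity}) with theorem~\ref{thm:useful} to pass from $u_q^{(g)}$ to $u_1^{(g)}$, arriving at the same threshold $\alpha_\text{max}=c_1(1-q_\text{max})$, and in (b) you combine the reverse monotonicity bound with theorem~\ref{thm:localized_potentials} to force the quadratic form of $\Lambda(1)-\Lambda(q)-\alpha T_B$ negative on the complement of every finite-dimensional subspace. Your explicit bookkeeping via the combined subspace $V_0+W$ and lemma~\ref{lemma_eigenvalues_quadform} is just an unpacked version of the paper's appeal to transitivity of $\leq_\text{fin}$ in lemma~\ref{lemma:order_transitive}.
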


\subsection{Proof of theorem \ref{thm:main1} and \ref{thm:main2}}

We prove both results by combining the monotonicity relations and localized
potentials results from the last subsections.

\emph{Proof of theorem \ref{thm:main1}.}
By the monotonicity relation in theorem \ref{thm:monotonicity} there exists a
subspace $V\subset L^2(\Sigma)$ with
$\dim(V)\leq d( q)\leq d(q_\text{max})$ (cf.\ corollary~\ref{corollary:d_vs_Neumann_EV_Laplacian}) and
\[
\int_{\Sigma} g \left( \Lambda(q) - \Lambda(1)\right) g \dx[s] \geq  \int_\Omega  k^2 (q-1) |u^{(g)}_1|^2 \dx
\quad \text{ for all } g\in V^\perp.
\]
If $B\subseteq D$ and $\alpha\leq  q_\text{min}-1$, then $q-1\geq \alpha\chi_B$, so that for all $g\in L^2(\Sigma)$
\[
\int_\Omega  k^2 (q-1) |u^{(g)}_1|^2 \dx\geq \int_B  k^2 \alpha |u^{(g)}_1|^2 \dx = \alpha \int_{\Sigma} g T_B g.
\]
Hence, if $B\subseteq D$ and $\alpha\leq  q_\text{min}-1$, then 
\[
\int_{\Sigma} g \left( \Lambda(q) - \Lambda(1)\right) g \dx[s] \geq \alpha \int_{\Sigma} g T_B g 
\quad 
\text{ for all } g\in V^\perp
\]
which proves (a).

To prove (b) by contradiction, let $B\not\subseteq D$, $\alpha>0$, and assume that
\begin{equation}\labeq{main1_hilf1}
\Lambda(q)-\Lambda(1)\geq_\text{fin}\alpha T_B.
\end{equation}
Using the monotonicity relation in remark~\ref{remark:othermonotonicity} together with theorem~\ref{thm:useful}, there exists
a finite-dimensional subspace $V\subset L^2(\Sigma)$ and a constant $C>0$, so that for all $g\in V^\perp$
\begin{equation}\labeq{main1_hilf2}
\int_{\Sigma} g \left( \Lambda(q) - \Lambda(1)\right) g \dx[s] \leq
\int_D  k^2 (q-1) |u^{(g)}_q|^2 \dx
\leq C \int_D  k^2 (q-1) |u^{(g)}_1|^2 \dx.
\end{equation}

Combining \req{main1_hilf1} and \req{main1_hilf2} using the transitivity result from
lemma~\ref{lemma:order_transitive}, there exists a finite
dimensional subspace $\tilde V\subset L^2(\Sigma)$ with 
\[
\alpha \int_B k^2 |u^{(g)}_1|^2\dx 
\leq C \int_D  k^2 (q-1) |u^{(g)}_1|^2 \dx \quad \text{ for all } g\in \tilde V^\perp.
\]
However, this is contradicted by the localized potentials result in theorem~\ref{thm:localized_potentials}, which guarantees the existence of a
sequence $(g_j)_{j\in \N}\subset \tilde{V}^\perp$ with
\[
\int_{B} |u_{1}^{(g_j)}|^2 \dx \to \infty, \quad \text{ and } \quad \int_{D} |u_{1}^{(g_j)}|^2 \dx \to 0.  
\]
Hence, $\Lambda(q)-\Lambda(1)-\alpha T_B$ cannot have only finitely many negative
eigenvalues.
\hfill $\Box$

\emph{Proof of theorem \ref{thm:main2}.}
The proof is analogous to that of theorem \ref{thm:main1}. We state it for the sake of completeness.
Let
\[
q_\text{min}\leq  q(x) \leq  q_\text{max}<1 \quad \text{ for all } x\in D \text{ (a.e.)}
\]
If $B\subseteq D$, then by the monotonicity relation in remark~\ref{remark:othermonotonicity}, together with theorem~\ref{thm:useful}, we have that
\begin{align*}
\lefteqn{\int_{\Sigma} g \left( \Lambda(q) - \Lambda(1)\right) g \dx[s]}\\
& \leq_{d(1)}  \int_\Omega  k^2 (q-1) |u^{(g)}_q|^2 \dx
\leq - \int_D  k^2  (1-q_\text{max}) |u^{(g)}_q|^2 \dx\\
 &\leq -c (1-q_\text{max}) \int_D  k^2  |u^{(g)}_1|^2 \dx
  \leq -c (1-q_\text{max}) \int_B  k^2  |u^{(g)}_1|^2 \dx\\
&= -c (1-q_\text{max})\int_{\Sigma} g T_B g \dx[s],
\end{align*}
with a constant $c>0$ from theorem~\ref{thm:useful}. This shows that $B\subseteq D$ implies 
\[
 \alpha T_B \leq_{d(1)}  \Lambda(1) - \Lambda(q) \quad \text{ for all } \alpha\leq c (1-q_\text{max})=:\alpha_\text{max},
\]
so that (a) is proven.

To prove (b) by contradiction, let $B\not\subseteq D$, $\alpha>0$, and assume that
\begin{equation}\labeq{main2_hilf1}
\Lambda(1)-\Lambda(q) \geq_\text{fin} \alpha T_B.
\end{equation}
By the monotonicity relation in theorem \ref{thm:monotonicity}, we have that
\begin{equation}\labeq{main2_hilf2}
\int_{\Sigma} g \left( \Lambda(1) - \Lambda(q)\right) g \dx[s] \leq_\text{fin}
\int_D  k^2 (1-q) |u^{(g)}_1|^2 \dx.
\end{equation}
Combining \req{main2_hilf1} and \req{main2_hilf2} using the transitivity result from
lemma~\ref{lemma:order_transitive}, we have that 
\[
\alpha \int_B k^2 |u^{(g)}_1|^2\dx  \leq_\text{fin}  \int_D  k^2 (1-q) |u^{(g)}_1|^2 \dx.
\]
However, this is contradicted by theorem~\ref{thm:localized_potentials}, which guarantees (for each finite-dimensional space $V\subset L^2(\Sigma)$) the existence of a sequence $(g_j)_{j\in \N}\subset V^\perp$ with
\[
\int_{B} |u_{0}^{(g_j)}|^2 \dx \to \infty, \quad \text{ and } \quad \int_{D} |u_{0}^{(g_j)}|^2 \dx \to 0.  
\]
Hence, $\Lambda(1)-\Lambda(q)-\alpha T_B$ cannot have only finitely many negative
eigenvalues, which shows (b).
\hfill $\Box$

\subsection{Remarks and extensions}
\label{Subsec:remarks}
We finish this section with some remarks on possible extensions of our results.
Theorem \ref{thm:main1} and \ref{thm:main2} hold with analogous proofs also for the case that the homogeneous background scattering index is replaced by a known inhomogeneous function $q_0\in L^\infty(\Omega)$. Using the concept of the inner and outer support from \cite{harrach2013monotonicity} (see also \cite{kusiak2003scattering,gebauer2008factorization,harrach2010exact} for the origins of this concept), we can also treat the case where $\Omega\setminus \overline{D}$ is not connected or 
where there is no clear jump of the scattering index. The monotonicity tests will then determine $D$ up to the difference of the inner and outer support. Moreover, the so-called indefinite case that the domain contains scatterers with higher, and scatterers with lower refractive index, can be treated by shrinking a large test region analogously to \cite{harrach2013monotonicity}, see also \cite{garde2017regularized}.

\bibliographystyle{alpha}
\bibliography{literaturliste}

\end{document}